\newcommand{\excise}[1]{}
\newcommand{\lyxdot}{.}
\newtheorem{thm}{\protect\theoremname}
\newtheorem{lem}[thm]{\protect\lemmaname}
\newtheorem{rem}[thm]{\protect\remarkname}
\newtheorem{cor}[thm]{\protect\corollaryname}
\newtheorem{defn}[thm]{\protect\definitionname}
\newtheorem{prop}[thm]{\protect\propositionname}
\providecommand{\corollaryname}{Corollary}
\providecommand{\definitionname}{Definition}
\providecommand{\examplename}{Example}
\providecommand{\lemmaname}{Lemma}
\providecommand{\propositionname}{Proposition}
\providecommand{\remarkname}{Remark}
\providecommand{\theoremname}{Theorem}
\begin{document}

\title{Ranking Perspective for Tree-based Methods with Applications to Symbolic Feature Selection}

\author{\name Hengrui Luo \email hrluo@rice.edu\\
		\addr   Lawrence Berkeley National Laboratory\\
		Berkeley, CA, 94720, USA \\
        Department of Statistics, Rice University\\
        Houston, TX, 77005, USA 
  \AND
     \name  Meng Li \email meng@rice.edu \\
       \addr Department of Statistics\\
       Department of Statistics, Rice University\\
       Houston, TX 77005, USA 
}

\editor{}

\maketitle
\begin{abstract}
Tree-based methods are powerful nonparametric techniques in statistics and machine learning. However, their effectiveness, particularly in finite-sample settings, is not fully understood. Recent applications have revealed their surprising ability to distinguish transformations (which we call \textit{symbolic feature selection}) that remain obscure under current theoretical understanding. This work provides a finite-sample analysis of tree-based methods from a ranking perspective. We link oracle partitions in tree methods to response rankings at local splits, offering new insights into their finite-sample behavior in regression and feature selection tasks. Building on this local ranking perspective, we extend our analysis in two ways: (i) We examine the global ranking performance of individual trees and ensembles, including Classification and Regression Trees (CART) and Bayesian Additive Regression Trees (BART), providing finite-sample oracle bounds, ranking consistency, and posterior contraction results. (ii) Inspired by the ranking perspective, we propose concordant divergence statistics $\mathcal{T}_0$ to evaluate symbolic feature mappings and establish their properties. Numerical experiments demonstrate the competitive performance of these statistics in symbolic feature selection tasks compared to existing methods.

Keywords: Symbolic regressions, ranking models, Bayesian regression
trees. 
\end{abstract}

\section{\label{sec:Introduction}Introduction}

\subsection{Tree regressions}

Tree-based methods, including CART \citep{breiman2017classification, agarwal2022hierarchical}, Bayesian CART \citep{chipman1998bayesian}, and their ensembles
such as random forests \citep{hastie2009elements} and Bayesian additive regression trees (BART \citet{chipman_bart_2010}), are popular nonparametric techniques
in statistics and machine learning \citep{breiman2017classification,hastie2009elements}. These tree-based methods
are highly effective in practice, showing competitive performance
in wide-ranging tasks such as regression \citep{grinsztajn2022tree,hrluo_2022e,LHL2024},
classification \citep{zhu2020classification}, causal inference
\citep{hahn2020bayesian}, and feature selection \citep{bleich_variable_2014}. Their empirical
success has motivated a growing literature that aims to provide theoretical
guarantees \citep{linero2018bayesian,athey2019generalized,rovckova2020posterior,ronen2022mixing}.
However, much of this research relies on asymptotic analysis, and
finite-sample understanding, which directly addresses observed empirical
effectiveness, remains limited.

In addition, some of their recent applications in interpretable machine
learning show success beyond the reach of current theoretic understanding. In
particular, \citet{ye2021operator} has proposed the use of BART to achieve
effective and scalable regression tools for symbolic regression, a
rapidly developing field that seeks to identify the nonlinear dependence
between data from a given set of mathematical expressions \citep{makke2024interpretable}. The authors
show empirical evidence that BART is able to distinguish transformations
of the same active variable with a small sample size, which is a crucial
property to ensure accurate performance of a nonparametric variable
selection method in symbolic regression. The existing statistical
literature has a limited scope to decipher this property as the relevance
of transformations of the same active variable is invariant from a
traditional nonparametric variable selection perspective. 

In this article, we attempt to offer new insights from ranking perspective in the current paper and propose a new divergence based on this perspective. We address the challenges above by connecting tree-based
methods with ranking \citep{clemenccon2015treerank,clemenccon2019tree}, which is an underdeveloped perspective in the literature on feature selection. This connection is inherently within the finite-sample
regime. It leads to conceptual elucidation of a class of tree-based
methods, and additionally motivates new statistics that are particularly
useful for screening transformations in symbolic regression. Our finite-sample
connection is broadly applicable to Bayesian and non-Bayesian tree-based
methods. We also establish asymptotic theory by connecting BART with
ranking.

Following the background introduced in Section \ref{sec:Introduction}, the
rest of the paper is organized as follows: Section \ref{sec:Problem-setting} introduces the notations, concept of trees, and the principal decision ratios, around which we organized our discussions. Section \ref{sec:Local-ranking-at} analyzes the oracle partition, 
refines  per-node analysis along a single tree and discusses the
feature selection behavior when we only consider local splits on univariate
input variables. Section \ref{sec:Global-rankings-with} furthers our
discussions from the behavior of local splits to the behavior of global
regression using CART and BART with a focus on ranking. Section \ref{subsec:Concordant--Statistics} introduces
a novel divergence statistics $\mathcal{T}_{0}$ inspired by the local splits studied. 
Section \ref{sec:Experiments}
provides experimental evidence showing that our theoretical
analysis and $\mathcal{T}_{0}$ results match the BART symbolic feature selection
on synthetic datasets. Section \ref{sec:Future-work} concludes the paper and
outlines several directions for future research.

\subsection{Tree methods for variable selections}

Variable importance is crucial for identifying significant features.
Tree-based methods, such as CART and BART, have significantly advanced
variable and feature selection \citep{linero2018bayesian,breiman2017classification,bleich_variable_2014}.
They provide robust mechanisms for assessing variable importance and
operational selection. Random forests are also widely used for this purpose through various mechanisms.
A foundational work applied random forests to microarray data for
gene selection, demonstrating the method's accuracy in identifying
relevant genes, outperforming other techniques \citep{diaz2006gene}.

On one hand, random forests can produce variable importance indices for variable and model selections \citep{genuer2010variable}. For example, the ranger implementation optimized random forests for
high-dimensional data and large-scale applications \citep{wright2017ranger},
which provides measures of variable importance from the random forest.
These measures indicate the significance of each feature in predicting
the response variable. To address biases in random forests especially
with correlated predictors \citep{strobl2008conditional}, instead
of permuting values unconditionally, the values are permuted conditionally
based on the values of other variables. 

Operational selection can be achieved during dynamic growth of
the tree and involves actively choosing variables based on specific
criteria. For example, the Boruta method \citep{kursa2010feature} uses random
forests for the selection of all relevant characteristics by comparing the original attributes
with the randomized counterparts, ensuring that all relevant characteristics are retained.
On the other hand, BART captures the 
uncertainty in variable importance, leading to more accurate models,
particularly in gene regulation studies \citep{horiguchi2021assessing}
but also induces variable importance for selection \citep{bleich_variable_2014}.

\subsection{Tree-based rankings}

\citet{clemenccon2011adaptive} explored adaptive partitioning schemes
for bipartite ranking, showing how decision trees can create adaptive
partitions to improve ranking performance. These schemes dynamically
adjust tree partitions to better capture the distribution of positive
and negative classes. Adaptive partitioning uses decision trees to
refine partitions based on ranking performance feedback. Their TreeRank
method adapts the tree structure to split 
nodes to improve the AUC of ROC. It is useful for complex data distributions
that require flexible partitioning. Furthermore, TreeRank Tournament
algorithm \citep{clemenccon2015treerank} enhances this by integrating
multiple trees, stabilizing ranking performance, and extends its capability
in feature selection.

\citet{clemenccon2009partitioning} examined partitioning rules for
bipartite ranking, highlighting the role of decision tree methods in creating
effective partitions to approximate the optimal ROC curve. They showed
that tree-based partitioning rules could be optimized to improve ranking
by focusing on informative splits that maximize class separation.
CART variants and other decision tree methods are essential for statistical
ranking problems, including bipartite ranking \citep{menon2016bipartite,uematsu2017theoretically}.

Beyond bipartite ranking, a scoring function can be estimated from leaf nodes of a CART or other regression model \citep{cossock2006subset}.
When an instance is classified into a leaf node, the score of that
leaf node serves as the estimated score for the instance. This approach
transforms the tree model into a scoring function that can be used
for ranking purposes, which we will revisit in Section \ref{sec:Global-rankings-with}. 
\global\long\def\mY{\mathcal{Y}}%

\section{\label{sec:Problem-setting}Oracle Partitions}

\subsection{Notation}

We consider the regression problem and the following
data generating model: 
\begin{align}
y_{i} & =f(\bm{x}_{i})+\epsilon_{i},\quad i=1,2,\cdots,N,\label{eq:noisy score-1}\\
\epsilon_{i} & \sim N(0,\sigma^{2}_{\epsilon})\nonumber \\
\bm{z}_{i} & =\left(\theta_{1}\bm{x}_{i},\cdots,\theta_{q}\bm{x}_{i}\right)\in\mathbb{R}^{q},\label{eq:feature_def}
\end{align}
with continuous covariates variables $\bm{x}_{i}\in\mathbb{R}^{d}$
and their transformed symbolic features $\bm{z}_{i}\in\mathbb{R}^{q}$ under
a collection of feature mappings $\theta_{\bullet}:\mathbb{R}^{d}\rightarrow\mathbb{R}$.
This collection of feature mappings $\theta_{1},\cdots,\theta_{q}$
is usually constructed and selected by symbolic regressions (e.g.,
$\sin,\cos,\exp$), to approximate continuous responses $y_{i}\in\mathbb{R}$.
When $q=d$ and $\theta_{i}=\pi_{i}$ (i.e., projection onto the $i$-th
coordinate) \eqref{eq:noisy score-1} reduces to the usual regression
setting of $y_{i}=f(\bm{x}_{i})+\epsilon_{i}$. In other words, the
feature mappings $\theta_{1},\cdots,\theta_{q}$ allow us to consider
a more general regression setting by sending the data matrix into
the following feature matrix 
\begin{align*}
\left(\begin{array}{c}
\bm{x}_{1}\\
\vdots\\
\bm{x}_{N}
\end{array}\right)\in\mathbb{R}^{N\times d}\mapsto\left(\begin{array}{ccc}
\theta_{1}\bm{x}_{1}, & \cdots & ,\theta_{q}\bm{x}_{N}\\
\vdots &  & \vdots\\
\theta_{1}\bm{x}_{N}, & \cdots & ,\theta_{q}\bm{x}_{N}
\end{array}\right)=\left(\begin{array}{c}
\bm{z}_{1}\\
\vdots\\
\bm{z}_{N}
\end{array}\right) & \in\mathbb{R}^{N\times q}.
\end{align*}
We shall distinguish $\bm{x}_i$ and $\bm{z}_i$ by referring to $\bm{x}_i$ as \textit{inputs} and $\bm{z}_i$ as \textit{features} or \textit{predictors}. The population counterpart of the model above omits the index $i$ in the notation when we do not need to refer to each sample. 
\begin{figure}[t]
\centering

\includegraphics[width=0.95\textwidth]{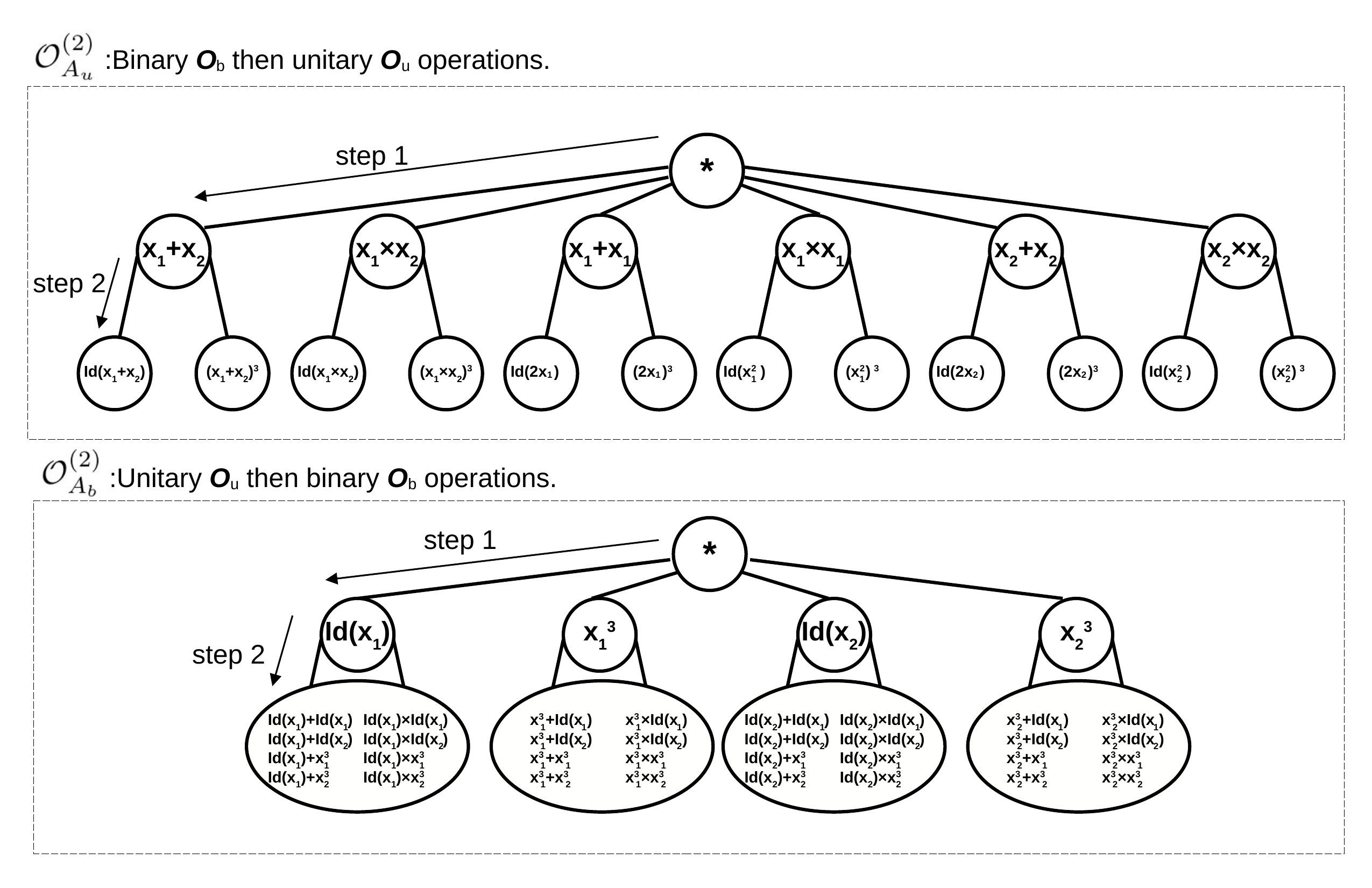}

\caption{\label{fig:1layer} We illustrate 2-layer symbolic regression with
$\mathcal{O}_{u}=\{id,x^{3}\}$ and $\mathcal{O}_{b}=\{+,\times\}$.
We also follow the notation convention $\mathcal{O}_{A_{u}}^{(2)}$
and $\mathcal{O}_{A_{b}}^{(2)}$ for the architectures specified in
\citet{ye2021operator}. We displayed all of the possible features in a 2-step symbolic composition using tree structure, showing the rapidly increasing number $q$ of features, namely transformed symbolic feature $\bm{z}$'s.}
\end{figure}
\begin{example}
\label{exa:(Symbolic-feature-mappings)}(Symbolic feature mappings)
Consider $d=2$ and input variable $\bm{x}=({x}_{1},{x}_{2})\in\mathbb{R}^{2}$
and an operator set $\mathcal{O}_b=\{+,\times\}$, in one step composition there
are $q=2\times2\times2=8$ features (i.e., combinations
$\left\{ {x}_{1},{x}_{2} \right\} \otimes\{+,\times\}\otimes\left\{ {x}_{1},{x}_{2}\right\} $)
${z}_{1}=2{x}_{1},{z}_{2}={x}_{1}+{x}_{2},\cdots,{z}_{8}={x}_{2}^{2}$ (but only 6 distinct features as shown in Figure \ref{fig:1layer});
suppose the next step we have an operator set $\mathcal{O}_u=\{id,x^3\}$
then in one step composition there are $q=6\times2=12$ different
features. Even with only two composition steps, the dimensionality
$q$ of features grows quickly \citep{bryant1992symbolic}. Due to
repetitive use of this composition construction in symbolic regression, $q\gg d$ and these $q$ features are usually highly
correlated, making feature (pre-)selection necessary in symbolic regressions. 

In \citet{ye2021operator}, BART is applied to perform feature selection among all these symbolic features (which are highly correlated, and identical if noiseless composition is assumed) and achieve good performance from the corresponding symbolic regression model using the chosen features. If we take different orders of compositions (i.e., taking unitary operation first, denoted as $\mathcal{O}_{A_u}^{(2)}$; or taking binary operation first, denoted as $\mathcal{O}_{A_b}^{(2)}$), the number of features will change.
\end{example}

Although we assume additive noises $\epsilon_{i}$ such that $\mathbb{E}\epsilon_{i}=0$
and $\text{Var}(\epsilon_{i})=\sigma_{\epsilon}^{2}>0$, the ranking
perspective studied in this article also covers the noise-free setting
with $\sigma_{\epsilon}^{2}=0$. We denote the pairs of observations
as 
\global\long\def\mXN{\mathcal{X}^{(N)}}%
\global\long\def\mYN{\mathcal{Y}^{(N)}}%
\global\long\def\mDN{\mathcal{D}^{(N)}}%
\global\long\def\mT{\mathcal{T}}%
\begin{align*}
\mXN & =\{\bm{x}_{1},\bm{x}_{2},\cdots,\bm{x}_{N}\}=\left\{ \left(\bm{x}_{1,1},\cdots,\bm{x}_{1,d}\right),\left(\bm{x}_{2,1},\cdots,\bm{x}_{2,d}\right),\cdots,\left(\bm{x}_{N,1},\cdots,\bm{x}_{N,d}\right)\right\} ,\\
\mathcal{Z}^{(N)} & =\{\bm{z}_{1},\bm{z}_{2},\cdots,\bm{z}_{N}\},\quad \mYN=\{y_{1},y_{2},\cdots,y_{N}\}.
\end{align*}
We use an enclosing round bracket when the observation pairs are sorted
according to the ranks of response $y$'s, i.e., 
\begin{align}
\left(\bm{z}_{(1)},y_{(1)}\right),\left(\bm{z}_{(2)},y_{(2)}\right),\cdots,\left(\bm{z}_{(N)},y_{(N)}\right),\label{eq:y_rank convention}\\
\text{ or }\left(\bm{x}_{(1)},y_{(1)}\right),\left(\bm{x}_{(2)},y_{(2)}\right),\cdots,\left(\bm{x}_{(N)},y_{(N)}\right), 
\end{align}
where $y_{(1)}<y_{(2)}<\cdots<y_{(N)};$ here $\bm{x}_{(i)}=\left(\bm{x}_{(i),1},\bm{x}_{(i),2},\cdots,\bm{x}_{(i),d}\right)$
when written in the form of each coordinate, and likewise $\bm{z}_{(i)}=\left(\bm{z}_{(i),1},\bm{z}_{(i),2},\cdots,\bm{z}_{(i),q}\right)$. %
We assume that there are no ties among the responses throughout the paper. 

\subsection{\label{subsec:Splittings in Tree-based Models}Recursive partition
in tree-based models}

A binary tree divides the predictor space and consists
of internal nodes and leaf nodes. The leaf nodes form a partition
of the predictor space; for a tree $\mathcal{T}$ consisting of $K$
leaf nodes, the conditional mean of the response $\mathbb{E}(y\mid\bm{x})=g(\bm{x})=\sum_{i=1}^{K}\mu_{i}\bm{1}\left(\left.\bm{x}\in P_{i}\right|\mathcal{T}\right),$
where $\mu_{i}\in\mathbb{R}$ is the mean value associated with the
$i$th leaf node $P_{i}$ for $i=1,\ldots,K$. 

To recursively construct binary
trees \citep{quinlan1986induction,breiman2017classification,hastie2009elements}, we can split at each node by splitting coordinates and splitting
values (a.k.a., cutoff values); in particular, for an internal node $\eta$ with split parameters
$(C_{\eta},k_{\eta})$, we divide $\eta$ into the left and right
child nodes $
\bm{x}_{\bullet,k_{\eta}}\leq C_{\eta}$ and $\bm{x}_{\bullet,k_{\eta}}>C_{\eta},$ respectively,
depending on whether the $k_{\eta}$-th coordinate is greater
than the splitting value $C_{\eta}$. With respect to every node $\eta\in\mathcal{T}$ in a tree structure, we adopt the notation $i\in\eta$ to indicate that the $i$-th sample $(\bm{x}_i,y_i)$ or $(\bm{z}_i,y_i)$ is assigned to the node $\eta$. 

\global\long\def\bx{\bm{x}}%
The recursive partition
induced by a tree method is dictated by interpretable decision rules including
the choice of splitting coordinates $k_{\eta}$ and splitting values
$C_{\eta}$. For any internal node $\eta$, let $n_{\eta}$ be its
size (i.e., the number of samples that fall into this node) and $\{y_{i}:1\leq i\leq n_{\eta}\}$
be the responses contained in $\eta$. CART selects $(C_{\eta},k_{\eta})$
by 
\begin{equation}
(C_{\eta},k_{\eta})=\underset{C,k}{\arg\min}\sum_{{i}\in\eta}(y_{i}-\mu_{L}^{C,k})^{2}\bm{1}(\bm{x}_{i,k}\leq C)+\sum_{{i}\in\eta}(y_{i}-\mu_{R}^{C,k})^{2}\bm{1}(\bm{x}_{i,k}>C),\label{eq:loss.cart}
\end{equation}
where $(\mu_{L}^{C,k},\mu_{R}^{C,k})$ are the sample average
of responses within the left and right nodes that minimizes in-node
sum of squares, i.e., 
\begin{equation}
\mu_{L}^{C,k}\coloneqq\frac{\sum_{i\in\eta}y_{i}\cdot\bm{1}\left(\bm{x}_{i,k}\leq C\right)}{\sum_{{i}\in\eta}\bm{1}\left(\bm{x}_{i,k}\leq C\right)},\quad\mu_{R}^{C,k}\coloneqq\frac{\sum_{{i}\in\eta}y_{i}\cdot\bm{1}\left(\bm{x}_{i,k}>C\right)}{\sum_{{i}\in\eta}\bm{1}\left(\bm{x}_{i,k}>C\right)}.\label{eq:mean_R_star-1}
\end{equation}
Cycling through all internal nodes induces a recursive partition defined
by the leaf nodes. In the existing tree literature \citep{quinlan1986induction,breiman2017classification,hastie2009elements},
this partition is viewed as one of the predictor space, which indeed
defines basis functions. These basis functions are used to approximate the regression function $f$ in \eqref{eq:noisy score-1}.

We take a slightly different perspective to view this partition as one that is induced by the observed
samples, noting that the splitting action separates all observations
$\mDN$ into two groups corresponding to the left and right children nodes.

Conceptually, the basis function view is at the population level operated
on the support of $f$, while our analysis is inherently finite-sample and it 
operated on the indices of observations $[N]=\{1,\ldots,N\}$. In
particular, each internal node $\eta$ collects a subset of observations
and can be characterized by a subset of $[N]$, i.e., $\eta\subset[N]$,
and the left and right child nodes of $\eta$ lead to a finer partition
of $\eta$ by $\left\{ i:\bm{x}_{i,k_{\eta}}\leq C_{\eta},i\in\eta\subset[N]\right\} $,
and $\left\{ i:\bm{x}_{i,k_{\eta}}>C_{\eta},i\in\eta\subset[N]\right\} $.
As such, the recursive partition of the prediction space, once realized
by finite samples, leads to a recursive partition of $[N]$, which
subsequently yields a recursive partition of $\mXN$ and $\mYN$.
As we shall show later, this finite-sample perspective connects tree
methods with the ranking of $\mYN$.

In the symbolic regression setting, a tree method sees the transformed
features $\mathcal{Z}^{(N)}$ and $\mathcal{Y}^{(N)}$, and would proceed using the transformed symbolic features $\mathcal{Z}^{(N)}$ instead of $\mathcal{X}^{(N)}$ as predictor. We reiterate that these transformed features are of rapidly growing dimensionality and present high correlations, which require feature (pre-)selection as a necessary step in symbolic regression. 
Then fitting a regression tree using the transformed symbolic features, the criteria \eqref{eq:mean_R_star-1}  
contains a split along the feature space $\mathcal{Z}^{(N)}$: 
\begin{align}
\mu_{L}^{C,k} & =\frac{\sum_{{i}\in\eta}y_{i}\cdot\bm{1}\left(\bm{z}_{i,k}\leq C\right)}{\sum_{{i}\in\eta}\bm{1}\left(\bm{z}_{i,k}\leq C\right)}=\frac{\sum_{{i}\in\eta}y_{i}\cdot\bm{1}\left(\theta_{k}\bm{x}_{i}\leq C\right)}{\sum_{{i}\in\eta}\bm{1}\left(\theta_{k}\bm{x}_{i}\leq C\right)},\label{eq:trans1}\\
\mu_{R}^{C,k} & =\frac{\sum_{{i}\in\eta}y_{i}\cdot\bm{1}\left(\bm{z}_{i,k}>C\right)}{\sum_{{i}\in\eta}\bm{1}\left(\bm{z}_{i,k}>C\right)}=\frac{\sum_{{i}\in\eta}y_{i}\cdot\bm{1}\left(\theta_{k}\bm{x}_{i}\leq C\right)}{\sum_{{i}\in\eta}\bm{1}\left(\theta_{k}\bm{x}_{i}\leq C\right)}.\label{eq:trans2}
\end{align}
Here, we omit the subscript and it is clear that $C=C_{\eta}$ associated with the node $\eta$.
The first key observation from \eqref{eq:trans1} and \eqref{eq:trans2}
is that by definition \eqref{eq:feature_def} the $k$-th coordinate
of $\bm{z}_{i}$ is obtained by applying the feature mapping $\theta_{k}$
to $\bm{x}_{i}$. Therefore, the split along the feature space $\mathcal{Z}^{(N)}$
can be attained by splitting along the original space $\mathcal{X}^{(N)}$.
For example, if $\theta_{k}=\pi_{1}$, which projects onto the first
coordinate, then splitting on the $k$-th coordinate of $\bm{z}_{i}$
can be equivalently obtained from splitting along $\bm{x}_{i,1}$.
The second key observation is that regardless of the growing dimensionality
of $q$, the estimators \eqref{eq:trans1} and \eqref{eq:trans2}
only rely on one splitting coordinate, making it particularly suitable for the symbolic regession scenario where $q$ grows rapidly, as shown in Example \ref{exa:(Symbolic-feature-mappings)}.

In the rest of this paper,
we focus on this feature set $\mathcal{Z}^{(N)}$, which includes the original
data $\mathcal{X}^{(N)}$ as a special case where the transformation sets map $\bm{x}_i$ to each of its coordinates such that $\bm{z}_i = \bm{x}_i$. 

\subsection{Local splits and principal decision ratio}

In the sequel, we first analyze each internal node $\eta$ and suppose
$\eta$ contains $n_{\eta}$ observations indexed by $\{i{(\eta)}:i=1,\ldots,n_{\eta}\}$
using the original serial indices. For simplicity, we drop
the dependence on $\eta$ in the node-specific notation with the understanding that
our analysis is generally applicable to any internal node $\eta$.
For example, we will use $(n,C,k)$ for $(n_{\eta},C_{\eta},k_{\eta})$,
respectively, and
with a slight abuse of notation, use $[n]=\{1,\ldots,n\}$ to denote 
the indices of the enclosed observations $\{i{(\eta)}:i=1,\ldots,n_{\eta}\}$.

At the stage of deciding the split (consisting of the splitting coordinate
and splitting value), for any two pairs of decision rules $(k_{1},C_{1})$
and $(k_{2},C_{2})$, we introduce the \textit{principal decision
ratio}:  
\begin{align}
\tau & =\frac{\exp\left(-\sum_{i=1}^{n}(y_{i}-\mu_{L}^{1})^{2}\bm{1}(\bm{z}_{i,k_{1}}\leq C_{1})-\sum_{i=1}^{n}(y_{i}-\mu_{R}^{1})^{2}\bm{1}(\bm{z}_{i,k_{1}}>C_{1})\right)}{\exp\left(-\sum_{i=1}^{n}(y_{i}-\mu_{L}^{2})^{2}\bm{1}(\bm{z}_{i,k_{2}}\leq C_{2})-\sum_{i=1}^{n}(y_{i}-\mu_{R}^{2})^{2}\bm{1}(\bm{z}_{i,k_{2}}>C_{2})\right)},\label{eq:MH_ratio_serial}
\end{align}
where $\mu_{L}^{1}=\mu_{L}^{C_{1},k_{1}}$ (and $\mu_{R}^{1}=\mu_{R}^{C_{1},k_{1}}$),
$\mu_{L}^{2}=\mu_{L}^{C_{2},k_{2}}$ (and $\mu_{R}^{2}=\mu_{R}^{C_{2},k_{2}}$)
as specified in \eqref{eq:mean_R_star-1}. This means that we select splitting
values from one of the observed input features $\bm{z}_{i}$'s,
following the common practice in \citet{breiman2017classification}.

The numerator and denominator of \eqref{eq:MH_ratio_serial} exponentiate
the loss in Equation \eqref{eq:loss.cart}; this formulation assumes
a Gaussian likelihood with unit error standard deviation, which is commonly used in model-based tree methods such as Bayesian
CART \citep{chipman1998bayesian,chipman2002bayesian}. Consequently,
$\tau$ may represent a likelihood ratio. Based on $\tau$, the splitting
coordinate $k$ and splitting value $C$ that minimize the loss
function \eqref{eq:loss.cart} would have the highest $\tau$ value
compared to any other splitting rules.

The principal decision ratio in \eqref{eq:MH_ratio_serial} also encodes
key information for Bayesian trees to make splitting decisions. In
particular, the fitting procedure of Bayesian trees typically relies
on an adapted form of $\tau$. For Bayesian CART~\citep{chipman1998bayesian}
and its extension to sum-of-tree counterpart BART~\citep{chipman_bart_2010},
trees are sampled using a stochastic search via Markov chain Monte-Carlo
that utilizes a Metropolis-Hastings ratio between the original tree
and a proposed tree. And this ratio reduces to $\tau$ when comparing
two splitting decisions. Similarly, Bayesian dyadic trees~\citep{li2021learning}
would draw posterior samples of partitioning directions with probabilities
determined by all pairs of $\tau$ when the splitting values are restricted
to halves of each coordinate. The exact posterior sampling procedures
in these Bayesian tree methods also include inference on other parameters,
such as the mean and stand derivation parameters at leaf nodes, and
rules to prune a complete tree; for stochastic search, the Metropolis-Hastings
ratio also involves other operations in addition to growing a tree
via splitting internal nodes, such as the swapping operation.

Central to our analysis of tree methods is the principal decision
ratio $\tau$, and its interplay with ranking and feature selection.
As explained, the principal decision ratio \eqref{eq:MH_ratio_serial} is closely related to the loss function and Metropolis-Hastings
ratio in a per-node strategy, which is advocated by \citet{andronescu2009decision}.
Also analyzing a per-node strategy, \citet{luo2021variable} observed
that splits using relevant predictors yield higher Metropolis-Hastings
ratios, indicating a preference for these predictors in the splitting
process. This occurrence of $\tau$, which we aim to substantiate
with our analysis, exemplifies the intricate connection between feature
selection and splitting decisions. Therefore, analyzing $\tau$ provides
insight into how tree-based methods behave at local splits, and these
per-node analyses findings will later be extended to the entire decision tree
and tree ensembles.

\section{\label{sec:Local-ranking-at}Local Ranking at Local Splits}

\global\long\def\mL{\mathcal{L}}%
In this section, we will establish the connection between the response
rankings (i.e., rankings of $\mathcal{Y}^{(N)}$) and the optimal principal decision ratios in a single tree.
This leads to two observations which we summarize here at a high level: 
\begin{enumerate}
\item The optimal partition of samples into children nodes depends only
on the rankings of $y$, which we called the \emph{oracle partition}.
This oracle partition is solely determined by the ranks (or orders) of response $y$, in the sequential minimizations
of the $L_{2}$ loss function~\eqref{eq:loss.cart} in splitting
parameters $C,k$ (where $C$ can only be selected from samples), when
the sizes of the nodes of the children are fixed. When children node sizes are not fixed,
the minimization of $L_{2}$ loss involves the relative magnitude
of the responses $y$. 
\item However, the partitions in tree-based methods are determined by the
actual configuration of predictors $\bm{z}$'s. This means that
even if the oracle partition is completely determined by the responses
$y$, we may only attain sub-optimal partitions in the form of $\{(\bm{z}_{i},y_{i})\mid\bm{z}_{i,k}\leq C\}$
and $\{(\bm{z}_{i},y_{i})\mid\bm{z}_{i,k}>C\}$ for some $k$ and $C$.
\end{enumerate}
We rewrite the loss function in \eqref{eq:loss.cart} in a more general
form as a function of partitions $P_{1},P_{2}$ 
\begin{equation}
\mL(P_{1},P_{2})=SS(P_{1})+SS(P_{2})=:\sum_{i:y_{i}\in P_{1}}(y_{i}-\mu_{1}^{*})^{2}+\sum_{i:y_{i}\in P_{2}}(y_{i}-\mu_{2}^{*})^{2},\label{eq:loss.y.ranking}
\end{equation}
where $(P_{1},P_{2})$ is a 2-partition of $\{y_{1},\ldots,y_{n}\}$,
and the internal sum of squares $SS(P)\coloneqq\sum_{i:y_{i}\in P}(y_{i}-\mu(P))^{2}$
with $\mu(P)=\frac{1}{|P|}\sum_{i:y_{i}\in P}y_{i}$. Then the loss
function in \eqref{eq:loss.cart} is $\mL(P_{1},P_{2})$ when $(P_{1},P_{2})$
are constrained to take the form $P_{1}=\{y_{i}:\bm{z}_{i,k}\leq C\}$
and $P_{2}=\{y_{i}:\bm{z}_{i,k}>C\}$, and $\mu_{1}^{*},\mu_{2}^{*}$ are the corresponding means of $y_i$'s associated with these two partitions. This function $\mL(P_{1},P_{2})$ is invariant
to the ordering of $(P_{1},P_{2})$, and its optimal solution
is expected to be unique only up to this ordering.

\subsection{\label{subsec:Response-rankings-with}Optimal 2-partition}

We next study the optimal partition without imposing the aforementioned predictor-dependent
constraints on the partition induced by a tree method. This study is applicable to any selected
coordinate $k\in[q]$. We begin with an illustrative example that
contains 5 observations where we assume the dimensionality $q$ of $\mathcal{Z}^{(N)}$ to be 1 for the ease of visualization in Figure \ref{fig:depth-2 regression tree-1} but the argument works regardless of $q$. 
\begin{example}
\label{exa:(5-sample-univariate-example)}(5-sample example oracle
2-partition with fixed partition sizes) Assume $n=5$ and consider
2-partitions $(P_{1},P_{2})$ with sizes $(2,3)$. We proceed with the 
explicit computation of the loss function~\eqref{eq:loss.y.ranking}.
First consider the 2-partition with $P_{1}=\{y_{(1)},y_{(3)}\}$ and
$P_{2}=\{y_{(2)},y_{(4)},y_{(5)}\}$: 
\begin{align*}
SS(P_{1}) & =\left(y_{(1)}-\frac{y_{(1)}+y_{(3)}}{2}\right)^{2}+\left(y_{(3)}-\frac{y_{(1)}+y_{(3)}}{2}\right)^{2}=\frac{1}{2}\left(y_{(1)}-y_{(3)}\right)^{2},
\end{align*}
\begin{align*}
SS(P_{2}) & =\left(y_{(2)}-\frac{y_{(2)}+y_{(4)}+y_{(5)}}{3}\right)^{2}+\left(y_{(4)}-\frac{y_{(2)}+y_{(4)}+y_{(5)}}{3}\right)^{2}+\left(y_{(5)}-\frac{y_{(2)}+y_{(4)}+y_{(5)}}{3}\right)^{2}\\
 & =\left(\frac{2y_{(2)}-y_{(4)}-y_{(5)}}{3}\right)^{2}+\left(\frac{-y_{(2)}+2y_{(4)}-y_{(5)}}{3}\right)^{2}+\left(\frac{-y_{(2)}-y_{(4)}+2y_{(5)}}{3}\right)^{2}\\
 & =\frac{1}{9}\left(6y_{(2)}^{2}+6y_{(4)}^{2}+6y_{(5)}^{2}-6y_{(2)}y_{(4)}-6y_{(4)}y_{(5)}-6y_{(2)}y_{(5)}\right).
\end{align*}
Similarly, for the 2-partition with $P_{1}=\{y_{(1)},y_{(2)}\}$ and
$P_{2}=\{y_{(3)},y_{(4)},y_{(5)}\}$, we have 
\begin{align*}
SS(P_{1})=\frac{1}{2}\left(y_{(1)}-y_{(2)}\right)^{2},\quad SS(P_{2})=\frac{1}{9}\left(6y_{(3)}^{2}+6y_{(4)}^{2}+6y_{(5)}^{2}-6y_{(3)}y_{(4)}-6y_{(4)}y_{(5)}-6y_{(3)}y_{(5)}\right).
\end{align*}
Taking the difference of these two sets of expressions (note that
$y_{(1)}<y_{(2)}<y_{(3)}<y_{(4)}<y_{(5)}$) shows that the change
of $SS(P_1)$ and $SS(P_2)$, respectively denoted by $\Delta_1$ and $\Delta_2$, are both greater than zero: 
\begin{align*}
\Delta_{1} &=  \frac{1}{2}\left(y_{(1)}-y_{(3)}\right)^{2}-\frac{1}{2}\left(y_{(1)}-y_{(2)}\right)^{2} = \frac{1}{2}\left(y_{(2)}-y_{(3)}\right)\left(y_{(1)}-y_{(3)}+y_{(1)}-y_{(2)}\right) > 0, \\
\Delta_{2} &=  \frac{1}{9}\left(6y_{(2)}^{2}+6y_{(4)}^{2}+6y_{(5)}^{2}-6y_{(2)}y_{(4)}-6y_{(4)}y_{(5)}-6y_{(2)}y_{(5)}\right)\\
 & \quad\quad -\frac{1}{9}\left(6y_{(3)}^{2}+6y_{(4)}^{2}+6y_{(5)}^{2}-6y_{(3)}y_{(4)}-6y_{(4)}y_{(5)}-6y_{(3)}y_{(5)}\right)\\
 & =\frac{1}{9}\left(6y_{(2)}^{2}-6y_{(3)}^{2}-6\left(y_{(2)}-y_{(3)}\right)y_{(4)}-6\left(y_{(2)}-y_{(3)}\right)y_{(5)}\right)\\
 & =\frac{2}{3} \left(y_{(2)}-y_{(3)}\right)\left(y_{(2)}+y_{(3)}-y_{(4)}-y_{(5)}\right)>0.
\end{align*}
Therefore, the second 2-partition exchanging $y_{(2)}$ and $y_{(3)}$
reduces the total sum $\mL(P_{1},P_{2})$ in~\eqref{eq:loss.y.ranking}. Using this argument repeatedly, we will arrive
at the conclusion that $P_{1}=\{y_{(1)},y_{(2)}\}$ and $P_{2}=\{y_{(3)},y_{(4)},y_{(5)}\}$
form a (locally) optimal 2-partition with sizes (2, 3). Similarly, another
(local) optimal 2-partition with sizes (2, 3) is $P_{1}=\{y_{(4)},y_{(5)}\}$
and $P_{2}=\{y_{(1)},y_{(2)},y_{(3)}\}$. From their expressions,
we can see that only the ranks and the magnitude of the difference
between responses affect the $\Delta_{1}$ and $\Delta_{2}.$

Like expressions \eqref{eq:trans1} and \eqref{eq:trans2}, the oracle
partition depends only on the response values $y$'s. The second column
in Figure \ref{fig:depth-2 regression tree-1-lossfunc} shows two
possible configurations based on the two datasets shown in the first
column of the same figure. In both rows (a) and (b), the x-axis is
the first element in the size 2 partition component $P_{1}$, y-axis is
the second element in size 2 partition component $P_{1}$, therefore
the figure is symmetric. The minimum is attained by $P_{1}^{*}=\{y_{(1)},y_{(2)}\}=\{y_{3},y_{4}\}$
and $P_{2}^{*}=\{y_{(3)},y_{(4)},y_{(5)}\}=\{y_{1},y_{2},y_{5}\}$,
or $P_{1}^{**}=\{y_{(4)},y_{(5)}\}=\{y_{2},y_{5}\}$ and $P_{2}^{**}=\{y_{(1)},y_{(2)},y_{(3)}\}=\{y_{3},y_{4},y_{1}\}$. 
\end{example}

\begin{figure}[t]
\centering
\begin{adjustbox}{center}
\includegraphics[width=1.15\textwidth]{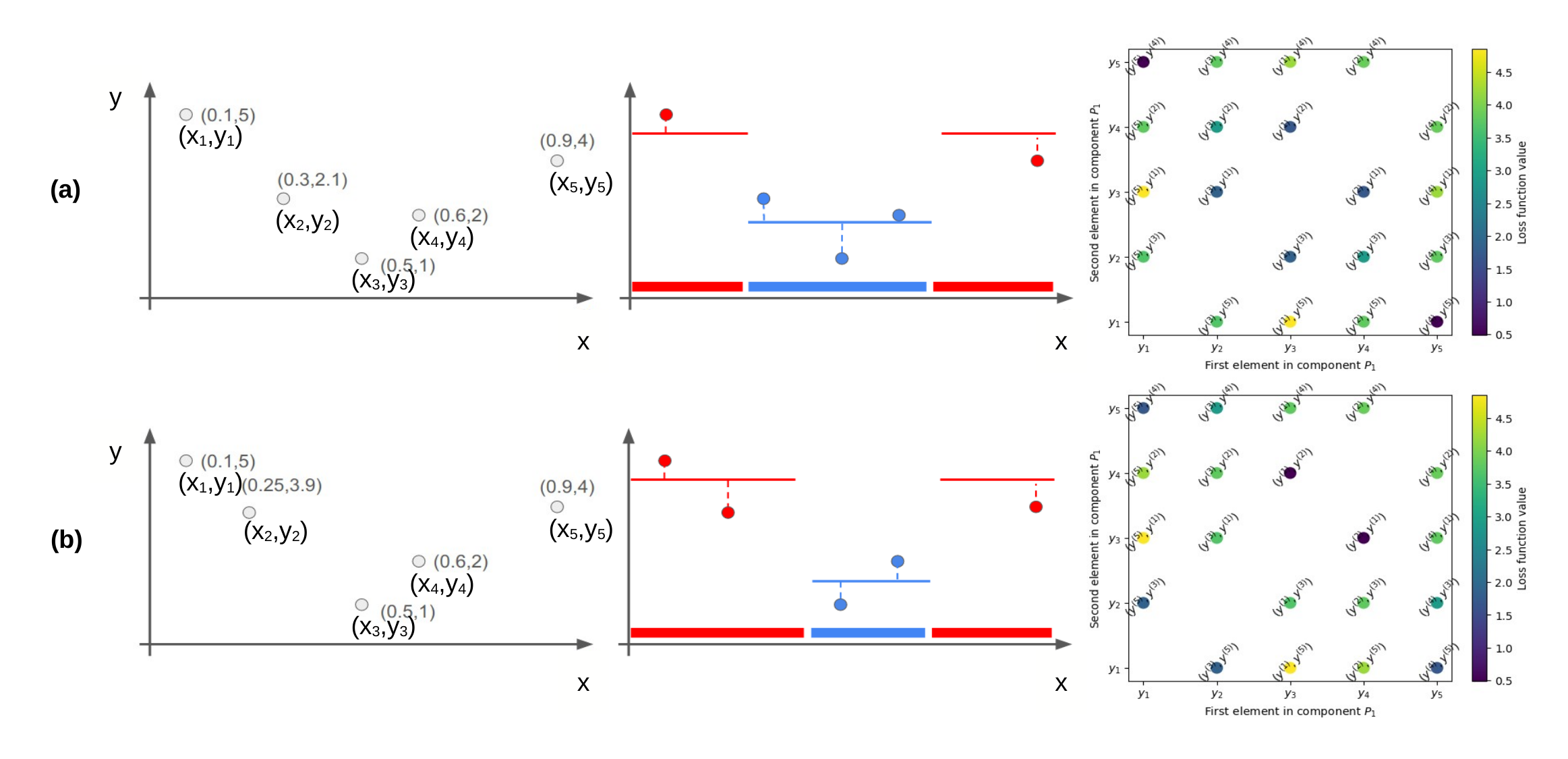}
\end{adjustbox}

\caption{\label{fig:depth-2 regression tree-1-lossfunc}\label{fig:depth-2 regression tree-1}A
depth 2 tree with 5 observations showing two possible oracle partitions
in Lemma \ref{lem:LemmaA}.\protect \protect \\
 In the first column, we present the raw $(x_{i},y_{i})$ pair of
dataset; In the second column, we present the oracle partition using
red and blue colors, and the support of indicator functions on the
$x$-axis. The horizontal solid lines represent the group mean of
$y$ values (as prediction value as well); the vertical dashed lines
represent the point-to-mean distances.\protect \protect \\
 In the third column, we illustrate the loss function \eqref{eq:loss.y.ranking}
The minimum in row (a) is attained by $\{y_{(4)},y_{(5)}\}=\{y_{1},y_{5}\}$
and $\{y_{(1)},y_{(2)},y_{(3)}\}=\{y_{2},y_{3},y_{4}\}$. The minimum
in orw (b) is attained by $\{y_{(3)},y_{(4)},y_{(5)}\}=\{y_{1},y_{2},y_{5}\}$
and $\{y_{(1)},y_{(2)}\}=\{y_{3},y_{4}\}$. We color the dots by the
actual loss function values, and annotate the ordered statistics near
each dot. }

\end{figure}

Example~\ref{exa:(5-sample-univariate-example)} serves as a representative
case for our result applicable to general sample sizes, as formalized
in the lemma below. 
\begin{lem}
\label{lem:LemmaA}(Oracle 2-partition with fixed sizes) For a 2-partition
of n elements $y_{(1)}<y_{(2)}<\cdots<y_{(n)}$ into components of
size $i$ and $n-i$, we assume that $n>4,\min(n-i,i)\geq2$ to ensure
variances are defined. Then the following partitions 
\begin{itemize}
\item $P_{1}^{*}=\{y_{(1)},y_{(2)},\cdots,y_{(i)}\}$ and $P_{2}^{*}=\{y_{(i+1)},y_{(i+2)},\cdots,y_{(n)}\}$
OR, 
\item $P_{1}^{**}=\{y_{(1)},y_{(2)},\cdots,y_{(n-i)}\}$ and $P_{2}^{**}=\{y_{(n-i+1)},y_{(i+2)},\cdots,y_{(n)}\}$ 
\end{itemize}
are the only 2-partitions of size $i$ and $n-i$ that minimize \eqref{eq:loss.y.ranking}. 
\end{lem}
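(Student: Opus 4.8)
The plan is to recast the minimization of $\mathcal{L}(P_1,P_2)$ as the maximization of the gap between the two group means, and then to show by a rearrangement (exchange) argument that any maximizer must separate the sorted responses into two contiguous blocks. First I would invoke the standard ANOVA decomposition: writing $\bar y=\frac1n\sum_{j=1}^n y_{(j)}$ and the total sum of squares $\mathrm{TSS}=\sum_{j=1}^n (y_{(j)}-\bar y)^2$, for any 2-partition $(P_1,P_2)$ with sizes $a=|P_1|$, $b=|P_2|=n-a$ and means $\mu_1=\mu(P_1),\mu_2=\mu(P_2)$ one has
\[
\mathrm{TSS}=\underbrace{SS(P_1)+SS(P_2)}_{=\,\mathcal{L}(P_1,P_2)}+\frac{ab}{n}\,(\mu_1-\mu_2)^2 .
\]
Since $\mathrm{TSS}$ is fixed and, once the sizes are constrained to $\{i,n-i\}$, the factor $ab/n=i(n-i)/n$ does not depend on which elements land in which block, minimizing $\mathcal{L}$ in \eqref{eq:loss.y.ranking} over partitions of sizes $i$ and $n-i$ is equivalent to maximizing $(\mu_1-\mu_2)^2$, i.e., maximizing the mean gap $|\mu_1-\mu_2|$.

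Next, the core step is an exchange argument on a maximizer of the gap. Consider any partition attaining the maximal gap and relabel the blocks so that $\mu_1\le\mu_2$. Suppose, for contradiction, that the separation is not contiguous, i.e., there exist $j\in P_1$ and $k\in P_2$ with $y_{(j)}>y_{(k)}$ (an inversion). Swapping these two elements keeps the block sizes fixed and changes the means by $\mu_1\mapsto\mu_1+\frac{y_{(k)}-y_{(j)}}{a}$ and $\mu_2\mapsto\mu_2+\frac{y_{(j)}-y_{(k)}}{b}$, so the gap increases by $(y_{(j)}-y_{(k)})\bigl(\tfrac1a+\tfrac1b\bigr)>0$, strictly, because the $y_{(j)}$ are distinct by assumption. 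This contradicts maximality. Hence at any maximizer no inversion exists, so every element of the lower-mean block is strictly smaller than every element of the other block; that is, the lower-mean block consists of the smallest $|P_1|$ responses. Since $|P_1|\in\{i,n-i\}$, there are exactly two such contiguous configurations: the block of the $i$ smallest values (giving $P_1^*,P_2^*$) or the block of the $n-i$ smallest values (giving $P_1^{**},P_2^{**}$).

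Finally, I would combine these observations: any minimizer of $\mathcal{L}$ must be one of the two threshold partitions $\{P_1^*,P_2^*\}$ or $\{P_1^{**},P_2^{**}\}$, and because the improvement from removing an inversion is strict, no non-contiguous partition can be a minimizer, which yields the ``only'' claim. This reproduces, at the level of single transpositions, the sign computations $\Delta_1,\Delta_2>0$ of Example~\ref{exa:(5-sample-univariate-example)}, now for arbitrary $n$ and block sizes. I expect the main difficulty to be bookkeeping rather than conceptual: one must (i) verify the between-group term $\frac{ab}{n}(\mu_1-\mu_2)^2$ together with its invariance once the sizes are fixed, and (ii) be careful that the two admissible configurations genuinely reduce to the stated $P^*$ and $P^{**}$ and that they need \emph{not} attain equal loss — for generic data only one is the global minimizer, with both tying only under symmetry — so the correct conclusion is that the set of minimizers is a nonempty subset of these two partitions.
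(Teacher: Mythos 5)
Your proof is correct, but it takes a genuinely different route from the paper's. You reduce the problem via the ANOVA identity $\mathrm{TSS}=\mathcal{L}(P_1,P_2)+\frac{ab}{n}(\mu_1-\mu_2)^2$, note that $ab=i(n-i)$ is invariant once the sizes are fixed, and thereby convert minimizing \eqref{eq:loss.y.ranking} into maximizing the squared mean gap; a single transposition of any inverted pair then increases the gap by $(y_{(j)}-y_{(k)})\bigl(\tfrac1a+\tfrac1b\bigr)>0$, so any minimizer must be one of the two contiguous (threshold) partitions. The paper instead works directly on the within-group sums of squares: it expands $SS(P_1')+SS(P_2')$ for a partition differing from the contiguous one by an adjacent swap of $y_{(i)}$ and $y_{(i+1)}$, tracks all cross terms, and shows the swap strictly reduces the total, then extends to general inversions by repeating the argument. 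Your route is shorter and cleaner: the exchange step is a one-line computation on means, it handles arbitrary (not just adjacent) inversions at once, and the strictness needed for the ``only'' claim is transparent from the distinctness of the $y$'s. What the paper's heavier bookkeeping buys is a set of explicit intermediate inequalities (its equations \eqref{eq:ref1}--\eqref{eq:ref4}) that are reused verbatim in the proof of Lemma \ref{lem:(Swaps-to-optimize} to compare the \emph{magnitudes} of different swaps, something your gap formula gives qualitatively (the gain is proportional to $y_{(j)}-y_{(k)}$) but that the paper develops in the exact form it later cites. Your closing caveat --- that the two threshold partitions need not tie, so the set of minimizers is a nonempty subset of the two --- is also the correct reading, and matches the paper's remark that one compares the loss at $(P_1^{*},P_2^{*})$ and $(P_1^{**},P_2^{**})$ to find the global minimum.
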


\begin{proof}
See Appendix \ref{sec:Proof-of-LemmaA}. 
\end{proof}
\begin{rem}
The lemma \ref{lem:LemmaA} states that the loss function only has two
local minima attained by $(P_{1}^{*},P_{2}^{*})$ (corresponding to
size $(i,n-i)$ ) or $(P_{1}^{**},P_{2}^{**})$. Comparing the loss
function value \eqref{eq:loss.y.ranking} at $(P_{1}^{*},P_{2}^{*})$
or $(P_{1}^{**},P_{2}^{**})$ gives the global minimum of the loss
function, hence determining the split maximizing \eqref{eq:MH_ratio_serial}.
This optimal 2-partition, however, may not be attained by those 2-partitions
induced by splitting on $z$ values only once. Lemma \ref{lem:LemmaA} is a refined version of optimal splits
in trees with continuous outcome under $L^{2}$ loss, which was studied
as a grouping problem instead of a ranking problem in \citet{fisher1958grouping}. 

We next present a simple
example to demonstrate Lemma \ref{lem:LemmaA} and the induced partitions
on the response and input coordinates. 
\end{rem}

\begin{example}
\label{exa:(5-sample-univariate-example)-1}(5-sample oracle 2-partition
with varying partition sizes) Now we consider the same dataset as
in Example \ref{exa:(5-sample-univariate-example)} but we do not
fix the partition sizes to $(2,3)$ any more.

In the third column of Figure \ref{fig:depth-2 regression tree-1},
we show the oracle 2-partitions on the response $y$ and the corresponding
landscape of the loss function \eqref{eq:loss.y.ranking}. In panel (a),
we put $(x_{1},y_{1})=(0.1,5)$, $(x_{2},y_{2})=(0.3,2.1)$, $(x_{3},y_{3})=(0.5,1)$,
$(x_{4},y_{4})=(0.6,2)$, $(x_{5},x_{5})=(0.9,4)$, and the oracle
partition that minimizes the total sum is $\{y_{(1)},y_{(2)},y_{(3)}\}$
of size 3 and $\{y_{(4)},y_{(5)}\}$ of size 2. In panel (b), we put
$(x_{2},y_{2})=(0.25,3.9)$, but the rest of the pairs remain the
same, and the oracle partition that minimizes the total sum is $\{y_{(1)},y_{(2)}\}$
of size 2 and $\{y_{(3)},y_{(4)},y_{(5)}\}$ of size 3. It turns out
that the oracle 2-partition of size $(2,3)$ is indeed the configuration
that minimizes the \eqref{eq:loss.y.ranking}, compared to oracle
2-partitions of size $(1,4)$. 
\end{example}

Note that if we observe $(z_{1},y_{1}),\cdots,(z_{5},y_{5})$ instead: since the oracle partition for the loss function depends only on the response $y$'s, this does not affect our oracle partition above.
However, it is clear from Figure \ref{fig:depth-2 regression tree-1}
that each of the two candidate optimal partitions (in the sense that they
minimize \eqref{eq:loss.y.ranking}) $P_{1}^{*},P_{2}^{*}$ or $P_{1}^{**},P_{2}^{**}$
creates 3 partition components on the $\mathcal{X}$ domain, which
cannot be attained by splitting on $x$ values only once. If we choose the transformed symbolic feature $z=x^2$, then the oracle 2-partition for $y$ can be realized by partitioning on $\mathcal{Z}$ domain.
\begin{rem}
Applying Lemma \ref{lem:LemmaA} repeatedly leads to solutions to
finding optimal 2-partitions with varying sizes. In particular, for
$y_{(1)}<y_{(2)}<\cdots<y_{(n)}$ with $n>4$, the loss function \eqref{eq:loss.y.ranking}
is minimized by solving the following problem: 
\begin{align}
\underset{i\in\{1,2,\cdots,n-1\}}{\min} & \sum_{j=1}^{i}(y_{(j)}-\mu_{1}^{*})^{2}+\sum_{j=i+1}^{n}(y_{(j)}-\mu_{2}^{*})^{2}=\underset{i\in\{1,2,\cdots,n\}}{\min}\mL(P_{1},P_{2}),\label{eq:MH_min_2tree}
\end{align}
and form the associated partitions. There are still two possible minimizers
for \eqref{eq:MH_min_2tree} as stated in Lemma \ref{lem:LemmaA}.
From the angle of grouping \citep{fisher1958grouping} or analysis of variance (ANOVA), the problem of \eqref{eq:MH_min_2tree} can
be considered as finding a division of $\mathcal{Y}$ into two groups
such that the in-group variance is as small as possible. In other
words, the resulting minimizer would produce ``most in-group homogeneous''
group partitions. The optimal 2-partition for components with varying sizes
depends not only on the ranking information, as it would for the oracle
2-partition with fixed sizes, but also on the distribution information
of the responses $y$. 
\end{rem}

However, most tree models create partitions on $x$'s (e.g., which is a special 
case with $q=1,z\in\mathbb{R}$) but not $y$'s domain for prediction purposes, so how well
we can predict depends on how ``similar'' (or ``concordant'') the response rankings
and input rankings are. For example, if the inputs $x$'s and the
responses $y$'s have the same rankings, then the oracle 2-partition
on response can be realized by corresponding oracle 2-partition on
$x$'s. The following corollary \ref{cor:(Optimal-2-partition-fixed-size},
which is straightforward from Lemma \ref{lem:LemmaA}, gives a sufficient
and necessary condition to attain oracle 2-partitions when splitting
only on $x$. 
\begin{cor}
\label{cor:(Optimal-2-partition-fixed-size}(Oracle 2-partition with
fixed sizes with univariate $x$) For a 2-partition of n elements
$y_{(1)}<y_{(2)}<\cdots<y_{(n)}$ into components of fixed size $i$
and $n-i$, we assume that $n>4,\min(n-i,i)>2$ and assume that there
exists some $C$ such that 
\begin{itemize}
\item $P_{1}^{*}=\{y_{(1)}<y_{(2)}<\cdots<y_{(i)}\}=\{y'\mid\text{the pair }(x,y)\text{ s.t. }x\leq C\},$
and $P_{2}^{*}=\{y_{(i+1)}<y_{(i+2)}<\cdots<y_{(n)}\}=\{y'\mid\text{the pair }(x,y)\text{ s.t. }x>C\}.$ 
\item $P_{1}^{**}=\{y_{(1)}<y_{(2)}<\cdots<y_{(n-i)}\}=\{y'\mid\text{the pair }(x,y)\text{ s.t. }x\leq C\},$
and $P_{2}^{**}=\{y_{(n-i+1)}<y_{(n-i+2)}<\cdots<y_{(n)}\}=\{y'\mid\text{the pair }(x,y)\text{ s.t. }x>C\}.$ 
\end{itemize}
Then these are the only 2-partitions of size $i$ and $n-i$ that
minimize \eqref{eq:loss.y.ranking}. 
\end{cor}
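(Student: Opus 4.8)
The plan is to derive this corollary directly from Lemma \ref{lem:LemmaA} by observing that thresholding on a univariate input $x$ merely \emph{restricts} the feasible set of 2-partitions, and then checking that an unconstrained minimizer, when it happens to be feasible, remains the unique constrained minimizer. Concretely, any split of the form $x \leq C$ versus $x > C$ induces a 2-partition $P_1(C) = \{y' \mid (x,y)\text{ s.t. } x \leq C\}$ and $P_2(C) = \{y' \mid (x,y)\text{ s.t. } x > C\}$ of $\{y_{(1)}, \ldots, y_{(n)}\}$, and the collection of all such threshold-induced partitions is a subset of the collection of all size-$(i, n-i)$ 2-partitions. First I would make this subset relationship explicit and record that the hypothesis of the corollary is precisely the statement that one of the two rank-contiguous oracle partitions $(P_1^*, P_2^*)$ or $(P_1^{**}, P_2^{**})$ of Lemma \ref{lem:LemmaA} lies in this threshold-induced subset, realized at the stated cutoff $C$.

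For sufficiency, I would invoke Lemma \ref{lem:LemmaA}, which guarantees that $(P_1^*, P_2^*)$ and $(P_1^{**}, P_2^{**})$ are global minimizers of the loss \eqref{eq:loss.y.ranking} over all size-$(i, n-i)$ 2-partitions. Since minimizing over the smaller threshold-induced subset cannot produce a value below this global minimum, and since the hypothesized $C$ places one of these global minimizers inside the subset, that threshold split attains the minimum of \eqref{eq:loss.y.ranking}. For necessity, I would again appeal to the uniqueness clause of Lemma \ref{lem:LemmaA}: because $(P_1^*, P_2^*)$ and $(P_1^{**}, P_2^{**})$ are the \emph{only} minimizers over all partitions, any threshold-induced partition $(P_1(C'), P_2(C'))$ that attains the minimum must coincide with one of them. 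Together these establish that the two displayed conditions are not merely sufficient but necessary for a single split on $x$ to realize an oracle 2-partition, matching the biconditional described in the preceding remark.

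I do not anticipate a genuine obstacle here, since the content is essentially inherited from Lemma \ref{lem:LemmaA}; the only points requiring care are bookkeeping. Specifically, I would emphasize that thresholding produces \emph{contiguous} blocks only in the $x$-ordering, not automatically in the $y$-ordering, so the equalities in the two bullets encode a nontrivial concordance requirement between the ranking of $x$ and the ranking of $y$ at the cut. I would also note that the corollary's size restriction $\min(i, n-i) > 2$ is at least as strong as the condition $\min(i, n-i) \geq 2$ required by Lemma \ref{lem:LemmaA}, so that lemma applies verbatim and delivers exactly two minimizers, justifying the word ``only'' in the conclusion and ruling out degenerate cases where uniqueness could otherwise fail.
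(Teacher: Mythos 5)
Your proposal is correct and follows exactly the route the paper intends: the paper states this corollary without a separate proof, describing it as straightforward from Lemma \ref{lem:LemmaA}, and your argument (threshold-induced partitions form a subset of all size-$(i,n-i)$ partitions, the hypothesis places one of the two oracle partitions in that subset, and the uniqueness clause of Lemma \ref{lem:LemmaA} handles necessity) is precisely that straightforward derivation. Your bookkeeping remarks — the concordance requirement between $x$- and $y$-orderings encoded by the bulleted equalities, and that $\min(i,n-i)>2$ implies the lemma's hypothesis $\min(i,n-i)\geq 2$ — are accurate and complete the check.
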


This means that a sufficient condition for us to attain optimal
partition by splitting once on $x$ is that the ranks of $x$
and $y$ are the same or linearly related. %
In fact, when the input and response rankings are the same, the CART loss
function described by \citet{scornet2015consistency} formula (2)
or \citet{hastie2009elements} Chapter 9, takes the following form.
When input and response rankings are the same and $x_{(i)}\leq C<x_{(i+1)}$
the following loss function value remains the same: 
\begin{align*}
 \min_{i\in\{1,2,\cdots,n\}}\left(\sum_{j=1}^{i}(y_{(j)}-\mu_{1}^{*})^{2}+\sum_{j=i+1}^{n}(y_{(j)}-\mu_{2}^{*})^{2}\right)=\eqref{eq:loss.y.ranking}=\eqref{eq:MH_min_2tree}.
\end{align*}
In this case, such $x$ with the same ranking of $y$ will be the
most likely splitting coordinate. We next provide two such examples
using monotonic transformation and interpolators, respectively. 
\begin{example}
\label{cor:monotonic_f_0}(Monotonic transformation) Suppose that $d=1$ and noiseless 
$y(x)=f(x)$ is a monotonic function of univariate $x\in\mathbb{R}$,
then choosing any observation $x\in\mathcal{X}^{(N)}$ as a splitting
value will give us an oracle 2-partition corresponding to the fixed
sizes. This is because under monotonic transformation $\theta=f$, splitting
on any observed $x$ is equivalent to an oracle 2-partition on $y$.
By Lemma \ref{lem:LemmaA}, there cannot be any other 2-partition
of the same size on $y$ that gives us a strictly larger principal
decision ratio.

Furthermore, if we assume that $z=\theta_1 x$ for $d=1$ and another $\theta_1$ that is monotonic as well, then we can come to the expected conclusion
that $y(x)=f(\theta_1 x)$ always induces the optimal 2-partition, since
$\theta \circ\theta_1=f\circ\theta_1$ is again monotonic. 
This example shows that when the true underlying function is monotonic, it will induce an oracle partition on $\mathcal{X}$ and $\mathcal{Y}$ domains simultaneously.
\end{example}

\begin{example}
\label{exa:(Interpolator)-Suppose-that}(Interpolator) Suppose that $d=1$, then for any pairs
of $(x_{i},y_{i})$ for $i=1,\ldots,n$, we can construct an $n$-degree
polynomial interpolator, denoted by $g$ as a mapping, such that $y_{i}=g(x_{i})$
for all $i$. This transformed symbolic feature $z=g(x)$ maintains the same
ranking of the response and thus attains the optimal oracle 2-partition.
Then, by Corollary \ref{cor:(Optimal-2-partition-fixed-size}, using
the output of such an interpolator as input will always maximize the
\eqref{eq:loss.y.ranking} and hence the principal decision ratio.
Even without exact interpolation, when $n\ll q$, it is easy to observe
over-fitting, which means we can use the transformed features to construct such an interpolator mapping. This example shows a major difference between finite-sample and asymptotic scenarios.
\end{example}

Therefore, a decision tree tends to split on a feature for which the response
is a monotonic transformation, and likewise, an interpolator would
be the most likely splitting feature if seen by the tree. This perfectly
explains the fact that decision tree is scale-invariant in the sense
that its prediction remains unchanged if we multiply the input by
a scalar as noted in \citet{bleich_variable_2014}. In fact, this
shows a stronger result that it is multi-way scale-invariant. Namely, if we
simply multiply possibly different but non-zero scalars to each coordinate of input features,
the principal decision ratio still remains unchanged (since multiplying a non-zero scalar is a monotonic transformation and preserves the ranks). While the inability
of decision trees to distinguish between monotonic transformations
is well known in the literature \citep{bleich_variable_2014}, interpolators
are less discussed but an interesting example that shows there always
exists a mapping in finite sample sizes to mislead decision trees.
Such data-dependent mappings are typically ruled out in the pool of
symbolic expressions considered in symbolic selection, making it more robust to regular regressions.

We next analyze the principal decision ratios when selecting
between a more general class of transforms that generates symbolic features $\bm{z}$
from $\bm{x}$, other than the two examples above.

\subsection{\label{subsec:Piece-wise-monotonic-transforms}Piece-wise monotonic
transforms }

%
We now analyze the splitting behavior of the decision trees when selecting features $\bm{z}$ generated by transformations. Such selection
is crucial in symbolic regressions (See Example \ref{exa:(Symbolic-feature-mappings)}).
In particular, we consider piece-wise monotonic transforms, which
are widely used to generate symbolic features and are also of interest
due to their flexibility, as they can approximate well a sufficiently
large class of transformations of interest \citep{newman1972piecewise}.

Throughout this section, we consider two generic features $\bm{z}_{i,k_{1}}=\theta_{1}\bm{x}_{i}$
and $\bm{z}_{i,k_{2}}=\theta_{2}\bm{x}_{i}$ for an arbitrary $i$
(in shorthand, $\bm{z}_{k}=\theta\bm{x}_{k}$), where both $\theta_{1}$
and $\theta_{2}$ are piece-wise monotonic in the form of
$\theta\bm{x}_{i,k}$, where univariate mappings $\theta$ transforms the $k$-th coordinate of $\bm{x}$. 

\global\long\def\mI{\mathcal{I}}%
\global\long\def\mIr{\mathcal{I}_{1\cap2}}%
A piece-wise monotonic transform $\theta$ defined on $\mathbb{R}$
is characterized by a partition, which consists of finitely many disjoint
intervals (i.e., monotonic intervals) on each $\theta$ is monotonic.
Such partitions are not unique as one can always divide a subset while 
maintaining the strict monotonicity of $\theta$ on the finer partition.
Unless stated otherwise, we always choose the partition with the smallest
cardinality.

We aim to characterize the principal decision ratio at two pairs of
splitting values and coordinates $\left(C_{1},k_{1}\right)$ and $\left(C_{2},k_{2}\right)$,
i.e., the ratio of 
\begin{align}
\sum_{i=1}^{n}\left(y_{i}-\mu_{L}^{C_{1},k_{1}}\right)^{2}\bm{1}\left(\bm{z}_{i,k_{1}}\leq C_{1}\right)+\sum_{i=1}^{n}\left(y_{i}-\mu_{R}^{C_{1},k_{1}}\right)^{2}\bm{1}\left(\bm{z}_{i,k_{1}}>C_{1}\right)\label{eq:C1eqs}
\end{align}
and 
\begin{align}
\sum_{i=1}^{n}\left(y_{i}-\mu_{L}^{C_{2},k_{2}}\right)^{2}\bm{1}\left(\bm{z}_{i,k_{2}}\leq C_{2}\right)+\sum_{i=1}^{n}\left(y_{i}-\mu_{R}^{C_{2},k_{2}}\right)^{2}\bm{1}\left(\bm{z}_{i,k_{2}}>C_{2}\right).\label{eq:C2eqs}
\end{align}

The following definitions and simple properties of $\theta_{1}$ and
$\theta_{2}$ are useful. %

\begin{defn}
\label{def:(Refined-monotonic-intervals)}(Refined monotonic intervals)
Consider two piece-wise monotonic transforms $\theta_{1}$ and $\theta_{2}$
mapping from $\mathbb{R}$ onto $\mathbb{R}$, with monotonic intervals
$\mI_{1}$ and $\mI_{2}$ on $\mathbb{R}$ respectively. We define
the \emph{refined monotonic intervals} for the transformation pair
$(\theta_{1},\theta_{2})$ to be the collection of intervals $\mathcal{I}_{1\cap2}\coloneqq\{I=I_{1}\cap I_{2}\mid I_{1}\in\mathcal{I}_{1},I_{2}\in\mathcal{I}_{2}\}$. 
\end{defn}

The refined monotonic intervals $\mathcal{I}_{1\cap2}$ is a new partition
of the $x$-domain induced by $(\theta_{1},\theta_{2})$. On each
refined monotonic interval $I\in\mathcal{I}_{1\cap2}$, the two piece-wise
monotonic transforms $\theta_{1}$ and $\theta_{2}$ are both monotonic;
that is, the restricted transforms $\theta_{1}\mid_{I}$ and $\theta_{2}\mid_{I}$
are both monotonic. The next result follows directly from Definition
\ref{def:(Refined-monotonic-intervals)} but introduces pre-images
of the restricted transforms that are useful to study the principal
decision ratio. 
\begin{cor}
\label{cor:On-each-refined}On each refined monotonic interval $I\in\mathcal{I}_{1\cap2}$
and for any value $C\in\mathbb{R}$, the piece-wise monotonic transform
$\theta_{1}$ and $\theta_{2}$ can have 0 or 1 pre-image. That means,
there exists 0 or 1 value $t\in I$ such that the restricted transform
$\theta_{1}\mid_{I}(t)=C$ and $\theta_{2}\mid_{I}(t)=C$. 
\end{cor}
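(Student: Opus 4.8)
The plan is to reduce the statement to the elementary fact that a strictly monotonic function is injective, so that each level set is a singleton or empty. First I would invoke Definition \ref{def:(Refined-monotonic-intervals)} to write an arbitrary refined monotonic interval as $I = I_1 \cap I_2$ with $I_1 \in \mathcal{I}_1$ and $I_2 \in \mathcal{I}_2$. This is the only structural input needed, and it is immediate from the construction of $\mathcal{I}_{1\cap2}$.

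Next, since by the setup $\theta_1$ is strictly monotonic on all of $I_1$ and $I \subseteq I_1$, the restriction $\theta_1\mid_I$ inherits strict monotonicity; the identical observation using $I \subseteq I_2$ shows $\theta_2\mid_I$ is strictly monotonic. I would then record the standard fact that a strictly monotonic map on an interval is injective, so both $\theta_1\mid_I$ and $\theta_2\mid_I$ are injective on $I$.

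With injectivity in hand, fix any value $C \in \mathbb{R}$ and consider the level set $\{t \in I : \theta_1\mid_I(t) = C\}$. Injectivity forces this set to contain at most one point: it is empty precisely when $C$ lies outside the range $\theta_1(I)$, and it is a singleton precisely when $C \in \theta_1(I)$. This yields the ``$0$ or $1$ pre-image'' conclusion for $\theta_1\mid_I$, and the same argument verbatim applies to $\theta_2\mid_I$, completing the proof.

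There is essentially no hard step here; the corollary is a direct unpacking of Definition \ref{def:(Refined-monotonic-intervals)} combined with the injectivity of strictly monotonic maps, which is exactly why it is stated as a corollary rather than a lemma. The only point that warrants care is confirming that \emph{monotonic} is used in the strict sense throughout the piece-wise monotonic setup (as the surrounding text's reference to ``strict monotonicity'' makes clear), since a merely non-strict monotone function could be constant on a subinterval and thereby admit infinitely many pre-images of a single value, invalidating the claim.
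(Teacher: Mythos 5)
Your proposal is correct and takes essentially the same route as the paper, which offers no separate proof and simply notes that the corollary ``follows directly from Definition \ref{def:(Refined-monotonic-intervals)}'': on $I = I_1 \cap I_2$ both restricted transforms are strictly monotonic, hence injective, so each value $C$ has at most one pre-image. Your closing caveat about strict versus non-strict monotonicity is well taken and consistent with the paper's usage (e.g., Proposition \ref{prop:piecewise-monotonic-var-1} speaks of ``piece-wise strictly monotonic transformations'').
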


We next study the principal decision ratio of these two transformations
at arbitrary splitting values $C_{1}$ and $C_{2}$. Although $\theta_{1}$
and $\theta_{2}$ are not necessarily globally invertible, they are
invertible on each refined monotonic interval $I\in\mathcal{I}_{1\cap2}$.
By definition \eqref{eq:feature_def}, we note that $\bm{z}_{i,k_{1}}\leq C_{1}$
can be written as $\theta_{1}\bm{x}_{i,k}\leq C_{1}$, which can be
reduced to $\bm{x}_{i,k}\leq\theta_{1}^{-1}C_{1}$ on each $I\in\mathcal{I}_{1\cap2}$
where $\theta_{1}^{-1}C_{1}$ is well-defined. Using Corollary \ref{cor:On-each-refined},
for a refined interval $I\in\mathcal{I}_{1\cap2}$ we can link the
relative magnitude of principal decision ratio $\tau$ to the behavior
of covariate $\bm{x}$ instead of $\bm{z}$. The following result
Proposition \ref{prop:For-any-split-risk-decreases} is consistent
with the discussion that splitting will decrease the Bayes risk in
Section 9.3 and Theorem 9.5 of \citet{breiman2017classification}. 
\begin{prop}
\label{prop:For-any-split-risk-decreases}For any splitting value
$C\in\mathbb{R}$, the means $\mu_{L}^{C,k},\mu_{R}^{C,k}$ defined
for $\bm{z}$ as in \eqref{eq:trans2}, the parent node mean $\mu^{\sharp}$,
and the $k$-th coordinate $\bm{z}_{k}$, we have 
\begin{align*}
\sum_{i=1}^{n}(y_{i}-\mu_{L}^{C,k})^{2}\bm{1}(\bm{z}_{i,k}\leq C)+\sum_{i=1}^{n}(y_{i}-\mu_{R}^{C,k})^{2}\bm{1}(\bm{z}_{i,k}>C) & =\\
\sum_{i=1}^{n}(y_{(i)}-\mu_{L}^{C,k})^{2}\bm{1}(\bm{z}_{(i),k}\leq C)+\sum_{i=1}^{n}(y_{(i)}-\mu_{R}^{C,k})^{2}\bm{1}(\bm{z}_{(i),k}>C) & \leq\sum_{i=1}^{n}(y_{(i)}-\mu^{\sharp})^{2},
\end{align*}
where the equality holds if and only if $\mu_{L}^{C,k}=\mu_{R}^{C,k}=\mu^{\sharp}$. 
\end{prop}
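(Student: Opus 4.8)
The plan is to recognize this as the classical analysis-of-variance decomposition of the total sum of squares into within-group and between-group components, applied to the two-group partition induced by the split $(C,k)$. First I would dispose of the equality on the first line of the display: sorting the samples by response ranks merely permutes the summands while preserving each pair $(\bm{z}_{(i)},y_{(i)})$, so by commutativity of addition the two expressions coincide (and likewise $\sum_i (y_i-\mu^{\sharp})^2=\sum_i (y_{(i)}-\mu^{\sharp})^2$). The substance of the proposition is therefore the inequality on the right.

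To prove the inequality, write $S_L=\{i:\bm{z}_{i,k}\le C\}$ and $S_R=\{i:\bm{z}_{i,k}>C\}$ with sizes $n_L,n_R$, so that $\mu_L^{C,k}$ and $\mu_R^{C,k}$ are the sample means over $S_L$ and $S_R$, and $\mu^{\sharp}=\frac1n\sum_{i=1}^n y_i$. The key step is to expand, on each child separately, $y_i-\mu^{\sharp}=(y_i-\mu_L^{C,k})+(\mu_L^{C,k}-\mu^{\sharp})$ and square; the cross term $2(\mu_L^{C,k}-\mu^{\sharp})\sum_{i\in S_L}(y_i-\mu_L^{C,k})$ vanishes precisely because $\mu_L^{C,k}$ is the mean of $S_L$, so that $\sum_{i\in S_L}(y_i-\mu_L^{C,k})=0$ (and symmetrically on $S_R$). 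Summing the two identities yields
\[
\sum_{i=1}^n (y_i-\mu^{\sharp})^2 = \Big(\sum_{i\in S_L}(y_i-\mu_L^{C,k})^2 + \sum_{i\in S_R}(y_i-\mu_R^{C,k})^2\Big) + n_L(\mu_L^{C,k}-\mu^{\sharp})^2 + n_R(\mu_R^{C,k}-\mu^{\sharp})^2.
\]
The first parenthesized term is exactly the post-split loss on the left-hand side of the proposition, and the remaining between-group term is a sum of nonnegative quantities; dropping it gives the inequality.

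For the equality condition, the bound is tight if and only if the between-group term $n_L(\mu_L^{C,k}-\mu^{\sharp})^2+n_R(\mu_R^{C,k}-\mu^{\sharp})^2$ vanishes. For a nontrivial split with $n_L,n_R>0$ this forces $\mu_L^{C,k}=\mu_R^{C,k}=\mu^{\sharp}$, and conversely these equalities make the term zero. I would also record that $\mu^{\sharp}=(n_L\mu_L^{C,k}+n_R\mu_R^{C,k})/n$ is the size-weighted average of the two child means, so in fact $\mu_L^{C,k}=\mu^{\sharp}$ alone already forces $\mu_R^{C,k}=\mu^{\sharp}$; equivalently, equality holds exactly when the two children share the same mean.

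There is no genuine analytic obstacle here, since the computation is the textbook variance decomposition. The only care required is bookkeeping: keeping the pairing straight so that the cross terms vanish (which is precisely why the minimizing child means $\mu_L^{C,k},\mu_R^{C,k}$ must enter, rather than arbitrary constants), and excluding the degenerate empty-child split when stating the equality case.
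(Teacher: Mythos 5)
Your proof is correct and follows essentially the same route as the paper's: expand $y_i-\mu^{\sharp}$ around the child mean on each side of the split, observe that the cross terms vanish because $\mu_L^{C,k}$ and $\mu_R^{C,k}$ are the sample means of their respective children, and then drop the nonnegative between-group term. In fact you are slightly more thorough than the paper's own argument, which handles the first displayed equality only via a ``without loss of generality'' reordering and leaves the ``if and only if'' equality condition (including the degenerate empty-child case) unproved.
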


\begin{proof}
See Appendix \ref{sec:Proof-of-Proposition_93}. 
\end{proof}
In the spirit of Theorem 9.5 of \citet{breiman2017classification},
we can see from the above argument that a feature mapping that induces
more splits will be favored in the sense that it increases the   principal decision ratio, hence the likelihood
for the splitted children nodes. This helps
compare two transformations in terms of the existence of pre-images, or to ``contrast'' these two transformations at a finer resolution level. Below, we start with an illustrative example, followed by its generalization.

\begin{figure}[t]
\centering

\includegraphics[width=0.99\textwidth]{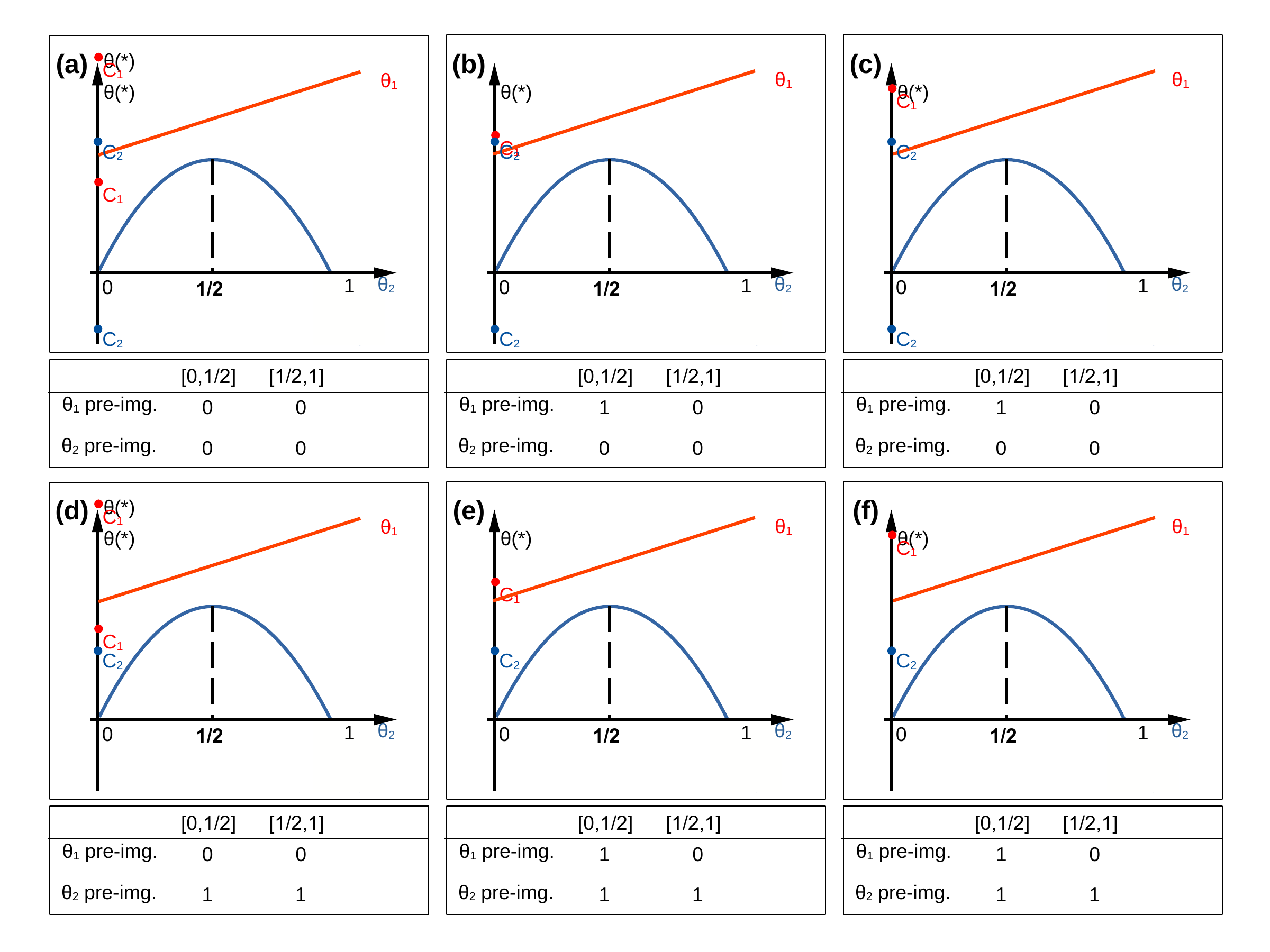}

\caption{\label{fig:Refined-monotonic-intervals} Refined monotonic intervals
$\mathcal{I}_{2}=\{[0,1/2],[1/2,1]\}$ for the $\theta_{1}(x)=x$,
$\theta_{2}(x)=-4x^{2}+4x$ shown. We use vertical black dashed lines
to illustrate the refined monotonic intervals, and count the number
of pre-images for $\theta_{1},\theta_{2}$ over each refined intervals. }
\end{figure}

\begin{example}
\label{exa:Let-us-consider}Let us consider the following three cases
with fixed $\theta_{1}(x)=x+1.2$, $\theta_{2}(x)=-4x^{2}+4x$, with
$\mathcal{I}_{1}=\{[0,1]\}$, $\mathcal{I}_{2}=\{[0,1/2],[1/2,1]\}$,
and the refined monotonic intervals $\mathcal{I}_{1\cap2}=\{[0,1/2],[1/2,1]\}$.
And we consider expressions in \eqref{eq:C1eqs}, \eqref{eq:C2eqs}
along with $\bm{z}_{i,k_{1}}=\theta_{1}\bm{x}_{i}$ and $\bm{z}_{i,k_{2}}=\theta_{2}\bm{x}_{i}$,
as illustrated by Figure \ref{fig:Refined-monotonic-intervals}. %
We can consider the following cases:

Let us consider $I=[0,1/2]$ first, if we fixed $C_{1}\in(-\infty,1.2)\cup(2.2,\infty)$
and choose $C_{2}\in(-\infty,0)\cup(1,\infty)$ as shown in the
(a) in Figure \ref{fig:Refined-monotonic-intervals}. We cannot differentiate
between $\theta_{1}$ and $\theta_{2}$ based solely on the principal
decision ratio when splitting on inputs $\bm{x}\in I$, since both
$\theta_{1}$ and $\theta_{2}$ will have 0 pre-image in $I$, so
the corresponding splits associated with $C_{1},C_{2}$ have the same
chance of being selected on this interval. However, if we choose $C_{1}\in(1.2,1.7)\text{ or }[1.7,2.2),C_{2}\in(-\infty,0)\cup(1,\infty)$
in such a way shown as (b) or (c), then over the $I=[0,1/2]\text{ or }[1/2,1]$ we have a higher
chance of selecting $\theta_{1}$ from analysis of expressions in
\eqref{eq:C1eqs}, \eqref{eq:C2eqs}. If we fixed $C_{1}\in(-\infty,1.2)\cup(2.2,\infty),C_{2}\in[0,1]$
this analysis remains the same and we learn that (d) in Figure \ref{fig:Refined-monotonic-intervals}
can differentiate $\theta_{1},\theta_{2}$.

Now if we choose $C_{1}\in[1.2,2.2],C_{2}\in[0,1]$, then $\theta_{1},\theta_{2}$ will both 
have 1 pre-image in $I$ as shown in (e) and (f) in 
Figure \ref{fig:Refined-monotonic-intervals}. Then we are back to the computation of \eqref{eq:MH_min_2tree}.%
The detailed analysis
will be given in Example \ref{exa:(-calculation)-Suppose}. 
\end{example}

The observation in the above example can be summarized as the following
result linking the principal decision ratios and the refined intervals
of a given univariate feature mapping. 
\begin{prop}
\label{prop:piecewise-monotonic-var-1}Consider one splitting variable
$\bm{x}_{k}$ for a fixed $k\in\{1,2,\cdots,d\}$ and two piece-wise
strictly monotonic transformations $\theta_{1}$ and $\theta_{2}$
with refined monotonic intervals $\mathcal{I}_{1\cap2}$. For any
$C_{1},C_{2}\in\mathbb{R}$ and any $I\in\mathcal{I}_{1\cap2}$, we
have

(i) When $\theta_{1}$ and $\theta_{2}$ have 0 pre-image for $C_{1}$
and $C_{2}$ on $I$, the principal decision ratio is 1 for the transformed
variates $\theta_{1}\bm{x}_{k}$ and $\theta_{2}\bm{x}_{k}$ over
this interval $I\in\mathcal{I}_{1\cap2}$.

(ii) When $\theta_{1}$ and $\theta_{2}$ have different numbers of
pre-images for $C_{1}$ and $C_{2}$ on $I$, the principal decision
ratio is larger for the transform with 1 pre-image over this interval
$I\in\mathcal{I}_{1\cap2}$. 

(iii) When $\theta_{1}$ and $\theta_{2}$ have 1 pre-image for $C_{1}$
and $C_{2}$ on $I$, the principal decision ratio is larger for the
transform $\theta_{i}$ that solves the following problem:
\begin{align*}
\min_{j=1,2} & \sum_{i=1}^{n}(y_{i}-\mu_{L}^{j})^{2}\bm{1}(\bm{x}_{i,k_{j}}\leq\theta_{j}^{-1}C_{j})+\sum_{i=1}^{n}(y_{i}-\mu_{R}^{j})^{2}\bm{1}(\bm{x}_{i,k_{j}}>\theta_{j}^{-1}C_{j})
\end{align*}
When $k_{1}=k_{2}=k$ and $C_{1}=C_{2}=C$, this reduces to \eqref{eq:MH_min_2tree}.
\end{prop}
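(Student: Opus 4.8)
The plan is to localize the entire analysis to a single refined monotonic interval $I\in\mathcal{I}_{1\cap2}$, treating the subcollection of samples $\eta_{I}=\{i:\bm{x}_{i,k}\in I\}$ as the node under consideration, and to reduce each split in $\bm{z}$-space to an equivalent split in $\bm{x}$-space. Since $\theta_{1}$ and $\theta_{2}$ are both strictly monotonic on $I$ by Definition \ref{def:(Refined-monotonic-intervals)}, each is invertible on $I$, and Corollary \ref{cor:On-each-refined} tells us that for a fixed $C_{j}$ the restricted transform $\theta_{j}\mid_{I}$ has either $0$ or $1$ pre-image. First I would record the dichotomy this induces on the indicator $\bm{1}(\theta_{j}\bm{x}_{i,k}\leq C_{j})$ over $\eta_{I}$: if there is no pre-image, strict monotonicity forces every sample of $\eta_{I}$ to the same child, so the split is \emph{trivial} on $I$; if there is a unique pre-image $t_{j}=\theta_{j}^{-1}C_{j}\in I$, then $\{\theta_{j}\bm{x}_{i,k}\leq C_{j}\}$ coincides, up to the orientation of $\theta_{j}$, with $\{\bm{x}_{i,k}\leq t_{j}\}$, so the split genuinely partitions $\eta_{I}$.

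With this reduction in hand the three cases follow quickly. For (i), both splits are trivial on $\eta_{I}$, so each assigns all of $\eta_{I}$ to a single child and the loss equals the un-split sum of squares $\sum_{i\in\eta_{I}}(y_{i}-\mu^{\sharp})^{2}$ in both the numerator and the denominator of \eqref{eq:MH_ratio_serial}; the two exponents cancel and $\tau=1$. For (ii), say $\theta_{1}$ has one pre-image and $\theta_{2}$ has none: the $\theta_{2}$-split contributes the un-split sum of squares, while the $\theta_{1}$-split genuinely partitions $\eta_{I}$ and hence, by Proposition \ref{prop:For-any-split-risk-decreases} applied to the node $\eta_{I}$, yields a loss no larger than the un-split sum of squares (strictly smaller unless the two child means coincide). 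Writing $\tau=\exp(\mathrm{loss}_{2}-\mathrm{loss}_{1})$ then gives $\tau\geq1$, so the principal decision ratio favors the transform with one pre-image, regardless of which transform sits in the numerator.

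For (iii), both splits produce genuine $2$-partitions of $\eta_{I}$. Using the pre-image identity $t_{j}=\theta_{j}^{-1}C_{j}$ from the first paragraph, I would rewrite each numerator/denominator loss in \eqref{eq:MH_ratio_serial} purely in $\bm{x}$-space as $\sum_{i}(y_{i}-\mu_{L}^{j})^{2}\bm{1}(\bm{x}_{i,k}\leq t_{j})+\sum_{i}(y_{i}-\mu_{R}^{j})^{2}\bm{1}(\bm{x}_{i,k}>t_{j})$, noting that a decreasing $\theta_{j}$ merely swaps the roles of the left and right children, under which the loss \eqref{eq:loss.y.ranking} is invariant. The principal decision ratio is therefore larger for whichever transform attains the smaller of these two $2$-partition losses, i.e.\ the minimizer of the displayed problem; specializing to $\theta_{1}=\theta_{2}$ (so that $k_{1}=k_{2}=k$, $C_{1}=C_{2}=C$, and $t_{1}=t_{2}$) collapses the comparison to the one-dimensional threshold optimization \eqref{eq:MH_min_2tree} studied earlier.

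The main obstacle I anticipate is bookkeeping rather than depth: carefully justifying that the per-interval (localized) statement is the right object and that restricting Proposition \ref{prop:For-any-split-risk-decreases} to the sub-node $\eta_{I}$ is legitimate, and handling the orientation of each $\theta_{j}$ (increasing versus decreasing) so that the $\bm{z}$-to-$\bm{x}$ translation of the indicators is correct without disturbing the symmetry of the loss under exchanging the two partition blocks. Once the trivial-versus-genuine dichotomy and this loss invariance are cleanly stated, cases (i)--(iii) are immediate consequences of Corollary \ref{cor:On-each-refined} and Proposition \ref{prop:For-any-split-risk-decreases}.
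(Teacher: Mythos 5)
Your proposal is correct and follows essentially the same route as the paper's proof: localize the ratio \eqref{eq:MH_ratio_serial} to each refined interval $I$, use the $0$-versus-$1$ pre-image dichotomy from Corollary \ref{cor:On-each-refined} to classify splits as trivial or genuine, invoke Proposition \ref{prop:For-any-split-risk-decreases} for case (ii), and translate both genuine splits into $\bm{x}$-space thresholds $\theta_{j}^{-1}C_{j}$ for case (iii). Your explicit handling of the orientation of a decreasing $\theta_{j}$ (swapping children, under which the loss is invariant) is a point the paper's proof leaves implicit, but it does not change the argument.
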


\begin{proof}
See Appendix \ref{sec:Proof-prop13}. 
\end{proof}
Using the characterization in Proposition~\ref{prop:piecewise-monotonic-var-1},
the probability of $\theta_{1}$ having a larger principal decision
ratio when compared to $\theta_{2}$ can be calculated as follows.
Let $x$ be the $k$-th coordinate of $\bm{x}$, i.e., $\bm{x}_{k}$.
In practice we usually use $C_{1}=C_{2}=C$, and it is not hard to
exclude the case (iii) in the above Proposition \ref{prop:piecewise-monotonic-var-1}:
Example \ref{cor:monotonic_f_0} tells us that $\theta_{1}+C_{0}$
and $\theta_{1}$ for $C_{0}\in\mathbb{R}$ will not change principal
decision ratio $\tau$. Assuming that $\sup\theta_{2}$ and $\inf\theta_{1}$
are finite, we can pick $C_{0}=\sup\theta_{2}-\inf\theta_{1}+1$ to
ensure that any $C_{1}=C_{2}=C$ will not lead to the case (iii).
\begin{prop}
\label{prop:Under-the-assumption}Under the assumption of Proposition
\ref{prop:piecewise-monotonic-var-1} and there is not $I$ where
both $\theta_{1},\theta_{2}$ both have 1 pre-images, assume additionally
that the input location $\bm{x}$ is distributed as $\mathbb{P}$
and denote the $N_{1}+N_{2}$ refined monotonic intervals as 
\begin{itemize}
\item $I_{1}^{1},\cdots,I_{1}^{N_{1}}$ where $\theta_{1}$ has 1 pre-image
and $\theta_{2}$ has 0 pre-image of $C$ ($\theta_{1}$ has higher
chance of being selected); 
\item $I_{2}^{1},\cdots,I_{2}^{N_{2}}$ where $\theta_{2}$ has 1 pre-image
and $\theta_{1}$ has 0 pre-image of $C$ ($\theta_{2}$ has higher
chance of being selected). 
\end{itemize}
Then, the probability that the principal decision ratio prefers $\theta_{1}$
over $\theta_{2}$ can be computed as 
\begin{equation}
p_{1>2}=\sum_{i=1}^{N_{1}}\int_{I_{1}^{i}}1d\mathbb{P}-\sum_{j=1}^{N_{2}}\int_{I_{2}^{j}}1d\mathbb{P}=\mathbb{P}\left(\cup_{i=1}^{N_{1}}I_{1}^{i}\right)-\mathbb{P}\left(\cup_{j=1}^{N_{2}}I_{2}^{j}\right).\label{eq:p1>2}
\end{equation}
\end{prop}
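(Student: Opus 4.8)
The plan is to reduce the statement entirely to the per-interval classification already established in Proposition~\ref{prop:piecewise-monotonic-var-1}, and then add up the $\mathbb{P}$-mass of the favorable and unfavorable regions. First I would recall that, by construction (Definition~\ref{def:(Refined-monotonic-intervals)}), the refined monotonic intervals $\mathcal{I}_{1\cap2}$ form a partition of the $x$-domain into finitely many intervals that overlap only at their endpoints. Since the paper assumes continuous covariates, those shared endpoints constitute a finite set and hence carry zero mass under $\mathbb{P}$; this lets me treat the $I\in\mathcal{I}_{1\cap2}$ as essentially disjoint, so that $\mathbb{P}(\cup_i I_1^i)=\sum_i\mathbb{P}(I_1^i)=\sum_i\int_{I_1^i}1\,d\mathbb{P}$, and likewise for the $I_2^j$. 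This is exactly the second equality asserted in \eqref{eq:p1>2}, so the content of the proposition is really the first equality.

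Next I would invoke the standing hypothesis — reinforced by the shift argument through Example~\ref{cor:monotonic_f_0} that precedes the statement — that no refined interval carries a simultaneous pre-image of $C$ for both $\theta_1$ and $\theta_2$. This removes case (iii) of Proposition~\ref{prop:piecewise-monotonic-var-1} from play, so that every interval $I\in\mathcal{I}_{1\cap2}$ falls into exactly one of three types: (a) both transforms have $0$ pre-images of $C$ on $I$; (b) $\theta_1$ has one pre-image and $\theta_2$ has none (the intervals relabelled $I_1^1,\dots,I_1^{N_1}$); or (c) $\theta_2$ has one and $\theta_1$ has none (the $I_2^1,\dots,I_2^{N_2}$). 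By Corollary~\ref{cor:On-each-refined} the pre-image count on each refined interval is $0$ or $1$, so this trichotomy is exhaustive.

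The central step is to translate this geometric classification into the net-preference quantity. Interpreting the randomness in $p_{1>2}$ as carried by the split location $x\sim\mathbb{P}$ in the $x$-domain, the event that the principal decision ratio strictly prefers $\theta_1$ is precisely the event that $x$ lands in a type-(b) interval, by Proposition~\ref{prop:piecewise-monotonic-var-1}(ii); symmetrically, preference for $\theta_2$ corresponds to landing in a type-(c) interval. On type-(a) intervals Proposition~\ref{prop:piecewise-monotonic-var-1}(i) gives a principal decision ratio identically equal to $1$, so such intervals contribute no net preference in either direction. Writing the net preference as $p_{1>2}=\mathbb{P}(\theta_1\text{ preferred})-\mathbb{P}(\theta_2\text{ preferred})$ and substituting the two masses $\mathbb{P}(\cup_i I_1^i)$ and $\mathbb{P}(\cup_j I_2^j)$ computed above yields \eqref{eq:p1>2}.

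I expect the main obstacle to be precisely this third step: arguing rigorously that the preference of the principal decision ratio is a deterministic function of the interval type in which the split point lies, and that type-(a) intervals cancel exactly rather than merely contributing negligible terms. The latter needs case (i) of Proposition~\ref{prop:piecewise-monotonic-var-1} to supply the \emph{exact} value $1$ (not an approximation), and it requires being explicit that all randomness entering $p_{1>2}$ is the location $x\sim\mathbb{P}$, so that the preference is indeed a measurable indicator of the interval type. Once the preference is pinned down as such an indicator, the remaining work — disjointness of intervals, null endpoints, and finite additivity of $\mathbb{P}$ — is routine bookkeeping.
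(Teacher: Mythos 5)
Your proposal is correct and follows essentially the same route as the paper, which offers no separate proof for this proposition but treats it as an immediate consequence of the trichotomy in Proposition~\ref{prop:piecewise-monotonic-var-1}: case (iii) is excluded by hypothesis, case (i) intervals contribute nothing since the ratio is exactly $1$, and case (ii) intervals determine the preference, so $p_{1>2}$ is the net mass $\mathbb{P}\left(\cup_{i}I_{1}^{i}\right)-\mathbb{P}\left(\cup_{j}I_{2}^{j}\right)$ under the split-location distribution $\mathbb{P}$. Your added bookkeeping (null endpoints of the refined intervals, measurability of the preference as an indicator of interval type) is a sound and slightly more careful rendering of what the paper leaves implicit.
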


Note that these intervals $I_{1}^{1},\cdots,I_{1}^{N_{1}}$ and $I_{2}^{1},\cdots,I_{2}^{N_{2}}$
are dependent on the fixed transformation pair $(\theta_{1},\theta_{2})$
and it is possible to optimize over $C_{1}$ and $C_{2}$ to maximize
(or minimize) the quantity $p_{1>2}$ in \eqref{eq:p1>2}. 
\begin{example}
\label{exa:(-calculation)-Suppose}($p_{1>2}$ calculation) Suppose
that $\mathbb{P}$ is the uniform measure on $[0,1]$, and consider
$\theta_{1}(x)=x+1.2$, $\theta_{2}(x)=-4x^{2}+4x$ with $C_{1}=C_{2}=C$
as in Example \ref{exa:Let-us-consider}, and the refined monotonic
intervals $\mathcal{I}_{2}=\{[0,1/2],[1/2,1]\}$. Applying \eqref{eq:p1>2}
leads to

\begin{adjustbox}{center}%
\begin{tabular}{cccccc}
\toprule 
$C\in(-\infty,0)$  & $C\in(0,1)$ & $C\in[1,1.2)$ & $C\in[1.2,1.7)$  & $C\in[1.7,2.2)$  & $C\in(2.2,\infty)$ \tabularnewline
\midrule
\midrule 
case (i)  & $I_{2}^{1}=[0,1]$  & case (i)  & $I_{1}^{1}=[0,1/2]$  & $I_{1}^{2}=[1/2,1]$  & case (i) \tabularnewline
\bottomrule
\end{tabular}\end{adjustbox} \medskip{}
 \vspace{2pt}

For other $C$'s, if we assume $\mathbb{P}$
is uniform, probabilities $p_{1>2}$ can be filled in as: \hspace{2pt}

\begin{adjustbox}{center}%
\begin{tabular}{cccccc}
\toprule 
$C\in(-\infty,0)$  & $C\in(0,1)$ & $C\in[1,1.2)$ & $C\in[1.2,1.7)$  & $C\in[1.7,2.2]$  & $C\in(2.2,\infty)$ \tabularnewline
\midrule
\midrule 
0. & 1. & 0. & 0.5  & 0.5  & 0.\tabularnewline
\bottomrule
\end{tabular}\end{adjustbox} \vspace{2pt}

With this table, one can observe that the value of $p_{1>2}$ can
be maximized by choosing $C\in[1.2,2.2]$; and it can be minimized
by choosing $C\in(0,1)$. 
\end{example}

\section{\label{sec:Global-rankings-with}Global Rankings with Regressions}

The ranking perspective established in preceding sections is focused on local splits. We now turn to extending this perspective to study the global performance of tree-based methods, including both a single tree consisting of multiple splits and tree ensembles, in terms of ranking. In this section,   
we always consider the full dataset of size $N$ instead of node-specific
sample size $n$. On this full dataset, we will show that tree-based
methods such as CART and BART, when trained in a supervised regression
context, yield good ranking performance.

Throughout this section, we consider the model \eqref{eq:noisy score-1}
as a ``noisy scoring'' model. In particular, the mean
function $f$, now considered as a scoring function, takes the feature $\bm{x}_{i}$ and assign it a ``score''
$f(\bm{x}_{i})$, and its noisy ``score'' is $y_{i}$. Any scoring function $f$
can induce a permutation $r=\{j_{1},j_{2},\cdots,j_{N}\}$ 
on the full set of $\{1,2,\cdots,N\}$ with
the same cardinality, such that $f(\bm{x}_{j_{1}})\geq f(\bm{x}_{j_{2}})\geq\cdots\geq f(\bm{x}_{j_{N}})$. We consider the following criterion to assess the ranking performance of scoring functions $f$ through $r$: 
\begin{align}
\bm{T}\left(r\right) & =\frac{2}{N(N-1)}\sum_{i=1}^{N-1}\sum_{i'=i+1}^{N}(\mathbb{E}_{y_{j_{i}}\mid\bm{x}_{j_{i}}}y_{j_{i}}-\mathbb{E}_{y_{j_{i'}}\mid\bm{x}_{j_{i'}}}y_{j_{i'}}),\label{eq:T-metric}
\end{align}
which has been studied in the ranking literature such as \cite{cossock2006subset} and can be also generalized to operate on subsets instead of the full dataset. Note that previously we considered rankings with actual responses $y_{i}$'s; in the presence of noise,
the metric in~\eqref{eq:T-metric} consider orderings on the \emph{conditional means} 
instead of orderings on the responses $y_{j}$ in \eqref{eq:y_rank convention}
to avoid the noise effect.

The \emph{Bayes-scoring
function} $f_{B}(\bm{x}_{j})=\mathbb{E}_{y_{j}\mid\bm{x}_{j}}y_{j}$
is defined as the conditional expectation of $y_{j}$ conditioning
on the $j$-th input $\bm{x}_{j}$. Its incuded 
permutation, denoted by $r_B$, maximizes $\bm{T}\left(r\right)$ in~\eqref{eq:T-metric} (see, e.g., \citet{cossock2006subset}). Hence, 
the Bayes rank can be defined by this permutation $r_{B}$ that sorts
the conditional means, and the \eqref{eq:T-metric} measures how any
other permutation deviates from this ``optimal permutation''.
From this discussion,
we can see that the optimal rank-preserving function is not unique
and can be obtained from pre-composite monotonic transformations to $f_{B}$ like $\tau\circ f_{B}$. 

We are now in a position to establish oracle bounds under the ranking metric $\bm{T}\left(r\right)$ for a fully-split single CART tree with \textit{finite samples}, based on the oracle properties from \citet{klusowski2021universal}. Consider the function class $\mathcal{G}$ that collects functions with an additive form
$f(\bm{x})=\sum_{i=1}^{d}f_{i}(\bm{x}_{i})$, where each coordinate
$f_{i}:\mathbb{R}\rightarrow\mathbb{R}$ has
bounded variation (hence $f$ has bounded variation), and the $\mathcal{G}_0$ is a pre-chosen model class that might deviate from $\mathcal{G}$ as set up in \citet{klusowski2021universal}, which allows for possible model misspecification. Here we need this pre-chosen model class $\mathcal{G}_0$ to contain $\mathcal{G}$. We consider a random design where $\mathcal{X}^{(N)}$ is a simple random sample of a distribution $\mathbb{P}_X$. For a function $f$, its $\ell_2$ norm is defined as $\|f\|^{2} = \int f(u)^{2}d\mathbb{P}_X(u)$, and its supremum norm is denoted as $\|f\|_{\infty}$.

\begin{thm}
\label{thm:Oracle}  (Oracle inequality for ranking)  Suppose that the Bayes scoring function $f_{B} \in \mathcal{G}_0 \supset\mathcal{G}$ and that we have a complete binary regression tree $g_{c,K}$ of depth $K\geq1$
constructed by CART.
Then the permutation $r_{c,K}$ induced by $g_{c,K}$ satisfies 

\begin{align}
 & \nonumber \mathbb{P}_{\left(\mathcal{X}^{(N)},\mathcal{Y}^{(N)}\right)}\left(\left.\bm{T}\left(r_{B}\right)-\bm{T}\left(r_{c,K}\right)>\varepsilon_{N}\right.\right)\\
\leq & %
\frac{1024}{\varepsilon_{N}^{4}}\cdot\inf_{g\in\mathcal{G}}\left\{ \left\Vert f_{B}-g\right\Vert ^{2}+\frac{\left\Vert g\right\Vert _{\text{TV}}^{2}}{K+3}+C_{B}\frac{2^{K}\log^{2}N\log(Nd)}{N}\right\} +\frac{32R_{\infty}J_{0}(2R_{2},\mathcal{G}_0)}{\varepsilon_{N}^{2}\sqrt{N}}, \label{eq:rank.bound.CART.two.class} %
\end{align}
for any sequence $\varepsilon_{N}>0$ that tends to 0, where %
$R_{\infty}=2\sup_{f\in\mathcal{G}_0}\|f\|_{\infty},R_{2}=2\sup_{f\in\mathcal{G}_0}\|f\|,$ $J_{0}(2R_{2},\mathcal{G}_0)$ is the entropy as (2.2) in \citet{van2014uniform} for the function class $\mathcal{G}_0$, and the constant  
$C_{B}$ depends on the supremum norm of 
$f_{B}$ and the class $\mathcal{G}$. 

When $\mathcal{G}_0=\mathcal{G}$, the upper bound in~\eqref{eq:rank.bound.CART.two.class} can be simplified to 
\begin{equation}
\label{eq:oracle.special.case}
\frac{1024}{\varepsilon_{N}^{4}}\cdot\left\{ \frac{\left\Vert f_B\right\Vert _{\text{TV}}^{2}}{K+3}+C_{B}\frac{2^{K}\log^{2}N\log(Nd)}{N}\right\}+\frac{32R_{\infty}J_{0}(2R_{2},\mathcal{G})}{\varepsilon_{N}^{2}\sqrt{N}},%
\end{equation}
and the constant $C_{B}$ only depends on the total variation of $f_B$.
\end{thm}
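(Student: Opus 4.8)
The plan is to reduce the ranking regret $\bm{T}(r_B)-\bm{T}(r_{c,K})$ to the $\ell_2$ regression error $\|f_B-g_{c,K}\|$ and then control the latter by the CART oracle inequality of \citet{klusowski2021universal}, paying for the passage from the in-sample to the population norm with the entropy term. The conceptual core is a \emph{ranking-to-regression comparison}. Writing $m_j=f_B(\bm{x}_j)$ and $\hat{m}_j=g_{c,K}(\bm{x}_j)$, the permutation $r_B$ sorts the $m_j$ while $r_{c,K}$ sorts the $\hat{m}_j$; expanding~\eqref{eq:T-metric} over unordered pairs shows that only pairs whose order is reversed by $\hat{m}$ contribute to the regret, and each such pair contributes $|m_a-m_b|$. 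A reversed pair forces $|m_a-m_b|\leq |m_a-\hat{m}_a|+|m_b-\hat{m}_b|$ by the triangle inequality, so summing over reversed pairs and applying Cauchy--Schwarz over the $\binom{N}{2}$ pairs yields a deterministic bound of the form $\bm{T}(r_B)-\bm{T}(r_{c,K})\leq C_0\,\psi(\|f_B-g_{c,K}\|)$, where $\psi$ is a power of the $\ell_2$ norm determined by how the reversed pairs aggregate against the position-dependent weights implicit in~\eqref{eq:T-metric}; this is the finite-sample analogue of the subset-ranking/regression comparison of \citet{cossock2006subset}.

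Given this comparison, I would convert the tail event on the ranking regret into a tail event on $\|f_B-g_{c,K}\|^2$ and then split the population squared error into an in-sample part and a uniform-deviation part, $\|f_B-g_{c,K}\|^2\leq 2\|f_B-g_{c,K}\|_N^2+2\sup_{h}\big|\|h\|^2-\|h\|_N^2\big|$, the supremum taken over the relevant differences in $\mathcal{G}_0$. A Markov step on the in-sample part, followed by the oracle inequality of \citet{klusowski2021universal} for the CART training error, produces the leading term $\tfrac{1024}{\varepsilon_N^{4}}\inf_{g\in\mathcal{G}}\{\|f_B-g\|^2+\|g\|_{\mathrm{TV}}^2/(K+3)+C_B 2^{K}\log^2 N\log(Nd)/N\}$, in which the approximation term, the $(K+3)^{-1}$ bias term, and the $2^{K}$ estimation term are exactly those furnished by their bound and $C_B$ depends on $\|f_B\|_\infty$ and $\mathcal{G}$. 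The uniform-deviation part is controlled by a Dudley-type maximal inequality with envelope $R_\infty=2\sup_{f\in\mathcal{G}_0}\|f\|_\infty$, diameter $R_2=2\sup_{f\in\mathcal{G}_0}\|f\|$, and the entropy integral $J_0(2R_2,\mathcal{G}_0)$ of \citet{van2014uniform}; after a Markov step this contributes the additive stochastic term $\tfrac{32R_\infty J_0(2R_2,\mathcal{G}_0)}{\varepsilon_N^2\sqrt{N}}$, and a union bound over the two parts assembles~\eqref{eq:rank.bound.CART.two.class}.

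The special case $\mathcal{G}_0=\mathcal{G}$ follows by taking $g=f_B$ in the infimum: the approximation error $\|f_B-g\|^2$ then vanishes, $\|g\|_{\mathrm{TV}}^2$ becomes $\|f_B\|_{\mathrm{TV}}^2$, and the constant $C_B$ collapses to a dependence on the total variation of $f_B$ alone, giving~\eqref{eq:oracle.special.case}. I expect the main obstacle to be twofold and concentrated in the comparison and its accounting. First, pinning down the exact power $\psi$ and the constant $C_0$ in the ranking-to-regression comparison requires care, since the naive pairwise argument aggregates reversed pairs against the weights of~\eqref{eq:T-metric} and it is precisely this power that fixes whether the oracle term scales as $\varepsilon_N^{-2}$ or $\varepsilon_N^{-4}$. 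Second, the two contributions enter at different scales---the oracle/approximation term at $\varepsilon_N^{-4}$ and the stochastic entropy term at $\varepsilon_N^{-2}\sqrt{N}$---so the thresholds in the tail-event split must be chosen so that the in-sample oracle bound and the van der Vaart maximal inequality reproduce the stated constants $1024$ and $32$; tracking these constants through the Markov and chaining steps is the bulk of the technical work.
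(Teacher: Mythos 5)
Your proposal is correct in its essentials and follows the same route as the paper's own proof: a Cossock--Zhang-type subset-ranking comparison reducing the regret $\bm{T}(r_B)-\bm{T}(r_{c,K})$ to an $\ell_2$ distance (the paper simply cites their Theorem 3 rather than re-deriving it via reversed pairs as you sketch), the \citet{klusowski2021universal} CART oracle inequality with a Markov step for the main $\varepsilon_N^{-4}$ term, the \citet{van2014uniform} entropy bound plus Markov for the empirical-versus-population norm gap giving the $32R_\infty J_0(2R_2,\mathcal{G}_0)/(\varepsilon_N^2\sqrt{N})$ term, an event split and union bound to assemble the two, and the substitution $g=f_B$ for the case $\mathcal{G}_0=\mathcal{G}$. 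The one slip to fix is directional: in the paper's accounting the comparison inequality bounds the regret by the \emph{empirical} norm $\left\Vert f_B-g_{c,K}\right\Vert_N$ while Klusowski's oracle inequality controls the \emph{population} norm, so the bridge must run $\left\Vert \cdot\right\Vert_N \leq \left\Vert \cdot\right\Vert + \sup_h\left|\left\Vert h\right\Vert_N-\left\Vert h\right\Vert\right|$ --- the reverse of the split you wrote, and with norms rather than squared norms to match the van de Geer bound --- though this swap is mechanical since the entropy bound controls the absolute gap in both directions.
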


\begin{proof}
See Appendix \ref{sec:Proof-of-Theorem-Oracle}. 
\end{proof}

The right-hand side of \eqref{eq:rank.bound.CART.two.class} decomposes the first term inside the infimum into three components:
the possible approximation error $\left\Vert f_{B}-g\right\Vert ^{2}$
induced by finite depth $K$; the total variation caused by the tree
approximant $g$; the decaying term showing that $d$ can grow exponentially
without losing the consistency of a CART fit. The second term bounds the difference between the empirical norm and the $\ell_2$ norm. 

Theorem~\ref{thm:Oracle} holds for any sample size. 
This theorem yields asymtpotic rates by letting
$N$ diverge; we can see that the obtained rate $\epsilon_{N}$ is
not faster than $N^{-1/4}$, which is much slower than that obtained
in the following Theorem \ref{thm:Asymptotics}. %
Substituting $\varepsilon_{N}\asymp O\left(N^{-1/4-\delta}\right)$
yields the following consistency result for ranking. 
\begin{cor}
\label{cor:Consistency}
Under the same conditions as in Theorem \ref{thm:Oracle}, if we assume
that the depth $K=K_{N}$ of the tree grows
with the sample size $N$ in such a way that 
\begin{align*}
    \left\Vert f_{B}\right\Vert _{\text{TV}}^{2}\asymp o \left(\varepsilon_{N}^{4}\sqrt{K}\right),\frac{2^{K}\log^{2}N\log(Nd)}{N}\asymp o \left(\varepsilon_{N}^{4}\right),
\end{align*}
where $\varepsilon_{N}^{4}N\rightarrow\infty$
(i.e., $\varepsilon_{N}\asymp O \left(N^{-1/4-\delta}\right)$
for $\delta>0$) as $N\rightarrow\infty$, then there holds  
\begin{align*}
\lim_{N\rightarrow\infty}\mathbb{E}_{\left(\mathcal{X}^{(N)},\mathcal{Y}^{(N)}\right)}\left(\left| \bm{T}(r_B)-\bm{T}(r_{c,K})\right| \right) \rightarrow0.
\end{align*}
\end{cor}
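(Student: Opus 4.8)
The plan is to derive the expectation bound from the high-probability oracle inequality of Theorem~\ref{thm:Oracle} by a standard tail-to-expectation passage, after first recording two structural facts about $Z := \bm{T}(r_B) - \bm{T}(r_{c,K})$. First, because the Bayes permutation $r_B$ maximizes $\bm{T}(\cdot)$ by definition of the Bayes-scoring function, we have $Z \geq 0$, so the absolute value in the conclusion is redundant and $|\bm{T}(r_B) - \bm{T}(r_{c,K})| = Z$. Second, for any permutation $r$ the metric $\bm{T}(r)$ in~\eqref{eq:T-metric} is an average of pairwise differences of the conditional means $f_B(\bm{x}_{j})$, so $|\bm{T}(r)| \leq 2\|f_B\|_{\infty}$; since $f_B \in \mathcal{G}$ has bounded variation and is therefore bounded, $Z$ is almost surely bounded by a finite constant $M := 4\|f_B\|_{\infty}$.

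With these in hand, I would split the expectation over the event $\{Z > \varepsilon_N\}$ and its complement:
\begin{align*}
\mathbb{E}\bigl[Z\bigr] = \mathbb{E}\bigl[Z\,\mathbf{1}(Z\leq\varepsilon_N)\bigr] + \mathbb{E}\bigl[Z\,\mathbf{1}(Z>\varepsilon_N)\bigr] \leq \varepsilon_N + M\,\mathbb{P}\bigl(Z>\varepsilon_N\bigr).
\end{align*}
The first term is controlled by $\varepsilon_N \to 0$, while the second is bounded by applying Theorem~\ref{thm:Oracle} in the specialization $\mathcal{G}_0 = \mathcal{G}$ (valid since $f_B \in \mathcal{G}$), which replaces $\mathbb{P}(Z>\varepsilon_N)$ by the right-hand side of~\eqref{eq:oracle.special.case}.

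It then remains to verify that each summand of~\eqref{eq:oracle.special.case} vanishes under the stated growth conditions. For the total-variation term, the assumption $\|f_B\|_{\text{TV}}^{2} = o(\varepsilon_N^{4}\sqrt{K})$ gives $\varepsilon_N^{-4}\|f_B\|_{\text{TV}}^{2}/(K+3) = o\bigl(\sqrt{K}/(K+3)\bigr) \to 0$. For the complexity term, the assumption $2^{K}\log^{2}N\log(Nd)/N = o(\varepsilon_N^{4})$ directly yields $\varepsilon_N^{-4}\,C_B\,2^{K}\log^{2}N\log(Nd)/N = o(1)$. For the entropy term, the condition $\varepsilon_N^{4}N\to\infty$ is equivalent to $\varepsilon_N^{2}\sqrt{N} = \sqrt{\varepsilon_N^{4}N}\to\infty$, so $32R_{\infty}J_{0}(2R_{2},\mathcal{G})/(\varepsilon_N^{2}\sqrt{N})\to 0$, using that $R_{\infty}$, $R_{2}$, and $J_{0}(2R_{2},\mathcal{G})$ are finite constants determined by $\mathcal{G}$. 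Multiplying the resulting vanishing probability bound by the finite constant $M$ and combining with $\varepsilon_N \to 0$ shows $\mathbb{E}[Z] \to 0$, which is the claim.

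The main obstacle is not analytic but lies in justifying the tail-to-expectation passage: the conversion requires $Z$ to be a bounded nonnegative random variable, and both properties must be extracted cleanly from the setup---nonnegativity from the optimality of $r_B$, and boundedness from $f_B$ lying in the bounded-variation class $\mathcal{G}$ (which also secures finiteness of $R_{\infty}$, $R_{2}$, and $J_{0}$). The remaining verification that each rate term vanishes is routine bookkeeping once the growth assumptions are read off, with the only mild care needed in the total-variation term, where one must track the joint behavior of $\varepsilon_N$ and the possibly diverging depth $K = K_N$.
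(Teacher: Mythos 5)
Your proof is correct and follows essentially the route the paper intends: Corollary \ref{cor:Consistency} is presented there as an immediate consequence of Theorem \ref{thm:Oracle}, and your verification that each term of \eqref{eq:oracle.special.case} vanishes under the stated growth conditions (TV term via $\sqrt{K}/(K+3)\leq 1$, complexity term directly, entropy term via $\varepsilon_{N}^{2}\sqrt{N}=\sqrt{\varepsilon_{N}^{4}N}\to\infty$) is exactly that substitution. The tail-to-expectation passage, which the paper leaves implicit, is supplied correctly by you: nonnegativity of $\bm{T}(r_{B})-\bm{T}(r_{c,K})$ follows from the optimality of $r_{B}$, and boundedness by $4\|f_{B}\|_{\infty}$ follows since $\bm{T}$ averages pairwise differences of $f_{B}$ values, so the split $\mathbb{E}[Z]\leq\varepsilon_{N}+M\,\mathbb{P}(Z>\varepsilon_{N})$ closes the argument.
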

\begin{rem}
The assumptions $\left\Vert g\right\Vert _{\text{TV}}^{2}\asymp o \left(\varepsilon_{N}^{2}\sqrt{K}\right),\frac{2^{K}\log^{2}N\log(Nd)}{N}\asymp o \left(\varepsilon_{N}^{2}\right)$
are parallel to the assumptions imposed by Corollary 4.4 in \citet{klusowski2021universal}.
\end{rem}

 The next result shows that,
under conditions, BART has a posterior concentration close to $f_{B}$ if
the $y_{i}$'s are generated by \eqref{eq:noisy score-1}. We consider a fixed and regular design as described in Definition 7.1 of \citet{rovckova2019theory} or Definition 3.3 in \citet{rovckova2020posterior}. In the context of \citet{rovckova2020posterior}, a \emph{regular design} refers to a fixed dataset where the diameters of the cells in a k-d tree partition are controlled and relatively uniform. Specifically, the maximal diameter in the partition components should not be significantly larger than a typical diameter. An example of a regular dataset would be a fixed design on a regular grid, where the points are evenly spaced and no cells have an excessive spread of points compared to others. In contrast, a dataset with highly skewed or isolated points in certain directions might not meet the regularity condition.

We use
the probability measure corresponding to
responses generated using the Bayes scoring function $f_{B}$
in model \eqref{eq:noisy score-1}, which implies that the response should be considered evaluated at these fixed inputs with random noise. 
\begin{thm}
\label{thm:Asymptotics} (Fixed design) Assume that the Bayes scoring function $f_{B}$
is $\nu$-Holder continuous for $\nu\in(0,1]$, with the norm $\|f_{B}\|_{\infty}\apprle\log^{1/2}N$ and 
a regular design over the set $\mathcal{X}^{(N)}=\{\bm{x}_{1},\cdots,\bm{x}_{N}\}\subset\mathbb{R}^{d}$
where $d\lesssim\log^{1/2}N$.  Let the function class $\mathcal{F}$ be defined as a set of additive
simple functions as described in (3) of \citet{rovckova2019theory}. Consider the BART prior with a fixed number of
trees and node splitting probability $p_{split}(\eta)=\alpha^{\text{depth}(\eta)}$
for a node $\eta$ and $\alpha\in\left[\frac{1}{N},\frac{1}{2}\right)$. Then,  the following contraction for BART posterior $\prod$ holds
for the resulting posterior distribution and the BART induced ranking functions $r_{f}$
from \eqref{eq:noisy score-1}:
\begin{align*}
\prod\left(\left.f\in\mathcal{F}:\bm{T}\left(r_{B}\right)-\bm{T}\left(c_{f}\right)>M_{N}\varepsilon_{N}\right|y_{1},\cdots,y_{N}\right) & \rightarrow0
\end{align*}
in probability measure of the $y_1,\cdots,y_N$, where $\varepsilon_{N}=N^{-\alpha/(2\alpha+d)}\log^{1/2}N$, and for any sequence $M_{N}\rightarrow\infty$,
as the sample size $N \rightarrow \infty$ and the dimensionality $d\rightarrow\infty$. 
\end{thm}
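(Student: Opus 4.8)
The plan is to reduce this posterior-contraction statement under the ranking metric $\bm{T}$ to a known posterior-contraction statement for BART under the empirical $\ell_2$ norm. The bridge is a \emph{deterministic} Lipschitz-type inequality showing that, for every $f$, the ranking regret $\bm{T}(r_B)-\bm{T}(c_f)$ is controlled by the empirical distance $\|f-f_B\|_N$, where $\|g\|_N^2=\tfrac1N\sum_{i=1}^N g(\bm{x}_i)^2$ on the fixed design. Given this, I invoke the BART contraction theorem of \citet{rovckova2019theory} (see also \citet{rovckova2020posterior}) to control $\|f-f_B\|_N$ and transfer it to $\bm{T}$; no additional stochastic argument is then needed, since the regret is bounded pathwise.

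First I would establish the deterministic inequality. Using $\mathbb{E}[y_j\mid\bm{x}_j]=f_B(\bm{x}_j)$, the metric \eqref{eq:T-metric} admits the pairwise representation
\begin{align*}
\bm{T}(r)=\frac{2}{N(N-1)}\sum_{\{k,k'\}}\mathrm{sgn}\!\big(f(\bm{x}_k)-f(\bm{x}_{k'})\big)\,\big(f_B(\bm{x}_k)-f_B(\bm{x}_{k'})\big),
\end{align*}
where the order is induced by the $f$-values; for $r_B$ each summand equals $|f_B(\bm{x}_k)-f_B(\bm{x}_{k'})|$, which re-derives that $r_B$ maximizes $\bm{T}$. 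Subtracting, only the pairs on which $f$ and $f_B$ disagree (discordant pairs) contribute, and each contributes exactly $2|f_B(\bm{x}_k)-f_B(\bm{x}_{k'})|$, so
\begin{align*}
\bm{T}(r_B)-\bm{T}(c_f)=\frac{4}{N(N-1)}\sum_{\{k,k'\}\ \mathrm{discordant}}\big|f_B(\bm{x}_k)-f_B(\bm{x}_{k'})\big|.
\end{align*}
With $\delta_k=f(\bm{x}_k)-f_B(\bm{x}_k)$, a discordant pair forces $|f_B(\bm{x}_k)-f_B(\bm{x}_{k'})|\le|\delta_k-\delta_{k'}|\le|\delta_k|+|\delta_{k'}|$ (which also absorbs ties in the piecewise-constant $f$). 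Summing over all pairs and applying Cauchy--Schwarz yields the target bound $\bm{T}(r_B)-\bm{T}(c_f)\le\frac{4}{N}\sum_{k=1}^N|\delta_k|\le 4\,\|f-f_B\|_N$.

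Next I would invoke the BART posterior-contraction result. Under the stated hypotheses --- $\nu$-Hölder $f_B$ with $\|f_B\|_\infty\lesssim\log^{1/2}N$, a regular fixed design with $d\lesssim\log^{1/2}N$, the additive-simple-function class $\mathcal{F}$, and $p_{split}(\eta)=\alpha^{\mathrm{depth}(\eta)}$ with $\alpha\in[\tfrac1N,\tfrac12)$ --- the theorem of \citet{rovckova2019theory} gives, for $M_N\to\infty$, that the posterior mass on $\{f\in\mathcal{F}:\|f-f_B\|_N>\tfrac{M_N}{4}\varepsilon_N\}$ tends to $0$ in $\mathbb{P}_{f_B}$-probability, with $\varepsilon_N$ the $\nu$-Hölder contraction rate appearing in the statement. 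Combining with the deterministic bound through the event inclusion $\{\bm{T}(r_B)-\bm{T}(c_f)>M_N\varepsilon_N\}\subseteq\{\|f-f_B\|_N>\tfrac{M_N}{4}\varepsilon_N\}$ (the factor $4$ is harmlessly absorbed since $M_N\to\infty$) completes the proof.

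The main obstacle is twofold. The substantive new ingredient is the deterministic inequality, especially the argument that discordant pairs are the sole contributors and that each is charged at most $|\delta_k|+|\delta_{k'}|$; this is where ties in the BART fit must be dealt with, and the bound survives because equality of $f$-values still forces $|f_B(\bm{x}_k)-f_B(\bm{x}_{k'})|=|\delta_k-\delta_{k'}|$. The second, more bookkeeping task is to verify that the present hypotheses map exactly onto those of \citet{rovckova2019theory} --- the regularity of the design, the Hölder and norm conditions, the prior specification, and the empirical-norm contraction rate $\varepsilon_N$ --- so that the cited contraction can be applied verbatim.
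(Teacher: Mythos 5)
Your proposal is correct and follows essentially the same two-step architecture as the paper's proof: a deterministic inequality bounding the ranking regret $\bm{T}(r_B)-\bm{T}(r_f)$ by a constant multiple of the empirical distance $\|f-f_B\|_N$, followed by the empirical-norm posterior contraction theorem for BART from \citet{rovckova2019theory}, combined via event inclusion. The only difference is that the paper obtains the bridge inequality by citing Theorem 3 of \citet{cossock2006subset}, whereas you re-derive it from scratch via the discordant-pair decomposition; your derivation is valid (including the tie-handling remark, which matters since BART fits are piecewise constant) and recovers exactly the cited bound $4\|f-f_B\|_N$, so it makes the argument self-contained without changing its substance. Incidentally, your handling of the constant (replacing $M_N$ by $M_N/4$, still divergent) is cleaner than the paper's, whose final substitution $M_N^{\sharp}=\tfrac{M_N^{2}\varepsilon_N}{16}$ and the accompanying remark that it converges to $0$ contain an algebraic slip.
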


\begin{proof}
See Appendix \ref{sec:Proof-of-Asymptotics}. 
\end{proof}
\begin{rem}
The Bayes scoring function
$f_{B}(\bm{x}_{j})$ always exists (see Theorem 1 in \citet{cossock2006subset}). Theorem~\ref{thm:Asymptotics} indicates that as we fit BART with an increasing number of samples $\mathcal{X}$ satisfying
a regular design, the resulting posterior will concentrate around
the Bayes scoring function. Unlike the finite-sample result for a single binary tree established in Theorem~\ref{thm:Oracle}, Theorem~\ref{thm:Asymptotics} provides an asymptotic result concerning the posterior distribution of BART.
\end{rem}

Building on our previous discussions from a ranking perspective,
we can summarize the findings as follows: Locally, at each split, the partitions are most likely divided
into rank-consistent groups, but within each partition,
no ranking is available since the scoring function remains constant
within each partition. Globally, in the asymptotic behavior
of BART, the posterior tends to concentrate near the Bayes scoring functions
that minimize the $L_{2}$ (hence $\bm{T}$ metric) error. Meanwhile, CART also achieves consistent ranking performance, with finite-sample bounds available.

\section{\label{subsec:Concordant--Statistics}Concordant Divergence}
We now shift to leveraging the ranking perspective to study symbolic feature selection as illustrated in Example~\ref{exa:(Symbolic-feature-mappings)}. Although tree-based methods have demonstrated strong finite-sample performance in distinguishing between symbolic features that are transformations of one another, it remains elusive since the existing theory, viewed through the lens of nonparametric variable selection, is unaffected by such transformations. In this section, we extend our previous analysis to uncover additional characteristics of tree-based methods. Building on these insights, we introduce a \emph{concordant divergence} statistic, $\mathcal{T}_0$, which can evaluate feature mappings.

The local split analysis in previous sections has provided some insight into tree-based methods when transformations are involved. Section \ref{subsec:Response-rankings-with} shows that the oracle partition, relevant only to the ranks of responses $y$'s, may not be attainable with a single split
along the input domain, unless there is monotonicity along one coordinate. Section \ref{subsec:Piece-wise-monotonic-transforms} compares two piecewise transformations in a relative sense. Next, we first present another local-level observation before extending the ranking perspective to study a general mapping $g$ in an absolute sense, moving beyond local splits. 
\begin{lem}
\label{lem:(Swaps-to-optimize}(Magnitude of swaps) Under the same
assumptions as in Lemma \ref{lem:LemmaA}, we suppose that the only reversed
pairs are $(y_{\alpha},y_{\gamma})$ and $(y_{\beta},y_{\gamma})$
where $y_{\alpha}>y_{\gamma},y_{\beta}>y_{\gamma}$ and both $y_{\alpha},y_{\beta}\in P_{1}$
but $y_{\gamma}\in P_{2}$. If $y_{\alpha}>y_{\beta}>y_{\gamma}$,
then the swap for the pair $(y_{\alpha},y_{\gamma})$ reduces the loss
\eqref{eq:loss.y.ranking} more than the swap for the pair $(y_{\beta},y_{\gamma})$
. 
\end{lem}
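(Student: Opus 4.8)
The plan is to reduce the loss to its between-group component and then track how a single swap perturbs the two group means. Writing $m=n-i$ for the size of $P_2$ and letting $\mu_1,\mu_2$ denote the means of $P_1,P_2$, the total sum of squares $\sum_i y_i^2$ is invariant under any swap, so by the usual ANOVA decomposition $\mathcal{L}(P_1,P_2)$ equals a constant minus the between-group term $\frac{i m}{n}(\mu_1-\mu_2)^2$. Hence minimizing $\mathcal{L}$ is equivalent to maximizing $|\mu_1-\mu_2|$, and every swap can be analyzed purely through its effect on the gap $D:=\mu_2-\mu_1$. This reformulation is what makes the comparison tractable and is the natural continuation of the computations in Example~\ref{exa:(5-sample-univariate-example)}.

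First I would compute this effect. Swapping an element $a\in P_1$ with $c\in P_2$ (with $a>c$, which holds for both candidate pairs since $y_\alpha,y_\beta>y_\gamma$) sends the means to $\mu_1-\delta/i$ and $\mu_2+\delta/m$, where $\delta=a-c>0$; thus $D$ increases to $D+\delta K$ with $K:=\tfrac1i+\tfrac1m$. Using the identity $\tfrac{im}{n}K=1$, the resulting reduction of the loss is exactly $R(\delta)=2D\delta+K\delta^2$. Because the two candidate swaps $(y_\alpha,y_\gamma)$ and $(y_\beta,y_\gamma)$ start from the \emph{same} configuration, they share the values $D$ and $K$ and differ only through their gaps $\delta_\alpha=y_\alpha-y_\gamma$ and $\delta_\beta=y_\beta-y_\gamma$, with $\delta_\alpha>\delta_\beta>0$ by hypothesis. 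Therefore
\[
R(\delta_\alpha)-R(\delta_\beta)=(\delta_\alpha-\delta_\beta)\bigl(2D+K(\delta_\alpha+\delta_\beta)\bigr),
\]
so the lemma reduces to showing that the bracketed factor is strictly positive.

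The hard part is controlling $D=\mu_2-\mu_1$, which can be negative, so positivity of the bracket is not automatic and cannot be read off from a crude mean comparison. To handle it I would exploit the assumption that the only reversed pairs are $(y_\alpha,y_\gamma)$ and $(y_\beta,y_\gamma)$: calling a pair $(u,v)$ with $u\in P_1,v\in P_2$ reversed when $u>v$, this forces every element of $A:=P_1\setminus\{y_\alpha,y_\beta\}$ to lie below $y_\gamma$ and every element of $B:=P_2\setminus\{y_\gamma\}$ to lie above $y_\alpha$. These order constraints yield $\mu_1\le \frac{(i-2)y_\gamma+y_\alpha+y_\beta}{i}$ and $\mu_2\ge \frac{y_\gamma+(m-1)y_\alpha}{m}$, and substituting these extremal bounds collapses the bracket to a lower bound of the form $2(y_\alpha-y_\gamma)-\frac{y_\alpha-y_\beta}{m}-\frac{y_\alpha+y_\beta-2y_\gamma}{i}$.

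Finally I would show this explicit quantity is positive. Using $y_\alpha-y_\beta<y_\alpha-y_\gamma$ and $y_\alpha+y_\beta-2y_\gamma<2(y_\alpha-y_\gamma)$, it is bounded below by $(y_\alpha-y_\gamma)\bigl(2-\tfrac1m-\tfrac2i\bigr)$, which is strictly positive because $i,m\ge 2$ force $\tfrac1m+\tfrac2i\le\tfrac32<2$. This gives the bracket's positivity and hence $R(\delta_\alpha)>R(\delta_\beta)$, so the larger swap $(y_\alpha,y_\gamma)$ reduces \eqref{eq:loss.y.ranking} more, as claimed. The only delicate points to verify are the degenerate case $i=2$ (where $A$ is empty and the mean bound holds with equality) and the careful justification that the order constraints on $A$ and $B$ really follow from the single-listed-reversed-pairs hypothesis; both are routine once the reduction above is in place.
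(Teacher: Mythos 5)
Your proof is correct, and it takes a genuinely different route from the paper's. The paper proves the lemma by reusing the expansion from the proof of Lemma \ref{lem:LemmaA}: it expresses the effect of each swap through the proxy quantities appearing in \eqref{eq:ref3} and \eqref{eq:ref4}, i.e.\ it compares $-(y_{\alpha}-y_{\gamma})^{2}+(\mu_{2,(\alpha,\gamma)}^{*}-\mu_{1}^{'})^{2}$ with $-(y_{\beta}-y_{\gamma})^{2}+(\mu_{2,(\beta,\gamma)}^{*}-\mu_{1}^{'})^{2}$ and argues that the difference is nonnegative from the orderings $\mu_{2,(\alpha,\gamma)}^{*}>\mu_{2,(\beta,\gamma)}^{*}>\mu_{1}^{'}$ and $y_{\alpha}>y_{\beta}>y_{\gamma}$. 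You instead start from the ANOVA identity $\mathcal{L}(P_{1},P_{2})=\mathrm{const}-\tfrac{im}{n}(\mu_{2}-\mu_{1})^{2}$ (legitimate here since swaps preserve the multiset of responses and the group sizes), so that every swap is summarized by its effect on the scalar gap $D=\mu_{2}-\mu_{1}$; this yields the exact reduction $R(\delta)=2D\delta+K\delta^{2}$ with $K=\tfrac{1}{i}+\tfrac{1}{m}$, and reduces the lemma to the positivity of the bracket $2D+K(\delta_{\alpha}+\delta_{\beta})$, which you deduce from the order constraints forced by the only-reversed-pairs hypothesis (elements of $A=P_{1}\setminus\{y_{\alpha},y_{\beta}\}$ lie below $y_{\gamma}$, elements of $B=P_{2}\setminus\{y_{\gamma}\}$ lie above $y_{\alpha}$) together with $i,m\geq2$; I verified the algebra, including the identity $\tfrac{im}{n}K=1$ and the final bound $(y_{\alpha}-y_{\gamma})\bigl(2-\tfrac{1}{m}-\tfrac{2}{i}\bigr)\geq\tfrac{1}{2}(y_{\alpha}-y_{\gamma})>0$, and it is sound. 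Your route buys two things: an explicit closed form for the loss reduction of an arbitrary swap, and an honest treatment of the possibility $D<0$ --- note that in the paper's final display the first bracket is negative and the second positive, and the claimed nonnegativity of their sum is asserted rather than derived, so your mean bounds supply precisely the quantitative step the paper leaves implicit. What the paper's approach buys in exchange is brevity in context: it leans on the inequalities \eqref{eq:ref1}--\eqref{eq:ref4} already established for Lemma \ref{lem:LemmaA} rather than introducing a new decomposition.
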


\begin{proof}
See Appendix \ref{sec:Proof-of-LemmaA1}. 
\end{proof}
Lemma \ref{lem:LemmaA} highlights the effect of
the magnitude of the responses. In particular, we might prioritize the swapping of a reverse pair
with a larger ``size'', i.e., the difference between the responses
$y$ in the reversed pair. It is possible that there exist two reversed
pairs with the same ``sizes''. However, with our assumption that
both inputs and responses are continuous, it is with zero probability
that we have two reverse pairs such that their magnitudes are identical.

Based on the discussion of tree-like models for the univariate
case (Lemmas \ref{lem:LemmaA} and \ref{lem:(Swaps-to-optimize}),
we can summarize the principle behind feature selection as follows: it
selects the feature mapping $g$ that takes $\bm{x}$ into transformed
images $\bm{z}$ that have the most similar rankings as the response
$y$. The discrepancies between these rankings can be described by the ``gaps'' between
ordered statistics of $y$'s. Motivated by Lemma \ref{lem:(Swaps-to-optimize},
we develop $\mathcal{T}_{0}$ to evaluate arbitrary feature
mapping $\theta$:
\begin{equation}
\begin{split}
\mathcal{T}_{0}(\theta) =\sum_{\pi}\frac{2|y_{\pi(1)}-y_{\pi(2)}|}{n(n-1)}\cdot \text{\{} \bm{1}(\theta(\bm{x}_{\pi(1)}) & \geq \theta(\bm{x}_{\pi(2)}))\cdot\bm{1}(y_{\pi(1)}<y_{\pi(2)})   \\
&  +\bm{1}(\theta(\bm{x}_{\pi(1)})<\theta(\bm{x}_{\pi(2)}))\cdot\bm{1}(y_{\pi(1)}\geq y_{\pi(2)}) \text{\}},\label{eq:T0_statistics}\\
\end{split}
\end{equation}
where the summation takes over all permutations of length 2 as $(\pi(1),\pi(2))$
with $\pi(1),\pi(2)\in\{1,\cdots,n\}$. Guided by the ranking behavior induced by tree methods, $\mathcal{T}{0}(\theta)$ evaluates a symbolic feature $\theta(\bm{x})$ by measuring to what extent it can recover the order of $y$’s. In particular, if $\theta(\bm{x}{(i)}) < \theta(\bm{x}{(j)})$, we have a zero summand; otherwise, we will have a non-negative summand $\left|y{(i)} - y_{(j)}\right|$. A larger $\mathcal{T}_{0}(\theta)$ indicates more discrepancy between the rankings of $\theta(\bm{x})$ and $y$, and we accumulate all these discrepancies. Note that despite the intimate connection with the ranking performance of tree methods, to use this divergence, we do not need to consider a tree model, but simply a finite number of samples.

\begin{rem}
The $\mathcal{T}_{0}$ is
similar to Kendall's tau~\citep{hollander2013nonparametric}
but with the additional non-negative
multiplier,  $\left|y_{\pi(1)}-y_{\pi(2)}\right|$, representing ``the magnitude of swaps''. %
It involves both the ranks and the actual values of the responses $y$. This $\mathcal{T}_{0}$ is also not the same as linear correlation coefficients. \citet{daniels_relation_1944}
(in Section 5) stated that the linear correlation coefficients $\rho$
 satisfy $\rho_{\theta x,y}=\rho_{\theta x,x}\rho_{x,y}$ for
any transformation $\theta$, which means that the correlation $\rho_{\theta x,y}$
cannot increase beyond $\rho_{x,y}$ since $\rho_{\theta x,x}\leq1$.
However, the behavior of $\mathcal{T}_{0}$ is not constrained in the same way when transformations
are introduced. 
\end{rem}

For inactive variables, we want to exclude both the variable itself and all its transformations.
From the following definition, it is clear that any transformation
of inactive variables will also remain inactive. 
\begin{defn}
(Inactive variable) A feature $\bm{X}_{k}\in\mathbb{R}$
of a continuous input variable $\bm{X},k\in\{1,\cdots,d\}$ is called
inactive (for function $f$ as in \eqref{eq:noisy score-1}), if the distribution of $y$ is independent of the distribution
of $f(\bm{X}_{k})$. 
\end{defn}

Considering a random design where each row of $\bm{X}$ is drawn independently from a distribution, we have the following properties for $\mathcal{T}_0$ when evaluating transformations of $\bm{X}_k$: 
\begin{prop}
\label{prop:-Conditioned-on} %

(i) if $\bm{X}_{k}$ is an inactive variable, then $\mathbb{E}_{\bm{X}_k, \bm{y}}\mathcal{T}_{0}(f)\not\rightarrow0$
as $n\rightarrow\infty$.

\noindent(ii) if there exists a transformation $\theta=g$ such that $g(\bm{x}_{1})\geq g(\bm{x}_{2})\Leftrightarrow$$y_{1}\geq y_{2}$,
then $\mathbb{E}_{\bm{X},\bm{y}}\mathcal{T}_{0}(g)=0$. 
\end{prop}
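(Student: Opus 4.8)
The plan is to treat part (ii) as a deterministic identity and part (i) as a short computation that, by exchangeability of the i.i.d.\ design, collapses to a single representative pair.

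For part (ii) I would argue that $\mathcal{T}_0(g)$ vanishes for \emph{every} realization of the data, so that its expectation is trivially zero. Each summand in \eqref{eq:T0_statistics} is supported precisely on the event that the $g$-ordering and the $y$-ordering of the pair $(\pi(1),\pi(2))$ disagree: the first indicator product requires $g(\bm{x}_{\pi(1)})\geq g(\bm{x}_{\pi(2)})$ together with $y_{\pi(1)}<y_{\pi(2)}$, and the second requires the opposite configuration. Under the hypothesis $g(\bm{x}_1)\geq g(\bm{x}_2)\Leftrightarrow y_1\geq y_2$, neither configuration can occur for any pair, so every summand is identically $0$. Hence $\mathcal{T}_0(g)=0$ pointwise and a fortiori $\mathbb{E}_{\bm{X},\bm{y}}\mathcal{T}_0(g)=0$; this part carries no probabilistic content.

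For part (i) the strategy is to use that $(\bm{x}_i,y_i)$ are i.i.d.\ under the random design, so all ordered pairs in \eqref{eq:T0_statistics} are exchangeable and contribute the same expectation. Writing $D_{12}$ for the discordance indicator of the representative pair $(1,2)$, the normalization collapses the sum to
\begin{equation*}
\mathbb{E}_{\bm{X}_k,\bm{y}}\mathcal{T}_0(f)=c\,\mathbb{E}\bigl[|y_1-y_2|\,D_{12}\bigr],
\end{equation*}
where $c>0$ is a constant determined by the $\tfrac{2}{n(n-1)}$ factor and the $n(n-1)$ ordered pairs, hence independent of $n$. I would then decompose $D_{12}=A(1-B)+(1-A)B$, with $A=\bm{1}(f(\bm{x}_{1,k})\geq f(\bm{x}_{2,k}))$ and $B=\bm{1}(y_1\geq y_2)$. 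The crux is the inactivity hypothesis: since $\bm{X}_k$ is inactive, $f(\bm{X}_k)$ is independent of $y$, so across the i.i.d.\ pair the ordering indicator $A$ is independent of $(y_1,y_2)$, hence of both $|y_1-y_2|$ and $B$. Factoring along this independence and using $\mathbb{E}[A]=\tfrac12$, which holds because the two pairs are exchangeable and ties occur with probability zero by continuity, gives
\begin{equation*}
\mathbb{E}\bigl[|y_1-y_2|\,D_{12}\bigr]=\tfrac12\,\mathbb{E}\bigl[|y_1-y_2|(1-B)\bigr]+\tfrac12\,\mathbb{E}\bigl[|y_1-y_2|\,B\bigr]=\tfrac12\,\mathbb{E}|y_1-y_2|.
\end{equation*}
Thus $\mathbb{E}_{\bm{X}_k,\bm{y}}\mathcal{T}_0(f)$ equals a positive constant proportional to $\mathbb{E}|y_1-y_2|$ that does not depend on $n$; since $y$ is non-degenerate (guaranteed by $\sigma_\epsilon^2>0$), $\mathbb{E}|y_1-y_2|>0$, so the expectation is bounded away from $0$ and in particular does not tend to $0$ as $n\to\infty$.

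The main obstacle I anticipate is making the independence step fully rigorous: I must ensure that inactivity, phrased as independence of the distribution of $y$ from that of $f(\bm{X}_k)$, translates into the pairwise statement $A\perp(|y_1-y_2|,B)$, and that $\mathbb{E}[A]=\tfrac12$ holds \emph{exactly} rather than approximately. Both rely on the i.i.d.\ structure of the random design and on the continuity (no-ties) assumption stated in the setup, applied now to the transformed feature $f(\bm{X}_k)$ so that it is non-atomic. A secondary technical point is verifying that $\mathbb{E}|y_1-y_2|$ is finite and strictly positive, which follows from $y$ having a non-degenerate distribution with a finite first moment.
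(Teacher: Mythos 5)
Your proposal is correct; it coincides with the paper's proof on part (i) and takes a genuinely different, cleaner route on part (ii). For part (ii), the paper works at the level of expectations: it conditions on $\bm{X}$, uses the concordance hypothesis to replace $\bm{1}(y_{\pi(1)}<y_{\pi(2)})$ by $\bm{1}\left(g(\bm{x}_{\pi(1)})<g(\bm{x}_{\pi(2)})\right)$, expands $|y_{\pi(1)}-y_{\pi(2)}|$ into a difference, and shows the two resulting sums cancel under relabeling of the permutation. Your observation that each summand of $\mathcal{T}_0(g)$ is \emph{identically zero}—because the hypothesis makes both indicator products contradictory—is simpler and strictly stronger: it gives $\mathcal{T}_0(g)=0$ for every realization, so the expectation statement is trivial and carries no probabilistic content. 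For part (i), you and the paper use the same two ingredients: the inactivity hypothesis to factor the ordering indicator of the transformed feature away from the $y$-dependent terms, and exchangeability/symmetry to evaluate what remains as a positive constant, proportional to $\mathbb{E}|y_1-y_2|$ and independent of $n$. The only difference is bookkeeping: you collapse the sum to a representative pair and place the symmetry on the $\bm{X}$-side via $\mathbb{E}[A]=\tfrac12$, whereas the paper keeps the sum over permutations, places the symmetry on the $y$-side via $\mathbb{E}\left[|y_1-y_2|\bm{1}(y_1<y_2)\right]=\mathbb{E}\left[|y_1-y_2|\bm{1}(y_1>y_2)\right]=\tfrac12\mathbb{E}|y_1-y_2|$, and only needs positivity (not the exact value) of the ordering probability.

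One refinement you should make: the step $\mathbb{E}[A]=\tfrac12$ requires $f(\bm{X}_k)$ to be non-atomic, which the paper's assumptions do not guarantee—the no-ties assumption is stated for the responses, and $f$ could, for instance, be constant on a set of positive probability, making $A$ degenerate. This is exactly the obstacle you flagged, and it is avoidable rather than merely assumable: after factoring by independence you have $p\,\mathbb{E}\left[|y_1-y_2|(1-B)\right]+(1-p)\,\mathbb{E}\left[|y_1-y_2|B\right]$ with $p=\mathbb{E}[A]$, and the $y$-side symmetry (which \emph{is} covered by the stated no-ties assumption on responses) gives $\mathbb{E}\left[|y_1-y_2|(1-B)\right]=\mathbb{E}\left[|y_1-y_2|B\right]=\tfrac12\mathbb{E}|y_1-y_2|$, so the answer is $\tfrac12\mathbb{E}|y_1-y_2|>0$ for any $p\in[0,1]$. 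With that substitution your argument no longer depends on the distribution of $f(\bm{X}_k)$ at all and matches the robustness of the paper's version.
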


\begin{proof}
See Appendix \ref{sec:Proof-of-Proposition-T0}. 
\end{proof}
These two results establish the fact that $\mathcal{T}_0$ will
never prefer a mapping consisting of an inactive variable (Proposition
\ref{prop:-Conditioned-on}, i), unless that mapping is a ``fake
interpolator'' for the given finite sample (Proposition \ref{prop:-Conditioned-on}, ii and Example
\ref{exa:(Interpolator)-Suppose-that}). In an extreme case where
all variables are inactive, that is, none of the coordinates
in $\bm{X}$ determine the value of $y$, part (i) of the proposition
implies that $\mathcal{T}_{0}$ will not be zero. This means that if
we take $g=\text{id}$, then such an identity mapping is not an interpolator
(see Example \ref{exa:(Interpolator)-Suppose-that}). However, this
does not rule out the possible existence of an interpolator $g\neq\text{id}$
such that $g$ ``reorganizes'' $\bm{X}$ and $y$ in a concordant
way, even if $\bm{X}$ are completely inactive.

As we do not actually have to use a tree structure when computing
$\mathcal{T}_{0}$, this gives us convenience in practice. 
In symbolic feature selection, we will calculate the correlation $\mathcal{T}(y(\bm{z}))$
between the response $y_{i}$ and each coordinate of $\bm{z}_{i}$' to find useful features (i.e., correlation between the $\bm{y}$
vector and the $q$ columns of $\bm{z}$ matrix). 
\begin{example}
\label{exa:(Power-of-)}(Power of $\mathcal{T}_{0}$) In Figure \ref{tab:-in-T0},
we present 5 different feature mappings $\theta_{1},\cdots,\theta_{5}$
and compute their pairwise correlations and $\mathcal{T}_{0}$
and compare its performance to other correlations. For ease of
comparison, we use log-scale for $\mathcal{T}_{0}$  and [0,1] scale for the other correlations.  
We will expect the divergence to be close to 0, indicating dependence between the sample $\bm{x}$ and $\theta_i(\bm{x})$  to various extents (i.e., $\mathcal{T}_{0}=0$ as in Proposition \ref{prop:-Conditioned-on} (ii)). 
Since $\theta_{1}=\bm{x}$ coincide with $\theta_{2},\cdots,\theta_{5}$ to different extents, we also want the correlation reflect the degree of dependence. 
\end{example}

The \citet{chatterjee2021new}'s correlation coefficient $\xi_{n}(\bm{X},\bm{y})$
and the $\mathcal{T}_{0}$ are both asymmetric and measure both capture
non-linear dependencies between pairs of random variables, particularly
non-linear dependencies. The $\xi_{n}(\bm{X},\bm{y})$ rearranges
data pairs based on sorted values and computes rank-based statistics,
making it sensitive to changes in the data's distribution and structure.
On the other hand, $\mathcal{T}_{0}$ is a permutation-based measure
that evaluates the sum of contributions from all possible pairs of
data points, considering differences in values and their rankings.
This exhaustive approach in computing divergence is robust against outliers and provides a
detailed understanding of pairwise dependencies. However, it is computationally
intensive due to the reliance on permutations, especially for moderate to large datasets.

From Figure \ref{tab:-in-T0}, we can observe that classic correlations
like Pearson, Spearman, and Kendall cannot detect the functional dependence
between $\bm{x}$ and the other $\theta_{i}$'s
regardless of the signal-to-noise ratio, which is proportional to $1/\sigma^{2}$.
However, the \citet{chatterjee2021new} correlation and $\mathcal{T}_{0}$
are capable of capturing this dependence when the signal-to-noise
ratio is high and the sample size is sufficiently large ($N=50$). 
In addition, we may also observe that compared to Chatterjee
correlation, $\mathcal{T}_{0}$ will not falsely detect functional
dependence when the signal-to-noise ratio is low, even with only $N=50$. Chatterjee
 correlation seems to stumble when the sample size is limited. $\theta_{1}$ and the rest $\theta_2,\cdots,\theta_5$ have different degree of overlapping, which is reflected by the magnitude of $\mathcal{T}_0$ (when noise variance is small), but not by the other correlations.  %

\begin{figure}[h!]
\centering
\begin{tabular}{ccccc}
\toprule 
$\theta_{1}(x)$  & $\theta_{2}(x)$  & $\theta_{3}(x)$  & $\theta_{4}(x)$  & $\theta_{5}(x)$\tabularnewline
\midrule 
$x$  & $\begin{cases}
+x & x>0\\
x & x\leq0
\end{cases}$  & $-\theta_{1}(x)$  & $\begin{cases}
x+1 & x\in[-1.0,-0.5)\\
-x & x\in[-0.5,0.0)\\
x & x\in[0.0,0.5)\\
-x+1 & x\in[0.5,1.0]
\end{cases}$  & $-\theta_{4}(x)$\tabularnewline
\bottomrule
\end{tabular}
\includegraphics[width=0.95\textwidth]{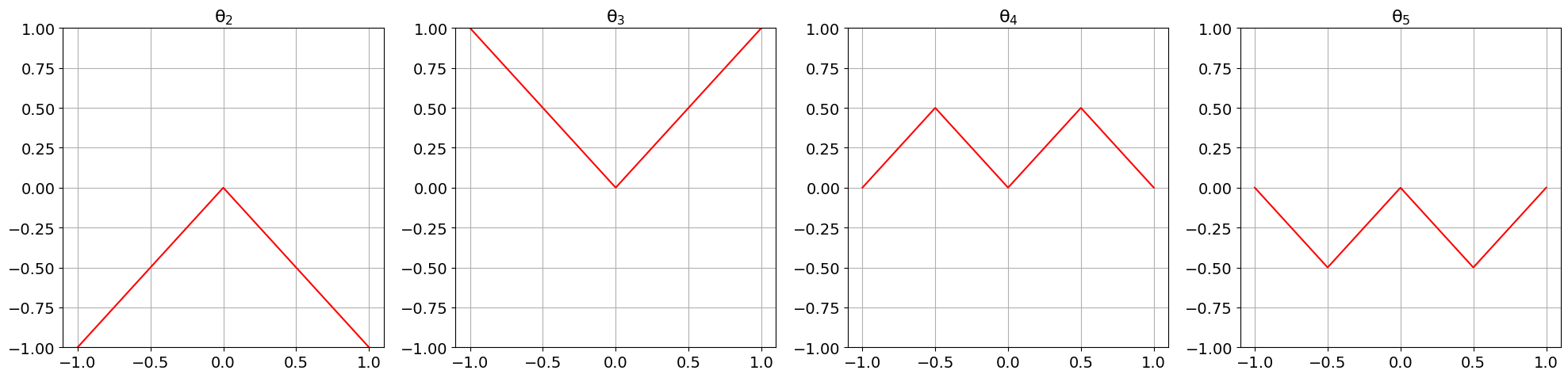}
\vspace{3em}
\begin{tabular}{c}
\toprule 
Sample size N=50\tabularnewline
\midrule 
\includegraphics[width=0.99\textwidth]{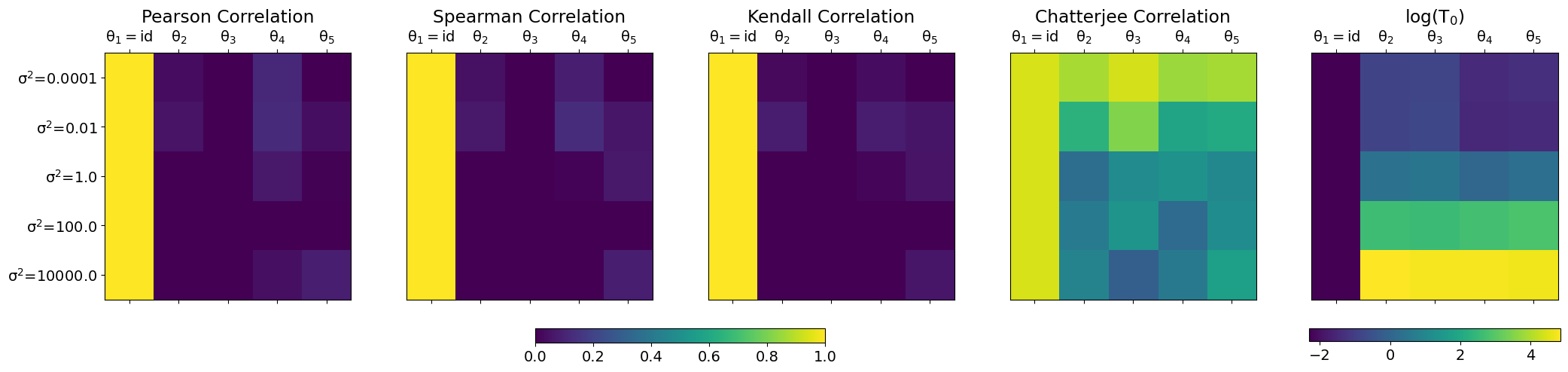}\tabularnewline
\midrule 
Sample size N=500\tabularnewline
\midrule 
\includegraphics[width=0.99\textwidth]{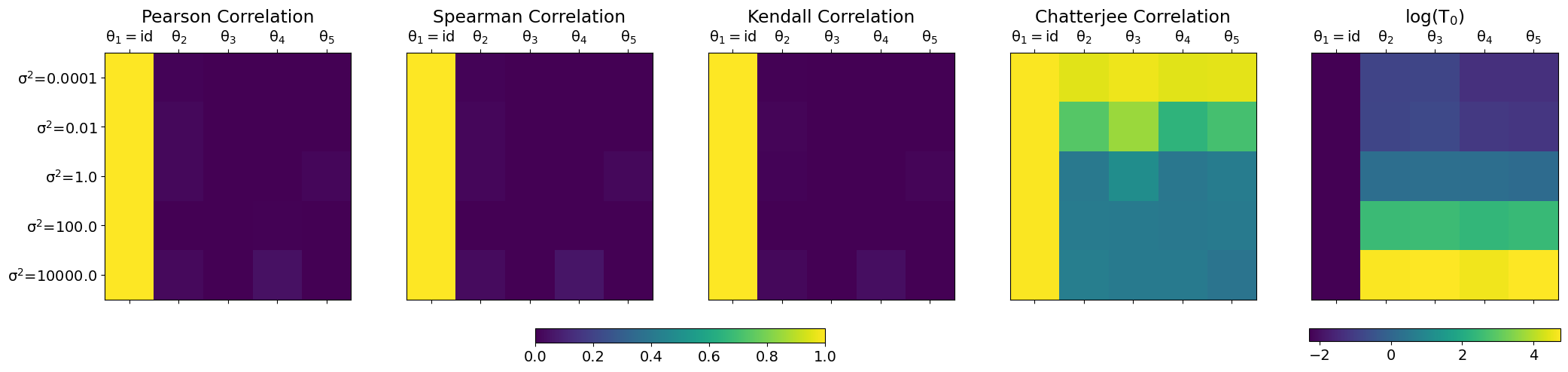}\tabularnewline
\bottomrule
\end{tabular}
\caption{\label{tab:-in-T0}Correlation between $\bm{x}$ and $\bm{y} = \theta_i(\bm{x})$ for $i = 1, \ldots, 5$. The expression and figure for each $\theta_i$ are reported in the top two rows  in the table. Left to Right (in the 3rd and 4th rows): Chatterjee correlation \citep{chatterjee2021new},
absolute Pearson correlation, absolute Spearman correlation and absolute
Kendall correlation, $\log(\mathcal{T}_{0})$. The $\mathcal{T}_{0}$
is shown on a log-scale for better comparison. %
We generate an equally spaced $\bm{x}$ on
$[-1,1]$ with sample size $N=50$ (3rd row) and $N=500$ (4th row). Gaussian noises with variance $\sigma^2$ are added to $\theta_i(\bm{x})$. }
\end{figure}

This example elucidates the behavior of various selection criteria
when applied to four distinct features generated from the same input.
In contrast to Pearson, Spearman and Kendall correlation, which measure
linear and ordinal association respectively, the $\mathcal{T}_{0}$
statistic demonstrates an effective approach. It does not erroneously
filter out the correct function when compared to the correlations
between the true function and the different features $\theta_{i}\bm{x}$.
As shown in the experimental results in Figure \ref{tab:-in-T0},
only $\mathcal{T}_{0}$ can detect the functional dependence and being
sensitive to signal-to-noise ratio; and this makes $\mathcal{T}_{0}$
a suitable correlation of detecting even different sampling plans.
This exemplifies $\mathcal{T}_{0}$'s utility in feature selection,
where the goal is to maintain the true influential features. %

\begin{table}[h!]
\centering

\begin{tabular}{ccccc}
\toprule 
 & $\theta_{1}(x)$  & $\theta_{2}(x)$  & $\theta_{3}(x)$  & $\theta_{4}(x)$\tabularnewline
\midrule
\midrule 
 & $x$  & $2\sin(x)$  & $2\sin(7x)$  & $\sin(x)$ \tabularnewline
\midrule 
BART Global Max  & 18.0  & 10.0  & 10.0  & \textbf{0.0 }\tabularnewline
\midrule 
BART Local  & 29.0  & 29.0  & 25.0  & \textbf{0.0 }\tabularnewline
\midrule 
Pearson  & 0.0  & 100.0  & 0.0  & \textbf{0.0 }\tabularnewline
\midrule 
Kendall  & 100.0  & 0.0  & 0.0  & \textbf{0.0 }\tabularnewline
\midrule 
$\mathcal{T}_{0}$  & 100.0  & 100.0  & 100.0  & \textbf{0.0 }\tabularnewline
\midrule
\midrule 
 & $x$  & $\sin(4x+0.2)$  & $\sin(4x+0.1)$  & $\sin(4x)$ \tabularnewline
\midrule 
BART Global Max  & 0.0  & 0.0  & 5.0  & \textbf{64.0 } \tabularnewline
\midrule 
BART Local  & 0.0  & 0.0  & 9.0  & \textbf{83.0 } \tabularnewline
\midrule 
Pearson  & 0.0  & 0.0  & 0.0  & \textbf{100.0 } \tabularnewline
\midrule 
Kendall  & 0.0  & 0.0  & 0.0  & \textbf{100.0 } \tabularnewline
\midrule 
$\mathcal{T}_{0}$  & 0.0  & 0.0  & 0.0  & \textbf{100.0 } \tabularnewline
\midrule
\midrule 
 & $x$  & $\cos(x)$  & $\sin(2x)$  & $\sin(x)$ \tabularnewline
\midrule 
BART Global Max  & 4.0  & 5.0  & 6.0  & \textbf{5.0 } \tabularnewline
\midrule 
BART Local  & 7.0  & 11.0  & 13.0  & \textbf{13.0 } \tabularnewline
\midrule 
Pearson  & 0.0  & 0.0  & 0.0  & \textbf{100.0} \tabularnewline
\midrule 
Kendall  & 100.0  & 0.0  & 0.0  & \textbf{0.0 } \tabularnewline
\midrule 
$\mathcal{T}_{0}$  & 100.0  & 0.0  & 100.0  & \textbf{100.0 } \tabularnewline
\midrule
\midrule 
 & $x$  & $\sin(4x)$  & $\sin(6x)$  & $\sin(5x)$ \tabularnewline
\midrule 
BART Global Max  & 0.0  & 16.0  & 0.0  & \textbf{61.0 } \tabularnewline
\midrule 
BART Local  & 1.0  & 29.0  & 0.0  & \textbf{68.0 } \tabularnewline
\midrule 
Pearson  & 0.0  & 0.0  & 0.0  & \textbf{100.0 } \tabularnewline
\midrule 
Kendall  & 0.0  & 0.0  & 0.0  & \textbf{100.0 } \tabularnewline
\midrule 
$\mathcal{T}_{0}$  & 0.0  & 0.0  & 0.0  & \textbf{100.0 } \tabularnewline
\bottomrule
\end{tabular}

\caption{\label{tab:-in-18 revisit} The inclusion percentage, as an empirical approximation
to the inclusion probability, by methods BART ($\mathtt{bartMachine}$
R package ($m=50$)), $\mathcal{T}_{0}$, Pearson's correlation  and Kendall's  tau
between $x,\theta x$ are provided for comparison. The true signal
$\theta_{4}$ is highlighted in bold, and all experiments are done
with $\bm{x}\sim N(0,1)$ with sample size 500. }
\end{table}

\section{\label{sec:Experiments}Experiments}

In this section, we conduct simulations to assess the performance
of the proposed $\mT_{0}$ divergence for selecting variables and
symbolic expressions, and lend support to the theoretical results in preceding
sections. We expect that the concordant divergence statistics, which is motivated
as derived from the discussion that ``tree-based splits are attempting
to match the rankings of $\bm{z}$ and $y$'', will behave similarly
to the tree-based methods, on these symbolic regression tasks.

\subsection{\label{3-var-signal}AUC for feature selection }

\begin{figure}[t]
\centering

\begin{tabular}{ccc}
Noise variance 0  & Noise variance 0.01  & Noise variance 0.1\tabularnewline
\includegraphics[width=0.3\textwidth]{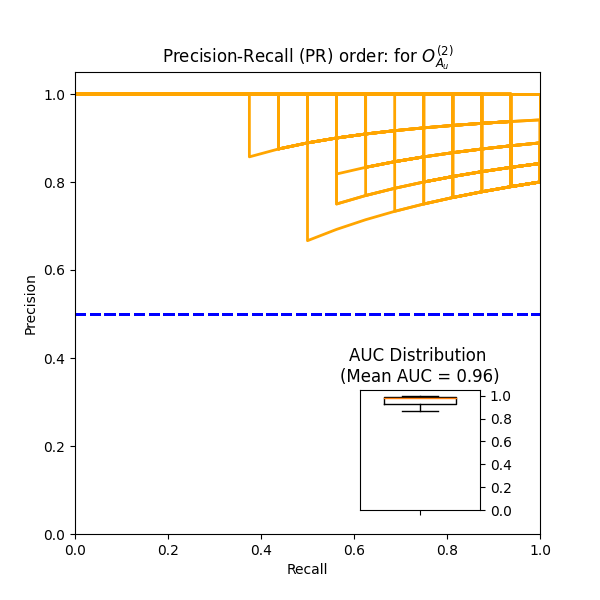}  & \includegraphics[width=0.3\textwidth]{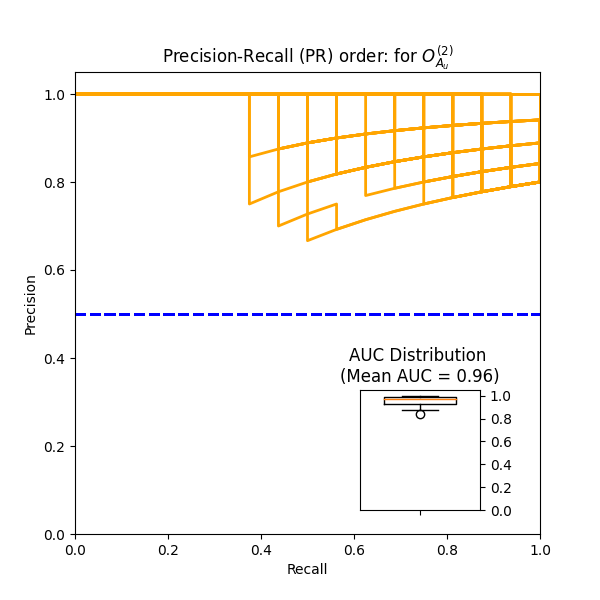}  & \includegraphics[width=0.3\textwidth]{figs/noise_var_0\lyxdot 01_PR_AUC_BU_raw}\tabularnewline
\includegraphics[width=0.3\textwidth]{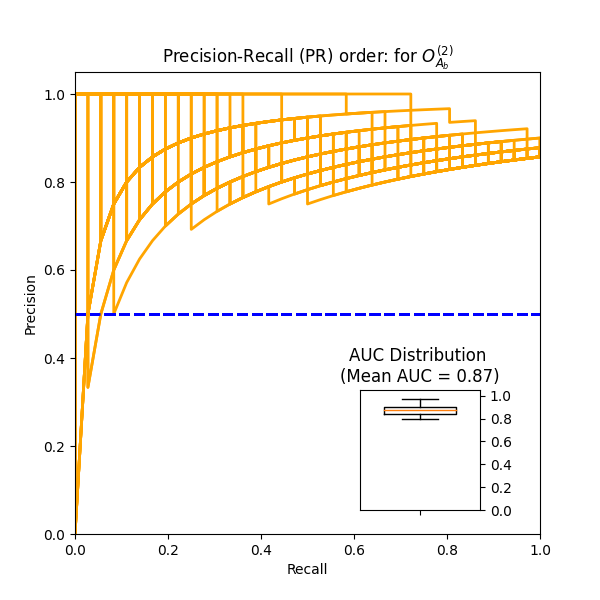}  & \includegraphics[width=0.3\textwidth]{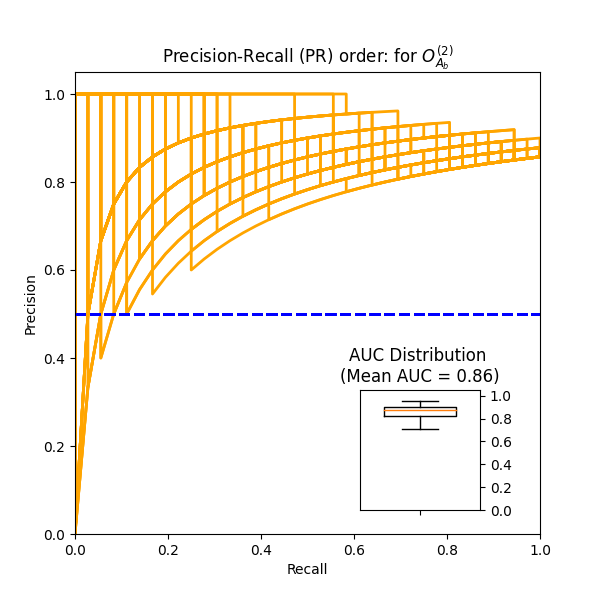}  & \includegraphics[width=0.3\textwidth]{figs/noise_var_0\lyxdot 01_PR_AUC_UB_raw}\tabularnewline
\end{tabular}

\caption{\label{fig:PR_AUC} We illustrate the PR curves from 50 repeats ($n=100$)
of a 2-layer symbolic regression with $\mathcal{O}_{u}=\{id,x^{3}\}$
and $\mathcal{O}_{b}=\{+,\times\}$. The true signal is \eqref{eq:3_var_true_signal}
with no noise. The first row corresponds to the architecture of $\mathcal{O}_{A_{u}}^{(2)}$
and the second row corresponds to the architecture of $\mathcal{O}_{A_{b}}^{(2)}$.
We provide the boxplot to show the AUC values amongst 50 repeats.}
\end{figure}

Continuing the discussion in Example \ref{exa:(Symbolic-feature-mappings)} We consider a 3-dimensional input variables $(x_{1},x_{2},x_{3})\in[0,1]^{3}$
and the following model 
\begin{equation}
y=2x_{1}^{3}+5x_{3}+10+\epsilon,\label{eq:3_var_true_signal}
\end{equation}
where $\epsilon\sim N(0,\sigma^{2}).$ With sample size $n$, we generate
an $n\times3$ matrix $\bm{X}$ for input variables using uniform
random variables, and the corresponding $\bm{y}$ using \eqref{eq:3_var_true_signal}.
\global\long\def\mO{\mathcal{O}}%
We consider two architectures for generating transformations: $\mathcal{O}_{A_{u}}^{(2)}=\mO_{u}\circ\mO_{b}$
and $\mathcal{O}_{A_{b}}^{(2)}=\mO_{b}\circ\mO_{u}$, where $\mathcal{O}_{u}=\{id,x^{3}\}$
and $\mathcal{O}_{b}=\{+,\times\}$. The design matrix for each architecture
has the following dimensionality: 
\begin{enumerate}
\item For $\mathcal{O}_{A_{u}}^{(2)}$, after the first layer of binary
operations, we have $2(C_{3}^{2}+C_{3}^{1})=12$ different features
and a $n\times12$ matrix. Then, we take this $n\times12$ matrix
as the input of the next layer of unary operations and produce  $C_{2}^{1}\times12=24$
different features and a $n\times24$ matrix. 
\item For $\mathcal{O}_{A_{b}}^{(2)}$, similarly to the calculation above,
the first layer of unary operations gives $C_{2}^{1}\times3=6$ features,
and the second layer of binary operations give $2(C_{6}^{2}+C_{6}^{1})=42$
features. 
\end{enumerate}
The goal of symbolic regression is to select features from the $n\times24$
matrix (if $\mathcal{O}_{A_{u}}^{(2)}$) or $n\times42$ matrix (if
$\mathcal{O}_{A_{b}}^{(2)}$) given data.

\begin{table}
\centering %
\begin{tabular}{|>{\raggedright}m{2cm}>{\raggedright}m{2cm}|>{\raggedright}m{2cm}>{\raggedright}m{2cm}>{\raggedright}m{2cm}>{\raggedright}m{2cm}|}
\hline 
\textbf{For $\mathcal{O}_{A_{u}}^{(2)}$}  &  & \textbf{For $\mathcal{O}_{A_{b}}^{(2)}$}  &  &  & \tabularnewline
\hline 
$x_{1}+x_{1}$  & $x_{1}\times x_{3}$  & $x_{1}+x_{1}$  & $x_{1}+x_{3}$  & $x_{1}^{3}+x_{3}$  & $x_{3}+x_{3}^{3}$ \tabularnewline
$(x_{1}+x_{1})^{3}$  & $(x_{1}\times x_{3})^{3}$  & $x_{1}\times x_{1}$  & $x_{1}\times x_{3}$  & $x_{1}^{3}\times x_{3}$  & $x_{3}\times x_{3}^{3}$ \tabularnewline
$x_{1}\times x_{1}$  & $x_{3}+x_{3}$  & $x_{1}+x_{1}^{3}$  & $x_{1}+x_{3}^{3}$  & $x_{1}^{3}+x_{3}^{3}$  & $x_{3}^{3}+x_{3}^{3}$ \tabularnewline
$(x_{1}\times x_{1})^{3}$  & $(x_{3}+x_{3})^{3}$  & $x_{1}\times x_{1}^{3}$  & $x_{1}\times x_{3}^{3}$  & $x_{1}^{3}\times x_{3}^{3}$  & $x_{3}^{3}\times x_{3}^{3}$ \tabularnewline
$x_{1}+x_{3}$  & $x_{3}\times x_{3}$  & $x_{1}^{3}+x_{1}^{3}$  & $x_{1}^{3}+x_{1}^{3}$  & $x_{3}+x_{3}$  & \tabularnewline
$(x_{1}+x_{3})^{3}$  & $(x_{3}\times x_{3})^{3}$  & $x_{1}^{3}\times x_{1}^{3}$  & $x_{1}^{3}\times x_{1}^{3}$  & $x_{3}\times x_{3}$  & \tabularnewline
\hline 
\end{tabular}\caption{\label{tab:Correct-features-for}Correct features for the problem
\eqref{eq:3_var_true_signal} as they \emph{only} contains $x_{1}$,
$x_{3}$, or their transforms.}
\end{table}

The true signal in \eqref{eq:3_var_true_signal} uses only $x_{1}$
and $x_{3}$. For evaluation, we consider a feature to be ``correct''
as long as it \emph{only} contains $x_{1}$, $x_{3}$, or their transforms
(as listed out in Table \ref{tab:Correct-features-for}). To obtain a 
useful ROC curve, we first create an array called ground\_truth of
size $N_{total}$, initialized with zeros. We assign $N_{true}(=1)$
true feature to the ground truth array by setting its corresponding
element to 1. Next, we create an array called predicted\_scores of
size $N_{total}$, containing random scores for each feature. We then
assign the highest $N_{selected}$ predicted scores from our BART
selection procedure to the selected features by sorting the scores
and assigning the top $N_{selected}$ values to the selected feature
indices. We use $N_{selected}=3$ in this experiment to indicate that
there may be $x_{1},x_{3}$ and the constant intercept (which is included
by default) in \eqref{3-var-signal} will be correctly selected.

Finally, we plot the performance curve\footnote{We calculate the false positive rate (FPR) and true positive rate
(TPR) at various thresholds using the \textbf{roc\_curve} function
from \textbf{sklearn.metrics}, which takes the ground truth labels
and predicted scores as input. The AUC value is computed using the
\textbf{auc} function, which calculates the area under the ROC curve
using the trapezoidal rule.} along with the reference diagonal line representing the performance
of a random classifier. According to the criterion where we consider
a feature to be ``correct'' as long as it \emph{only} contains $x_{1}$,
$x_{3}$, we can examine each of the (24 or 42) features and label
them as 1 if ``correct''; as 0 if not. With this manually examined
ground truth label, we also compare the $N_{selected}=3$ labels to
this ground truth, we can compute the precision-recall curve and its
AUC. The AUC value is shown in the legend, providing a measure of
the performance of our feature selection method. The higher the AUC,
the better our method is at identifying the true feature among the
selected features. In symbolic regression, we focus on keeping the
correct signals involving active variables, metrics like AUC for Precision-Recall
curve is more appropriate for evaluating the performance of each method
than the usual TDR/FDR AUC, as we provided in Figure \ref{fig:PR_AUC}.
A high PR AUC indicates that the model achieves both high recall and
high precision, maintaining a good balance, especially when a positive
class is of great interest or when negative examples outnumber positive
ones.

From Figure \ref{fig:PR_AUC}, we can observe that as the noise variance
increases, the AUC decreases. It is also of interest to observe that
in the $\mathcal{O}_{A_{u}}^{(2)}$ setting, the AUC is higher than
that of the architecture of $\mathcal{O}_{A_{b}}^{(2)}$. This lends
support to the architecture design in \citet{ye2021operator} that
the binary operator should be introduced as the first alternating
layer.

\subsection{Comparison against other methods}

Furthermore, we use the same experiment to compare the performance
of different model-based feature selection methods, and our $\mathcal{T}_{0}$
using the signal \eqref{eq:3_var_true_signal}. We select the features
using $\bm{y}$ with different additive noise variances and the
corresponding $\bm{X}$ from $\mathcal{O}_{A_{u}}^{(2)}$ (24 features)
or $\mathcal{O}_{A_{b}}^{(2)}$ (42 features) architectures. For comparison,
we include LASSO ($\mathtt{glmnet}==4.1-8$, \citet{hastie2009elements}
with lambda chosen by default cross-validation (lambda=-1)), SCAD
($\mathtt{ncvreg==3.14.1}$, \citet{fan2014nonparametric}) and step-wise
subset selection using linear models (LMSTEPWISE, $\mathtt{leaps::regsubsets==3.1}$)
as competitors of model-based feature selections methods. Our $\mathcal{T}_{0}$
in \eqref{eq:T0_statistics} inspired by ranking perspective from
BART is the only method that is not model-based. In what follows,
our concern is the correct feature selection instead of predictive
performance. Thus, we simply use the same dataset for selecting the
features.

Previously, we observed in Example \ref{exa:(Power-of-)} that $\mathcal{T}_{0}$
behaves differently than classical correlation coefficient. In this
set of experimental results in Figure \ref{fig:Average_Inclusion_Probability},
we display the \emph{average inclusion probability} (AIP) as an approximation to the frequency of features being selected, since $\mathcal{T}_{0}$
is not a formal feature selection method that can be evaluated by
AUC curve, yet we still want to see how well it performs when we select
the feature with smallest concordant divergence statistics. All those
features \emph{only} contains $x_{1}$, $x_{3}$ or their transforms
(of any kind) are considered correct if selected. Then we repeat the
experiment for 50 different random $\bm{X}$ and computed the frequency
that each of these correct features are selected. Then we sum up these
probabilities and divide by $N_{selected}=3$, as our AIP metric in
Figure \ref{fig:Average_Inclusion_Probability}. That means we ask
each method to pick $N_{selected}=3$ features among all possible
features for 50 times, and AIP represents the average probability
that these features are all correct. The higher AIP means more correct
features are chosen in this configuration of sample size, noise variance, 
and method.

It is not hard to see that both BART and $\mathcal{T}_{0}$ are performing
extremely well for low noise variances, followed by LASSO. LASSO and
SCAD are behaved surprisingly well in this 2-layer example perhaps
due to the relatively small number of features, in contrast to the
\citet{ye2021operator}'s setting where a much large number of features
need to be screened. However, the low AIP associated with linear model
stepwise selection (LMSTEPWISE) is clearly not suitable for this scenario
for $\mathcal{O}_{A_{u}}^{(2)}$ nor $\mathcal{O}_{A_{b}}^{(2)}$.
One step further, we point out that $\mathcal{T}_{0}$ is the
fastest method, even if it involves summation over permutations, followed
by LASSO. While BART has the benefit of providing uncertainty quantification
and higher selection power, it is among the slower methods due to
its MCMC sampling step.\textcolor{red}{{} }

\begin{figure}[t]
\centering

\includegraphics[width=0.95\textwidth]{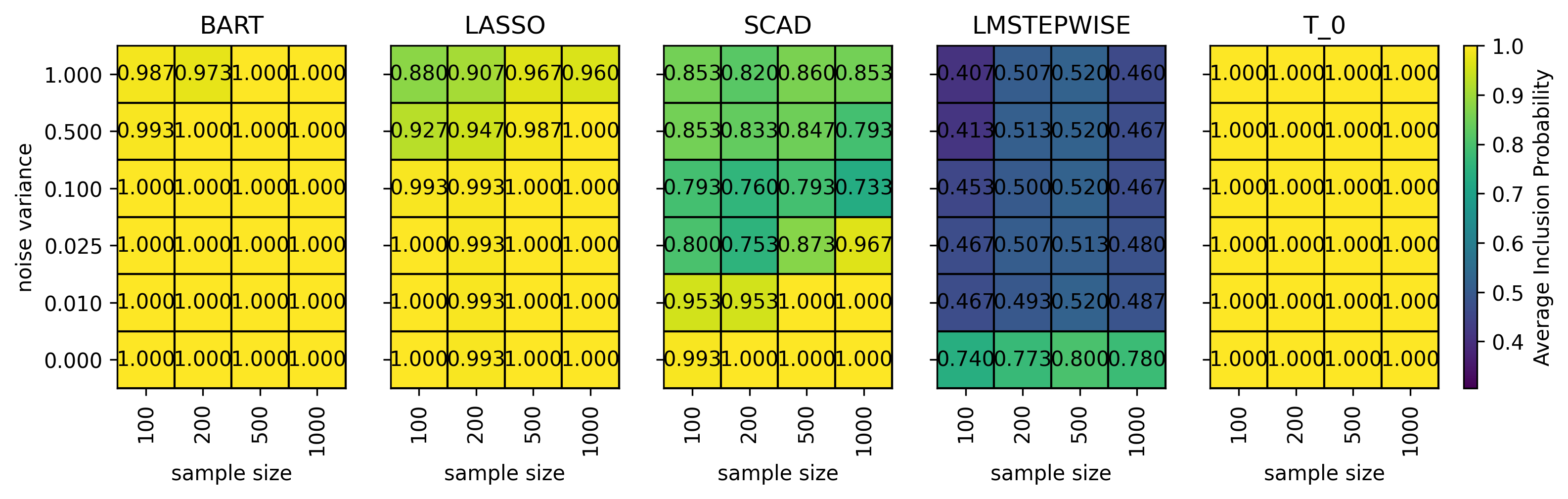}

\includegraphics[width=0.95\textwidth]{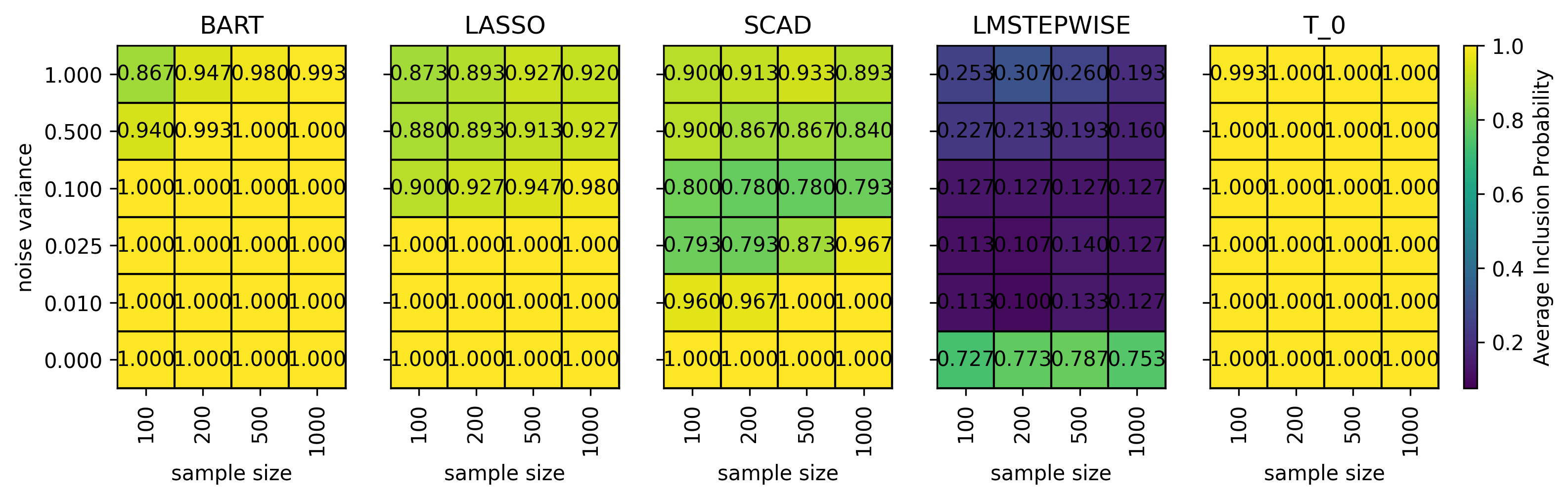}

\caption{\label{fig:Average_Inclusion_Probability} We illustrate the average
inclusion probabilities from 50 repeats of a 2-layer symbolic regression
with $\mathcal{O}_{u}=\{id,x^{3}\}$ and $\mathcal{O}_{b}=\{+,\times\}$.
The true signal is \eqref{eq:3_var_true_signal}. The first row corresponds
to the architecture of $\mathcal{O}_{A_{u}}^{(2)}$ and the second
row corresponds to the architecture of $\mathcal{O}_{A_{b}}^{(2)}$. }
\end{figure}

Concurrently, we also test these nonparametric methods on classic
ODE-Strogatz repository for symbolic regression dataset as an example
of ``ground-truth regression problems'' \citep{la2021contemporary}.
However, we did not intend to compete with the formal symbolic regression
methods but focus on the feature selection accuracy like above. In
this experiment, we use different orders of symbolic compositions
(e.g., ub, ubb) instead of alternating layers to ensure that the correct
composition of symbols can be obtained through the architecture. In
addition to different layers, we also use different sets of binary
and unitary operators to have better generality. The LMSTEPWISE cannot
work properly due to the >100 co-linear features in these examples.
Since all expressions contain $x_{1}$(x variate in raw data) and
$x_{2}$(y variate in raw data), we bring in a more stringent criteria:
all those features \emph{only} contains $x_{1}$, $x_{3}$ and their
correct transforms are considered correct.

To make the comparison fair, we enforce that the regression does not
attempt to estimate relevant coefficients for symbolic terms but look
at the selected features among all possible expressions. From Table
\ref{tab:Accuracy-rates-from}, we can observe that for simple ODEs
(vdp2), all four methods behave reasonably well. However, when
we study additive signals with different magnitudes (glider1, vdp1),
the LASSO and SCAD do not recognize the correct format. BART behaves
bad too, while $\mathcal{T}_{0}$ actually identify features that
coincide with the original ODE signal better. For complicated composition
(glider2), it seems that all methods except SCAD and $\mathcal{T}_{0}$
work pretty well, even if coefficient estimates are not allowed. 

\begin{table}[ht]
\centering

\begin{align*}
\text{Symbolic composition order}
\end{align*}

\begin{tabular}{lcccc}
\toprule 
dataset  & truth  & $\mathcal{O}_{u}$  & $\mathcal{O}_{b}$  & order\tabularnewline
\midrule
\midrule 
\textbf{glider1}  & $-0.05x_{1}^{2}-\sin(x_{2})$  & $\{\sin(x),x^{2}\}$  & $\{+,-\}$  & ub\tabularnewline
\midrule 
\textbf{glider2}  & $x_{1}-\cos(x_{2})/x_{1}$  & $\{\cos(x),\text{id}\}$  & $\{-,/\}$  & ubb\tabularnewline
\midrule 
\multirow{1}{*}{\textbf{vdp1}} & $-10/3x_{1}^{3}+10/3x_{1}+10x_{2}$  & \multirow{1}{*}{$\{x^{3}\}$} & \multirow{1}{*}{$\{+,-\}$} & \multirow{1}{*}{ubb}\tabularnewline
\midrule 
\textbf{vdp2}  & $-x_{1}/10$  & $/$  & $/$  & $/$\tabularnewline
\bottomrule
\end{tabular}

\begin{align*}
\text{Results by each method with noise variance } & 0.100
\end{align*}

\begin{tabular}{ccccc}
\toprule 
method  & \textbf{glider1}  & \textbf{glider2}  & \textbf{vdp1}  & \textbf{vdp2}\tabularnewline
\midrule
\midrule 
BART  & $x_{2}^{2}$  & $\cos(x_{2})/x_{2}-x_{1}-x_{2}$  & $x_{1}^{3}+x_{2}^{3}-x_{2}^{3}$  & $x_{2}$\tabularnewline
\midrule 
LASSO  & $\sin(x_{1})/\sin(x_{2})$  & $\cos(x_{2})/x_{2}-x_{1}-x_{2}$  & $x_{2}^{3}$  & $x_{2}$\tabularnewline
\midrule 
SCAD  & $x_{2}^{2}$  & $x_{1}$  & $x_{2}^{3}$  & $x_{2}$\tabularnewline
\midrule 
\multirow{2}{*}{$\mathcal{T}_{0}$ } & $\sin(x_{1})-\sin(x_{2})$  & $x_{1}-\cos(x_{1})/\cos(x_{2})-x_{1}$  & $x_{1}^{3}-x_{2}^{3}-x_{1}^{3}$  & $x_{2}$\tabularnewline
 & or $\sin(x_{1})-x_{1}$  &  & or $x_{2}$ &  \tabularnewline
\bottomrule
\end{tabular}

\caption{\label{tab:Accuracy-rates-from}The most frequently selected expressions
from datasets ($n=400$) in the ODE-Strogatz repository \protect\protect\protect\protect\protect\protect\protect\protect\protect\url{https://github.com/lacava/ode-strogatz},
as generated by using the first principles physical models. The LMSTEPWISE
(linear model with step-wise selection) runs into error due to the
high co-linearity in the input of these datasets.}
\end{table}

\section{\label{sec:Future-work}Discussion and Future work}

Tree-based methods are highly effective for a wide range of real-world tasks. The current understanding of this effectiveness often relies on asymptotic analysis or heuristics. While advancements in these two directions are both useful, they yield a substantial gap that calls for a formal investigation of tree-based methods that can generalize and closely link to their empirical success. In this paper, we develop a comprehensive ranking perspective for understanding tree-based
methods. We provide a series of finite-sample analyses concerning the interplay between splits and ranking, covering local splits, single trees, and tree ensembles. Asymptotics results are also established when we evaluate selected tree-based methods using their ranking performance. One particular application is symbolic feature selection in the presence of transformations of input variables, a crucial step in symbolic regression that the empirical success of tree-based methods has only been observed recently. Our ranking perspective leads to insights when comparing transformations and also provides new divergence statistics as a method to select symbolic features. 

The motivation for this work was to develop a better understanding of a broad class of tree-based methods through ranking. A future objective is to provide a foundation for more model structures to which tree-based methods can be applied, including classification, non-Gaussian error assumptions and non-standard inputs \citep{LLM2023,LHL2024}. The ranking perspective is presumably more robust to model misspecification, which might help explain the robustness of tree methods in real-world applications. Similarly, \citet{clemencon_ranking_2008} highlighted that ranking theory, when extended beyond two items, significantly depends on the designated loss, marking an interesting area for further research when the principal decision ratio $\tau$ is defined by other norms (e.g., $L^1$). Finally, our ranking perspective on tree methods can be expanded to provide uncertainty quantification for ranking.

\section*{Acknowledgment}

HL thanks for Mikael Vejdemo Johansson for kindly providing computational
resource in pilot experiments. HL was supported by the Director, Office
of Science, of the U.S. Department of Energy under Contract DE-AC02-05CH11231,
and DE-FOA-0002958. HL is also supported by NSF grant DMS 2412403. ML’s research was partially supported by NSF grant DMS/NIGMS-2153704.

\newpage
\bibliography{bsymBART}

\newpage{}

\appendix

\section*{Appendices}

\section{\label{sec:Proof-of-LemmaA}Proof of Lemma \ref{lem:LemmaA}}
\begin{proof}
Suppose that $P_{1}^{'}=\{y_{(1)}<y_{(2)}<\cdots<y_{(i+1)}\}$ and
$P_{2}^{'}=\{y_{(i)}<y_{(i+2)}<\cdots<y_{(n)}\}$ and the variance
of $P_{1}^{*}$ is strictly smaller than the variance of $P_{1}^{**}$,
then we can write explicitly that the group means for $P_{1}^{'}$
and $P_{2}^{'}$: $\mu'_{1}=\frac{1}{i}\cdot\left(\mu_{1}^{*}\cdot i-y_{(i)}+y_{(i+1)}\right)$
and $\mu'_{2}=\frac{1}{n-i}\cdot\left(\mu_{2}^{*}\cdot(n-i)-y_{(i+1)}+y_{(i)}\right)$,
where $\mu_{1}^{*},\mu_{2}^{*}$ are corresponding in-group means
of $P_{1}^{*},P_{2}^{*}$. Since we assume that the variance of $P_{1}^{*}$
is strictly smaller than the variance of $P_{1}^{**}$, our idea is
to prove that switching $y_{(i+1)}$ and $y_{(i)}$ will reduce $P_{1}^{'}$
to $P_{1}^{*}$ and $P_{2}^{'}$ to $P_{2}^{*}$ with strictly smaller
sum of group variances.

Now, we consider the differences $\mu_{1}^{*}-\mu'_{1}=\mu_{1}^{*}-\frac{1}{i}\cdot\left(\mu_{1}^{*}\cdot i-y_{(i)}+y_{(i+1)}\right)=-\frac{1}{i}\cdot\left(-y_{(i)}+y_{(i+1)}\right)<0$
and $\mu_{2}^{*}-\mu'_{2}=\mu_{2}^{*}-\frac{1}{i}\cdot\left(\mu_{2}^{*}\cdot i-y_{(i)}+y_{(i+1)}\right)=-\frac{1}{n-i}\cdot\left(-y_{(i+1)}+y_{(i)}\right)>0$.
\begin{align}
 & \sum_{y_{(j)}\in P'_{1}}(y_{(j)}-\mu_{1}^{'})^{2}+\sum_{y_{(j)}\in P'_{2}}(y_{(j)}-\mu_{2}^{'})^{2}\\
= & \sum_{j=1}^{i-1}(y_{(j)}-\mu_{1}^{'})^{2}+(y_{(i+1)}-\mu_{1}^{'})^{2}+(y_{(i)}-\mu_{2}^{'})^{2}+\sum_{j=i+2}^{n}(y_{(j)}-\mu_{\text{2}}^{'})^{2}\nonumber \\
= & \sum_{j=1}^{i-1}(y_{(j)}-\mu_{1}^{*}+\mu_{1}^{*}-\mu_{1}^{'})^{2}+(y_{(i+1)}-\mu_{2}^{*}+\mu_{2}^{*}-\mu_{1}^{'})^{2}+(y_{(i)}-\mu_{1}^{*}+\mu_{1}^{*}-\mu_{2}^{'})^{2}+\sum_{j=i+2}^{n}(y_{(j)}-\mu_{2}^{*}+\mu_{2}^{*}-\mu_{\text{2}}^{'})^{2}\nonumber \\
= & \left(\sum_{j=1}^{i-1}(y_{(j)}-\mu_{1}^{*})^{2}+\sum_{j=1}^{i-1}2(y_{(j)}-\mu_{1}^{*})(\mu_{1}^{*}-\mu_{1}^{'})+\sum_{j=1}^{i-1}(\mu_{1}^{*}-\mu_{1}^{'})^{2}\right)+(y_{(i+1)}-\mu_{2}^{*}+\mu_{2}^{*}-\mu_{1}^{'})^{2}\nonumber \\
 & +\left(\sum_{j=i+2}^{n}(y_{(j)}-\mu_{2}^{*})^{2}+\sum_{j=i+2}^{n}2(y_{(j)}-\mu_{2}^{*})(\mu_{2}^{*}-\mu_{\text{2}}^{'})+\sum_{j=i+2}^{n}(\mu_{2}^{*}-\mu_{\text{2}}^{'})^{2}\right)+(y_{(i)}-\mu_{1}^{*}+\mu_{1}^{*}-\mu_{2}^{'})^{2}\nonumber \\
= & \left({\color{blue}\sum_{j=1}^{i-1}(y_{(j)}-\mu_{1}^{*})^{2}}+2(\mu_{1}^{*}-\mu_{1}^{'})\sum_{j=1}^{i-1}(y_{(i)}-\mu_{1}^{*})+\sum_{j=1}^{i-1}(\mu_{1}^{*}-\mu_{1}^{'})^{2}\right)+{\color{red}(y_{(i+1)}-\mu_{2}^{*})^{2}}\nonumber \\
 & +2(y_{(i+1)}-\mu_{2}^{*})(\mu_{2}^{*}-\mu_{1}^{'})+(\mu_{2}^{*}-\mu_{1}^{'})^{2}\nonumber \\
 & +\left({\color{red}\sum_{j=i+2}^{n}(y_{(j)}-\mu_{2}^{*})^{2}}+2(\mu_{2}^{*}-\mu_{\text{2}}^{'})\sum_{j=1}^{i-1}(y_{(j)}-\mu_{2}^{*})+\sum_{j=i+2}^{n}(\mu_{2}^{*}-\mu_{\text{2}}^{'})^{2}\right)+{\color{blue}(y_{(i)}-\mu_{1}^{*})^{2}}\nonumber \\
 & +2(y_{(i)}-\mu_{1}^{*})(\mu_{1}^{*}-\mu_{2}^{'})+(\mu_{1}^{*}-\mu_{2}^{'})^{2}\\
= & {\color{blue}\sum_{j=1}^{i}(y_{(j)}-\mu_{1}^{*})^{2}}+2(\mu_{1}^{*}-\mu_{1}^{'})\sum_{j=1}^{i-1}(y_{(i)}-\mu_{1}^{*})+\left[(\mu_{2}^{*}-\mu_{1}^{'})^{2}+\sum_{j=1}^{i-1}(\mu_{1}^{*}-\mu_{1}^{'})^{2}\right]\nonumber \\
 & +{\color{red}\sum_{j=i+1}^{n}(y_{(j)}-\mu_{2}^{*})^{2}}+2(\mu_{2}^{*}-\mu_{\text{2}}^{'})\sum_{j=1}^{i-1}(y_{(j)}-\mu_{2}^{*})+\left[(\mu_{1}^{*}-\mu_{2}^{'})^{2}+\sum_{j=i+2}^{n}(\mu_{2}^{*}-\mu_{\text{2}}^{'})^{2}\right]\nonumber \\
\end{align}
\begin{align}
= & {\color{blue}\sum_{j=1}^{i}(y_{(j)}-\mu_{1}^{*})^{2}}-\frac{2}{i}\left(-y_{(i)}+y_{(i+1)}\right)^{2}+\left[(\mu_{2}^{*}-\mu_{1}^{'})^{2}+\sum_{j=1}^{i-1}(\mu_{1}^{*}-\mu_{1}^{'})^{2}\right]\nonumber \\
 & +{\color{red}\sum_{j=i+1}^{n}(y_{(j)}-\mu_{2}^{*})^{2}}-\frac{2}{n-i}\left(-y_{(i+1)}+y_{(i)}\right)^{2}+\left[(\mu_{1}^{*}-\mu_{2}^{'})^{2}+\sum_{j=i+2}^{n}(\mu_{2}^{*}-\mu_{\text{2}}^{'})^{2}\right]\label{eq:ref0}
\end{align}
We use red and blue colored fonts to show how we group the terms in
formula, and note that the red and blue parts are essentially the
variances of $P_{1}^{*}$ and $P_{2}^{*}$, namely $\sum_{y_{(j)}\in P_{1}^{*}}(y_{(j)}-\mu_{1}^{*})^{2}+\sum_{y_{(j)}\in P_{2}^{*}}(y_{(j)}-\mu_{2}^{*})^{2}$,
and we show below that the rest part is greater than zero. From the
assumption (for the last inequality) that $n>4,\min(n-i,i)>2$, we
have 
\begin{align}
2(\mu_{1}^{*}-\mu_{1}^{'})\sum_{j=1}^{i-1}(y_{(i)}-\mu_{1}^{*}) & =2\left(-\frac{1}{i}\cdot\left(-y_{(i)}+y_{(i+1)}\right)\right)\left(-y_{(i)}+y_{(i+1)}\right)\nonumber \\
 & =-\frac{2}{i}\left(-y_{(i)}+y_{(i+1)}\right)^{2}\geq-\left(-y_{(i+1)}+y_{(i)}\right)^{2}\label{eq:ref1}\\
2(\mu_{2}^{*}-\mu_{\text{2}}^{'})\sum_{j=1}^{i-1}(y_{(j)}-\mu_{2}^{*}) & =2\left(-\frac{1}{n-i}\cdot\left(-y_{(i+1)}+y_{(i)}\right)\right)\left(-y_{(i+1)}+y_{(i)}\right)\nonumber \\
 & =-\frac{2}{n-i}\left(-y_{(i+1)}+y_{(i)}\right)^{2}\geq-\left(-y_{(i+1)}+y_{(i)}\right)^{2}\label{eq:ref2}
\end{align}
Now we want to compare $-\frac{2}{i}\left(-y_{(i)}+y_{(i+1)}\right)^{2}$
and $(\mu_{2}^{*}-\mu_{1}^{'})^{2}$. But from the sorted assumption
and \eqref{eq:ref1}, $\mu_{1}^{'}\leq y_{(i)}<y_{(i+1)}\leq\mu_{2}^{*}$,
\begin{align}
-\frac{2}{i}\left(-y_{(i)}+y_{(i+1)}\right)^{2}+(\mu_{2}^{*}-\mu_{1}^{'})^{2} & \geq-\left(-y_{(i)}+y_{(i+1)}\right)^{2}+(\mu_{2}^{*}-\mu_{1}^{'})^{2} & \geq0\label{eq:ref3}
\end{align}
Similarly, we can compare $-\frac{2}{n-i}\left(-y_{(i+1)}+y_{(i)}\right)^{2}$
and $(\mu_{1}^{*}-\mu_{2}^{'})^{2}$ where we use \eqref{eq:ref2}
and $\mu_{1}^{*}\leq y_{(i)}<y_{(i+1)}\leq\mu_{2}^{'}$: 
\begin{align}
-\frac{2}{n-i}\left(-y_{(i+1)}+y_{(i)}\right)^{2}+(\mu_{1}^{*}-\mu_{2}^{'})^{2} & \geq-\left(-y_{(i)}+y_{(i+1)}\right)^{2}+(\mu_{2}^{*}-\mu_{1}^{'})^{2} & \geq0\label{eq:ref4}
\end{align}
Using both \eqref{eq:ref3} and \eqref{eq:ref4} in \eqref{eq:ref0},
we have proven that 
\begin{align*}
\sum_{y_{(j)}\in P_{1}^{'}}(y_{(j)}-\mu_{1}^{'})^{2}+\sum_{y_{(j)}\in P_{2}^{'}}(y_{(j)}-\mu_{2}^{'})^{2} & \geq\sum_{y_{(j)}\in P_{1}^{*}}(y_{(j)}-\mu_{1}^{*})^{2}+\sum_{y_{(j)}\in P_{2}^{*}}(y_{(j)}-\mu_{2}^{*})^{2}.
\end{align*}
This means that switching $y_{(i)}$ and $y_{(i+1)}$ indeed reduces
the total in-group variances. For more general situations, given two
partitions $P_{1},P_{2}$ of fixed sizes, and assume $\mu_{1}<\mu_{2}$.
We can first sort responses and find any pair of responses $(y_{\alpha},y_{\beta})$
such that $y_{\alpha}\in P_{1}$, $y_{\beta}\in P_{2}$ and $y_{\alpha}>y_{\beta}$.
We put $y_{\alpha}$ into $P_{2}$ and $y_{\beta}$ into $P_{1}$
and repeat the argument above to show that the in-group variances
for both partition group decreases.

Similarly, assuming that the variance of $P_{1}^{*}$ is strictly
larger than the variance of $P_{1}^{**}$, we can prove that another
global minimum of the loss function is given by assuming partitions
of $P_{1}^{**}$ and $P_{2}^{**}$. The key observation is that, the
loss can be considered as a function of two sets $P_{1}^{'}$,$P_{2}^{'}$
and there are two local minima attained by $P_{1}^{*},P_{2}^{*}$
or $P_{1}^{**},P_{2}^{**}$. The above arguments only prove that $P_{1}^{*},P_{2}^{*}$
and $P_{1}^{**},P_{2}^{**}$ both attain local minima, and they are
the only possible local minima. 
\end{proof}

\section{\label{sec:Proof-of-Proposition_93}Proof of Proposition \ref{prop:For-any-split-risk-decreases}}
\begin{proof}
Without loss of generality, we assume that the LHS takes the ordered
form $\sum_{i=1}^{n_{\text{left}}}(y_{(i)}-\mu_{L}^{C,k})^{2}+\sum_{i=n_{\text{left}}+1}^{n}(y_{(i)}-\mu_{R}^{C,k})^{2}$
where $n_{\text{left}}$ is the number of observations in the left
node. 
\begin{align*}
\sum_{i=1}^{n}(y_{(i)}-\mu^{\sharp})^{2} & =\sum_{i=1}^{n_{\text{left}}}(y_{(i)}-\mu^{\sharp})^{2}+\sum_{i=n_{\text{left}}+1}^{n}(y_{(i)}-\mu^{\sharp})^{2}\\
 & =\sum_{i=1}^{n_{\text{left}}}(y_{(i)}-\mu_{L}^{C,k}+\mu_{L}^{C,k}-\mu^{\sharp})^{2}+\sum_{i=n_{\text{left}}+1}^{n}(y_{(i)}-\mu_{R}^{C,k}+\mu_{R}^{C,k}-\mu^{\sharp})^{2}\\
 & =\sum_{i=1}^{n_{\text{left}}}(y_{(i)}-\mu_{L}^{C,k})^{2}+\sum_{i=n_{\text{left}}+1}^{n}(y_{(i)}-\mu_{R}^{C,k})^{2}+\sum_{i=1}^{n_{\text{left}}}(\mu_{L}^{C,k}-\mu^{\sharp})^{2}+\sum_{i=n_{\text{left}}+1}^{n}(\mu_{R}^{C,k}-\mu^{\sharp})^{2}\\
 & +2\underset{=0}{\underbrace{\sum_{i=1}^{n_{\text{left}}}(y_{(i)}-\mu_{L}^{C,k})(\mu_{L}^{C,k}-\mu^{\sharp})}}+2\underset{=0}{\underbrace{\sum_{i=n_{\text{left}}+1}^{n}(y_{(i)}-\mu_{R}^{C,k})(\mu_{R}^{C,k}-\mu^{\sharp})}}\\
 & =\sum_{i=1}^{n_{\text{left}}}(y_{(i)}-\mu_{L}^{C,k})^{2}+\sum_{i=n_{\text{left}}+1}^{n}(y_{(i)}-\mu_{R}^{C,k})^{2}+\sum_{i=1}^{n_{\text{left}}}(\mu_{L}^{C,k}-\mu^{\sharp})^{2}+\sum_{i=n_{\text{left}}+1}^{n}(\mu_{R}^{C,k}-\mu^{\sharp})^{2}
\end{align*}
We attained the desired inequality by dropping the third and fourth
summation in the last equality. 
\end{proof}

\section{\label{sec:Proof-prop13}Proof of Proposition \ref{prop:piecewise-monotonic-var-1}}
\begin{proof}
As the statement of the proposition, we can consider three cases as
follows, with an illustrative reference to Figure \ref{fig:Refined-monotonic-intervals}.
The corresponding three cases for (1) both of them have 0 pre-image
(2) both of them have 1 pre-image (3) one of them have 0 pre-image
and the other has 1 pre-image, are detailed as follows: 
\begin{enumerate}
\item $\theta_{1}$ has 0 pre-image of $C_{1}$ on $I$; $\theta_{2}$ has
0 pre-image of $C_{2}$ on $I$. Then, on the refined interval $I\in\mathcal{I}_{1\cap2}$,
either $\theta_{1}(u)\leq C_{1}$ or $\theta_{1}(u)>C_{1}$, for $\forall u\in I$.
Note that pre-image is well-defined when the $\theta_{1}$ (and $\theta_{2}$)
is restricted on a refined interval $I\in\mathcal{I}_{1\cap2}$. This
indicates that over $I$ the splitting value corresponding to $\theta_{1}^{-1}(C_{1})$
will not separate any $\bm{x}_{k}\in I$. Similarly, the splitting
value for $\bm{x}$ corresponding to $\theta_{2}^{-1}(C_{2})$ will
not separate any univariate inputs $x_{k}\in I$. The corresponding
terms in the principal decision ratios \ref{eq:MH_ratio_serial} becomes:
{\footnotesize
\begin{align}
\tau\mid_{I} & =\frac{\exp\left(-\sum_{i=1}^{n}(y_{i}-\mu_{L}^{1})^{2}\bm{1}(\bm{z}_{i,k_{1}}\leq C_{1})\bm{1}(\bm{x}_{i,k_{1}}\in I)-\sum_{i=1}^{n}(y_{i}-\mu_{R}^{1})^{2}\bm{1}(\bm{z}_{i,k_{1}}>C_{1})\bm{1}(\bm{x}_{i,k_{1}}\in I)\right)}{\exp\left(-\sum_{i=1}^{n}(y_{i}-\mu_{L}^{2})^{2}\bm{1}(\bm{z}_{i,k_{2}}\leq C_{2})\bm{1}(\bm{x}_{i,k_{2}}\in I)-\sum_{i=1}^{n}(y_{i}-\mu_{R}^{2})^{2}\bm{1}(\bm{z}_{i,k_{2}}>C_{2})\bm{1}(\bm{x}_{i,k_{2}}\in I)\right)}\label{eq:MH_ratio_serial-1}\\
 & =\frac{\exp\left(-\sum_{i=1}^{n_{1}}(y_{(i)}-\mu_{L}^{1})^{2}\bm{1}(\bm{z}_{(i),k_{1}}\leq C_{1})\bm{1}(\bm{x}_{(i),k_{1}}\in I)-\sum_{i=n_{1}+1}^{n}(y_{(i)}-\mu_{R}^{1})^{2}\bm{1}(\bm{z}_{(i),k_{1}}>C_{1})\bm{1}(\bm{x}_{(i),k_{1}}\in I)\right)}{\exp\left(-\sum_{i=1}^{n_{2}}(y_{(i)}-\mu_{L}^{2})^{2}\bm{1}(\bm{z}_{(i),k_{2}}\leq C_{2})\bm{1}(\bm{x}_{(i),k_{2}}\in I)-\sum_{i=n_{2}+1}^{n}(y_{(i)}-\mu_{R}^{2})^{2}\bm{1}(\bm{z}_{(i),k_{2}}>C_{2})\bm{1}(\bm{x}_{(i),k_{2}}\in I)\right)}\label{eq:MH_ratio-rank}
\end{align}
}
Since there are 0 pre-images for $\theta_{1}$ over $I$, either $\bm{1}(\bm{z}_{i,k_{1}}\leq C_{1})\bm{1}(\bm{x}_{i,k_{1}}\in I)\equiv0$
or $\bm{1}(\bm{z}_{i,k_{1}}>C_{1})\bm{1}(\bm{x}_{i,k_{1}}\in I)\equiv0$;
similarly since there are 0 pre-images for $\theta_{2}$ over $I$,
either $\bm{1}(\bm{z}_{i,k_{2}}\leq C_{2})\bm{1}(\bm{x}_{i,k_{2}}\in I)\equiv0$
or $\bm{1}(\bm{z}_{i,k_{2}}>C_{2})\bm{1}(\bm{x}_{i,k_{2}}\in I)\equiv0$.
In the case $\bm{1}(\bm{z}_{i,k_{1}}\leq C_{1})\bm{1}(\bm{x}_{i,k_{1}}\in I)\equiv0$
and $\bm{1}(\bm{z}_{i,k_{2}}\leq C_{2})\bm{1}(\bm{x}_{i,k_{2}}\in I)\equiv0$,
\ref{eq:MH_ratio-rank} becomes
\begin{align*}
\tau\mid_{I} & =\frac{\exp\left(-\sum_{i=n_{1}+1}^{n}(y_{(i)}-\mu_{R}^{1})^{2}\bm{1}(\bm{z}_{(i),k_{1}}>C_{1})\bm{1}(\bm{x}_{(i),k_{1}}\in I)\right)}{\exp\left(-\sum_{i=n_{2}+1}^{n}(y_{(i)}-\mu_{R}^{2})^{2}\bm{1}(\bm{z}_{(i),k_{2}}>C_{2})\bm{1}(\bm{x}_{(i),k_{2}}\in I)\right)}\\
 & =\frac{\exp\left(-\sum_{i=1}^{n}(y_{(i)}-\mu_{R}^{1})^{2}\right)}{\exp\left(-\sum_{i=1}^{n}(y_{(i)}-\mu_{R}^{2})^{2}\right)}\\
 & =\frac{\exp\left(-\sum_{i=1}^{n}(y_{i}-\mu_{R}^{1})^{2}\right)}{\exp\left(-\sum_{i=1}^{n}(y_{i}-\mu_{R}^{2})^{2}\right)}\\
 & =1\text{ by definition, }\mu_{R}^{1}=\mu_{R}^{2}.
\end{align*}
The case $\bm{1}(\bm{z}_{i,k_{1}}>C_{1})\bm{1}(\bm{x}_{i,k_{1}}\in I)\equiv0$
and $\bm{1}(\bm{z}_{i,k_{2}}>C_{2})\bm{1}(\bm{x}_{i,k_{2}}\in I)\equiv0$
follows the same argument. In the case $\bm{1}(\bm{z}_{i,k_{1}}\leq C_{1})\bm{1}(\bm{x}_{i,k_{1}}\in I)\equiv0$
and $\bm{1}(\bm{z}_{i,k_{2}}>C_{2})\bm{1}(\bm{x}_{i,k_{2}}\in I)\equiv0$
we have 
\begin{align*}
\tau\mid_{I} & =\frac{\exp\left(-\sum_{i=n_{1}+1}^{n}(y_{(i)}-\mu_{R}^{1})^{2}\bm{1}(\bm{z}_{(i),k_{1}}>C_{1})\bm{1}(\bm{x}_{(i),k_{1}}\in I\right)}{\exp\left(-\sum_{i=1}^{n_{2}}(y_{(i)}-\mu_{L}^{2})^{2}\bm{1}(\bm{z}_{(i),k_{2}}\leq C_{2})\bm{1}(\bm{x}_{(i),k_{2}}\in I)\right)}\\
 & =\frac{\exp\left(-\sum_{i=1}^{n}(y_{(i)}-\mu_{R}^{1})^{2}\right)}{\exp\left(-\sum_{i=1}^{n}(y_{(i)}-\mu_{L}^{2})^{2}\right)}\\
 & =\frac{\exp\left(-\sum_{i=1}^{n}(y_{i}-\mu_{R}^{1})^{2}\right)}{\exp\left(-\sum_{i=1}^{n}(y_{i}-\mu_{L}^{2})^{2}\right)}\\
 & =1\text{ by definition, }\mu_{R}^{1}=\mu_{L}^{2}.
\end{align*}
Note that this case we also have all observations allocated to right
and left nodes under two transforms. The case $\bm{1}(\bm{z}_{i,k_{1}}>C_{1})\bm{1}(\bm{x}_{i,k_{1}}\in I)\equiv0$
and $\bm{1}(\bm{z}_{(i),k_{2}}\leq C_{2})\bm{1}(\bm{x}_{i,k_{2}}\in I)\equiv0$
follows the same argument. 
\item $\theta_{1}$ has 1 pre-image of $C_{1}$ on $I$; $\theta_{2}$ has
0 pre-image of $C_{2}$ on $I$. (The discussion of the case: $\theta_{1}$
has 0 pre-image of $C_{1}$ on $I$; $\theta_{2}$ has 1 pre-image
of $C_{2}$ on $I$, is similar. ) First note that $\mu_{L}^{2}=\mu_{R}^{2}$
as $\theta_{2}$ has 0 pre-image but $\mu_{L}^{1}\neq\mu_{R}^{1}$,
yielding that the corresponding principal decision ratios: 
{\footnotesize
\begin{align*}
\tau\mid_{I} & =\frac{\exp\left(-\sum_{i=1}^{n}(y_{(i)}-\mu_{L}^{1})^{2}\bm{1}(\bm{z}_{(i),k_{1}}\leq C_{1})\bm{1}(\bm{x}_{(i),k_{1}}\in I)-\sum_{i=1}^{n}(y_{(i)}-\mu_{R}^{1})^{2}\bm{1}(\bm{z}_{(i),k_{1}}>C_{1})\bm{1}(\bm{x}_{(i),k_{1}}\in I)\right)}{\exp\left(-\sum_{i=1}^{n}(y_{(i)}-\mu_{L}^{2})^{2}\bm{1}(\bm{z}_{(i),k_{2}}\leq C_{2})\bm{1}(\bm{x}_{(i),k_{2}}\in I)\right)},\\
 & \text{ or }\frac{\exp\left(-\sum_{i=1}^{n}(y_{(i)}-\mu_{L}^{1})^{2}\bm{1}(\bm{z}_{(i),k_{1}}\leq C_{1})-\sum_{i=1}^{n}(y_{(i)}-\mu_{R}^{1})^{2}\bm{1}(\bm{z}_{(i),k_{1}}>C_{1})\bm{1}(\bm{x}_{(i),k_{1}}\in I)\right)}{\exp\left(-\sum_{i=1}^{n}(y_{(i)}-\mu_{R}^{2})^{2}\bm{1}(\bm{z}_{(i),k_{2}}>C_{2})\bm{1}(\bm{x}_{(i),k_{2}}\in I)\right)}.
\end{align*}
}
Therefore, the ratio is larger for $\theta_{1}$ by Proposition \ref{prop:For-any-split-risk-decreases}
and we should always prefer $\theta_{1}$ because the fit for $(x,y)$
using two constants $\mu_{L}^{1}\cdot\bm{1}(\bm{z}_{(i),k_{1}}\leq C_{1})$
and $\mu_{R}^{1}\cdot\bm{1}(\bm{z}_{(i),k_{1}}>C_{1})$ defined on
$I$ can not be worse than the fit for $(x,y)$ using one constant
$\mu_{L}^{2}\cdot\bm{1}(\bm{z}_{(i),k_{2}}\leq C_{2})$ (or $\mu_{R}^{2}\cdot\bm{1}(\bm{z}_{(i),k_{2}}>C_{2})$),
as stated in the next Proposition \ref{prop:For-any-split-risk-decreases}. 
\item $\theta_{1}$ has 1 pre-image of $C_{1}$ on $I$; $\theta_{2}$ has
1 pre-image of $C_{2}$ on $I$. Then, on the refined interval $I\in\mathcal{I}_{1\cap2}$,
\ref{eq:MH_ratio-rank} becomes
{\footnotesize
\begin{align*}
\tau\mid_{I} & =\frac{\exp\left(-\sum_{i=1}^{n_{1}}(y_{(i)}-\mu_{L}^{1})^{2}\bm{1}(\bm{x}_{(i),k_{1}}\leq\theta_{1}^{-1}C_{1})\bm{1}(\bm{x}_{(i),k_{1}}\in I)-\sum_{i=n_{1}+1}^{n}(y_{(i)}-\mu_{R}^{1})^{2}\bm{1}(\bm{x}_{(i),k_{1}}>\theta_{1}^{-1}C_{1})\bm{1}(\bm{x}_{(i),k_{1}}\in I)\right)}{\exp\left(-\sum_{i=1}^{n_{2}}(y_{(i)}-\mu_{L}^{2})^{2}\bm{1}(\bm{x}_{(i),k_{2}}\leq\theta_{2}^{-1}C_{2})\bm{1}(\bm{x}_{(i),k_{2}}\in I)-\sum_{i=n_{2}+1}^{n}(y_{(i)}-\mu_{R}^{2})^{2}\bm{1}(\bm{x}_{(i),k_{2}}>\theta_{2}^{-1}C_{2})\bm{1}(\bm{x}_{(i),k_{2}}\in I)\right)}
\end{align*}
}
From Corollary \ref{cor:(Optimal-2-partition-fixed-size}, we know
that both the numerator and denominator of $\tau\mid_{I}$ are fixed
size oracle 2-partitions on $y$. However, we need to decide which
of $\bm{x}_{(i),k_{1}}\leq\theta_{1}^{-1}C_{1}$ and $\bm{x}_{(i),k_{2}}\leq\theta_{2}^{-1}C_{2}$
gives us a larger sum of variances. From Lemma \ref{lem:(Swaps-to-optimize},
we know that it reduces to compare $n-1$ possible values of sum of
variances (corresponding to $n-1$ different split values). 
\end{enumerate}
\end{proof}

\section{\label{sec:Proof-of-Theorem-Oracle}Proof of Theorem \ref{thm:Oracle}}
\begin{proof}
Using Theorem 3 in \citet{cossock2006subset}, we know that for the
full dataset $\mathcal{X}^{(N)}$, the following holds: 
\begin{align}
\bm{T}\left(r_{B}\right)-\bm{T}\left(r_{c,K}\right) & \leq\frac{4}{\sqrt{N}}\left(\sum_{j=1}^{N}\left(f_{B}(\bm{x}_{j})-f_{c,K}(\bm{x}_{j})\right)^{2}\right)^{1/2},\label{eq:cossock_zhang_bound-1}
\end{align}
where $r_{c,K}$ is induced by the $f_{c,K}\in\mathcal{G}\subset\mathcal{G}_0$ constructed
from a CART. This inequality suggests that the a scoring function $f_{c,K}$
approximated by CART of depth $K\geq1$ with low approximating $L_{2}$
error to the Bayes scoring function $f_{B}$ can attain low approximating
ranking error $\bm{T}$ as well.

Assuming that the true signal $f_{B} \in \mathcal{G}_0$ and the CART-induced $f_{c,K} \in \mathcal{G} \subset \mathcal{G}_0$ as in the statement of Theorem \ref{thm:Oracle}, now we apply Theorem 4.3 in \citet{klusowski2021universal}.
We can assert that the CART prediction $f_{c,K}$ constructed from
splitting a complete binary tree using CART loss \eqref{eq:loss.y.ranking}
of depth $K$ satisfy the following universal consistency as: 
\begin{align}
\mathbb{E}_{\left(\mathcal{X}^{(N)},\mathcal{Y}^{(N)}\right)}\left(\left\Vert f_{B}-f_{c,K}\right\Vert ^{2}\right) & \leq2\inf_{g\in\mathcal{G}}\left\{ \left\Vert f_{B}-g\right\Vert ^{2}+\frac{\left\Vert g\right\Vert _{\text{TV}}^{2}}{K+3}+C_{B}\frac{2^{K}\log(Nd)}{N}\right\}, %
\end{align}
where the constant $C_{B}$ depends on the uniform bound on the total
variations along coordinates $f_{B,i}:\mathbb{R}\rightarrow\mathbb{R}$
as assumed. Applying Markov's inequality yields 
\begin{align}
\mathbb{P}_{\left(\mathcal{X}^{(N)},\mathcal{Y}^{(N)}\right)}& \left(\left\Vert f_{B}-f_{c,K}\right\Vert ^{2}>\alpha_{1}\right) \leq\frac{\mathbb{E}_{\left(\mathcal{X}^{(N)},\mathcal{Y}^{(N)}\right)}\left(\left\Vert f_{B}-f_{c,K}\right\Vert ^{2}\right)}{\alpha_{1}}\\
& \leq\frac{2}{\alpha_{1}}\inf_{g\in\mathcal{G}}\left\{ \left\Vert f_{B}-g\right\Vert ^{2}+\frac{\left\Vert g\right\Vert _{\text{TV}}^{2}}{K+3}+C_{B}\frac{2^{K}\log^{2}N\log(Nd)}{N}\right\},  \label{eq:P_sqnorm}\end{align}
for any $\alpha_{1}>0$. This means that the tree-like functions in class $\mathcal{G}$ can approximate the Bayes score function well enough, and bounded from above.  

Since the result in
\eqref{eq:P_sqnorm} considers the exact norm, we need one more step
to connect the exact norm to the empirical norm used in the statement
of Theorem 3 in \citet{cossock2006subset}. To attain this, we invoke classical results 
from empirical process regarding the convergence of empirical norm
\citep{ledoux1991probability}. According to Theorem 2.2 in \citet{van2014uniform}, the empirical norm $\|f\|_{N}^{2}\coloneqq\frac{1}{N}\sum_{i=1}^{N}f(\bm{x}_{i})^{2}$
converges to the exact $\ell^{2}$-norm $\|f\|^{2} = \int f(u)^{2}d\mathbb{P}_X(u)$
at a rate of $\mathcal{O}\left(\frac{1}{\sqrt{N}}\right)$ for any
$f\in\mathcal{G}_0$ from a possibly larger class than tree-like functions $\mathcal{G}$. In particular, we have that the empirical norm of $f\in\mathcal{G}_0$ can be approximated as well
\begin{align}
\mathbb{E}_{\left(\mathcal{X}^{(N)},\mathcal{Y}^{(N)}\right)}\left(\sup_{f\in\mathcal{G}_0}\left|\|f\|_{N}-\|f\|\right|\right) & \leq\frac{2R_{\infty}J_{0}(2R_{2},\mathcal{G}_0)}{\sqrt{N}},
\end{align} 
which implies 
\begin{align}
\mathbb{P}_{\left(\mathcal{X}^{(N)},\mathcal{Y}^{(N)}\right)}\left(\sup_{f\in\mathcal{G}_0}\left|\|f\|_{N}-\|f\|\right|>\alpha_{2}\right) & \leq\frac{\mathbb{E}_{\left(\mathcal{X}^{(N)},\mathcal{Y}^{(N)}\right)}\left(\sup_{f\in\mathcal{G}_0}\left|\|f\|_{N}-\|f\|\right|\right)}{\alpha_{2}}\\
 & \leq\frac{2R_{\infty}J_{0}(2R_{2},\mathcal{G}_0)}{\alpha_{2}\sqrt{N}},\label{eq:P_supnorm}
\end{align}
for any $\alpha_{2}>0$ by applying Markov's inequality. 
Applying the inequality in \eqref{eq:cossock_zhang_bound-1}, we obtain 
\begin{align}
 & \mathbb{P}_{\left(\mathcal{X}^{(N)},\mathcal{Y}^{(N)}\right)}\left(\bm{T}\left(r_{B}\right)-\bm{T}\left(r_{c,K}\right)>\varepsilon_{N}\right)\nonumber \\
 & \leq\mathbb{P}_{\left(\mathcal{X}^{(N)},\mathcal{Y}^{(N)}\right)}\left(\frac{4}{\sqrt{N}}\left(\sum_{j=1}^{N}\left(f_{B}(\bm{x}_{j})-f_{c,K}(\bm{x}_{j})\right)^{2}\right)^{1/2}>\varepsilon_{N}\right)\nonumber \\
 & =\mathbb{P}_{\left(\mathcal{X}^{(N)},\mathcal{Y}^{(N)}\right)}\left(\frac{1}{N}\left(\sum_{j=1}^{N}\left(f_{B}(\bm{x}_{j})-f_{c,K}(\bm{x}_{j})\right)^{2}\right)>\frac{\varepsilon_{N}^{2}}{16}\right)\nonumber \\
 & =\mathbb{P}_{\left(\mathcal{X}^{(N)},\mathcal{Y}^{(N)}\right)}\left(\left\Vert f_{c,K}-f_{B}\right\Vert _{N}>\frac{\varepsilon_{N}^{2}}{16}\text{ and }\left|\left\Vert f_{c,K}-f_{B}\right\Vert _{N}-\left\Vert f_{c,K}-f_{B}\right\Vert \right|\leq\frac{\varepsilon_{N}^{2}}{32}\right)\nonumber \\
 & \quad +\mathbb{P}_{\left(\mathcal{X}^{(N)},\mathcal{Y}^{(N)}\right)}\left(\left\Vert f_{c,K}-f_{B}\right\Vert _{N}>\frac{\varepsilon_{N}^{2}}{16}\text{ and }\left|\left\Vert f_{c,K}-f_{B}\right\Vert _{N}-\left\Vert f_{c,K}-f_{B}\right\Vert \right|>\frac{\varepsilon_{N}^{2}}{32}\right)\nonumber \\
 & =: A_1 + A_2. 
\end{align}
For the first term $A_1$, it represents how well the tree-like function class $\mathcal{G}$ can approximate the Bayes scoring functions. We substitute $\alpha_{1}=\frac{\varepsilon_{N}^{2}}{32}$ into \eqref{eq:P_sqnorm} and bound it by 
\[
\mathbb{P}_{\left(\mathcal{X}^{(N)},\mathcal{Y}^{(N)}\right)}\left(\left\Vert f_{B}-f_{c,K}\right\Vert ^{2}>\left[\frac{\varepsilon_{N}^{2}}{32}\right]^{2}\right)\leq\frac{1024}{\varepsilon_{N}^{4}}\cdot\inf_{g\in\mathcal{G}}\left\{ \left\Vert f_{B}-g\right\Vert ^{2}+\frac{\left\Vert g\right\Vert _{\text{TV}}^{2}}{K+3}+C_{B}\frac{2^{K}\log^{2}N\log(Nd)}{N}\right\}. 
\]
For the second term $A_2$, it represents how fast empirical norm with respect to $\bm{X}$ converges when the Bayesian scoring function is in a potentially larger class $\mathcal{G}_0$ (but the tree approximating scoring function $f_{c,K}\in\mathcal{G}$).  This enclosed event has an intersection component in form of \eqref{eq:P_supnorm} with $\alpha_{2}=\frac{\varepsilon_{N}^{2}}{16}$. Therefore, $A_2 \leq \frac{32R_{\infty}J_{0}(2R_{2},\mathcal{G}_0)}{\varepsilon_{N}^{2}\sqrt{N}}.$ 

Combining these two bounds yields  
\begin{align*}
& \mathbb{P}_{\left(\mathcal{X}^{(N)},\mathcal{Y}^{(N)}\right)}\left(\bm{T}\left(r_{B}\right)-\bm{T}\left(r_{c,K}\right)>\varepsilon_{N}\right) \\ & \leq\frac{1024}{\varepsilon_{N}^{4}}\cdot\inf_{g\in\mathcal{G}}\left\{ \left\Vert f_{B}-g\right\Vert ^{2}+\frac{\left\Vert g\right\Vert _{\text{TV}}^{2}}{K+3}+C_{B}\frac{2^{K}\log^{2}N\log(Nd)}{N}\right\} +\frac{32R_{\infty}J_{0}(2R_{2},\mathcal{G}_0)}{\varepsilon_{N}^{2}\sqrt{N}}.
\end{align*}
This completes the proof of the inequality~\eqref{eq:rank.bound.CART.two.class}. 

When $\mathcal{G}_0=\mathcal{G}$, substituting $g = f_B$ into the upper bound~\eqref{eq:rank.bound.CART.two.class} leads to the simplified upper bound~\eqref{eq:oracle.special.case}. 
\end{proof}

\section{\label{sec:Proof-of-Asymptotics}Proof of Theorem \ref{thm:Asymptotics}}
\begin{proof}
Using Theorem 3 in \citet{cossock2006subset}, we know that for the
full dataset $\mathcal{X}^{(N)}$ we have the bound 
\begin{align}
\bm{T}\left(r_{B}\right)-\bm{T}\left(r_{f}\right) & \leq\frac{4}{\sqrt{N}}\left(\sum_{j=1}^{N}\left(f_{B}(\bm{x}_{j})-f(\bm{x}_{j})\right)^{2}\right)^{1/2}\label{eq:cossock_zhang_bound}
\end{align}
Therefore, it justifies that the a scoring function $f$ with low
approximating $L_{2}$ error to the Bayes scoring function $f_{B}$
can attain low approximating ranking error $\bm{T}$ as well.

\citet{rovckova2019theory} presents the posterior concentration of
BART in their Theorem 7.1 (adapted to our notations above). Namely,
when the $f$ is $\nu$-Holder continuous with $0<\nu\leq1$ and $\|f_{B}\|_{\infty}\apprle\log^{1/2}N$,
assume a regular design $\mathcal{X}^{(N)}=\{\bm{x}_{1},\bm{x}_{2},\cdots,\bm{x}_{N}\}\subset\mathbb{R}^{d},d\lesssim\log^{1/2}N$
for the features. With a fixed number of trees and node $\eta$ splitting
probability $p_{\text{split}}(\eta)=\alpha^{\text{depth}(\eta)},\alpha\in\left[\frac{1}{N},\frac{1}{2}\right)$
proportional to the depth of node $\eta$ with respect to each tree,
we have posterior concentration results from BART, when the regression
aims at approximating the optimal $f_{B}$. Precisely we assert that
the BART posterior measure $\Pi(\cdot\mid y_{1},y_{2},\cdots,y_{N})$
concentrates on all scoring functions $f$ that is measurable with
respect to the product $\sigma$-field $\mathcal{F}$ generated by the joint measure of 
$y_{1},y_{2},\cdots,y_{N}$. That is, when the approximating scoring function
$f$ is approximated by BART, we have 
\begin{align*}
\prod\left(\left.f\in\mathcal{F}:\left\Vert f(\bm{x})-f_{B}(\bm{x})\right\Vert _{N}>M_{N}\varepsilon_{N}\right|y_{1},y_{2},\cdots,y_{N}\right) & \rightarrow0,\text{as }N\rightarrow\infty\text{ for all }\bm{x}\in\mathcal{X}^{(N)}
\end{align*}
for any sequence $M_{N}\rightarrow0$ in the joint probability of $y_{1},y_{2},\cdots,y_{N}$,
as the sample size $N$ and the dimensionality $d\rightarrow\infty$
and $\varepsilon_{N}=N^{-\alpha/(2\alpha+d)}\log^{1/2}N$. Here we
use the empirical norm definition $\|f\|_{N}^{2}\coloneqq\frac{1}{N}\sum_{i=1}^{N}f(\bm{x}_{i})^{2}$
and \eqref{eq:cossock_zhang_bound} in the second line. 
\begin{align*}
 & \prod\left(\left.f\in\mathcal{F}:\bm{T}\left(r_{B}\right)-\bm{T}\left(r_{f}\right)>M_{N}\varepsilon_{N}\right|y_{1},y_{2},\cdots,y_{N}\right)\\
 & \leq\prod\left(\left.f\in\mathcal{F}:\frac{4}{\sqrt{N}}\left(\sum_{j=1}^{N}\left(f_{B}(\bm{x}_{j})-f(\bm{x}_{j})\right)^{2}\right)^{1/2}>M_{N}\varepsilon_{N}\right|y_{1},y_{2},\cdots,y_{N}\right)\\
 & =\prod\left(\left.f\in\mathcal{F}:\frac{1}{N}\left(\sum_{j=1}^{N}\left(f_{B}(\bm{x}_{j})-f(\bm{x}_{j})\right)^{2}\right)>\frac{M_{N}^{2}\varepsilon_{N}^{2}}{16}\right|y_{1},y_{2},\cdots,y_{N}\right)\\
 & =\prod\left(\left.f\in\mathcal{F}:\left\Vert f-f_{B}\right\Vert _{N}>M_{N}^{\sharp}\varepsilon_{N}\right|y_{1},y_{2},\cdots,y_{N}\right)\rightarrow0,\text{as the design size }N\rightarrow\infty.
\end{align*}
where $M_{N}^{\sharp}=\frac{M_{N}^{2}\varepsilon_{N}}{16}$ can be
chosen to be any sequence converging to 0 as $N\rightarrow\infty$.
The key of this argument is to convert the error measured in $\bm{T}$
metric to the empirical norm. 
\end{proof}

\section{\label{sec:Proof-of-LemmaA1}Proof of Lemma \ref{lem:(Swaps-to-optimize}}
\begin{proof}
Either swapping $(y_{\alpha},y_{\gamma})$ or $(y_{\beta},y_{\gamma})$
leaves us with no reversed pairs. Thus, it suffices to note that if
we swap $(y_{\alpha},y_{\gamma})$ the RHS of \eqref{eq:ref3} and
\eqref{eq:ref4} become 
\begin{align*}
-\left(-y_{\gamma}+y_{\alpha}\right)^{2}+(\mu_{2,(\alpha,\gamma)}^{*}-\mu_{1}^{'})^{2} & \geq0,
\end{align*}
where the $\mu_{1}^{'}=\frac{1}{n_{1}}\left(y_{\alpha}+y_{\beta}+\sum_{y\in P_{1},y\neq y_{\alpha},y_{\beta}}y\right)$
and $\mu_{2,(\alpha,\gamma)}^{*}=\frac{1}{n_{2}}\left(y_{\alpha}+\sum_{y\in P_{2},y\neq y_{\gamma}}y\right)$.
Similarly, if we swap $(y_{\beta},y_{\gamma})$ the RHS of \eqref{eq:ref3}
and \eqref{eq:ref4} become 
\begin{align*}
-\left(-y_{\gamma}+y_{\beta}\right)^{2}+(\mu_{2,(\beta,\gamma)}^{*}-\mu_{1}^{'})^{2} & \geq0,
\end{align*}
where the $\mu_{1}^{'}=\frac{1}{n_{1}}\left(y_{\alpha}+y_{\beta}+\sum_{y\in P_{1},y\neq y_{\alpha},y_{\beta}}y\right)$
and $\mu_{2,(\beta,\gamma)}^{*}=\frac{1}{n_{2}}\left(y_{\beta}+\sum_{y\in P_{2},y\neq y_{\gamma}}y\right)$.
It follows that $\mu_{2,(\alpha,\gamma)}^{*}>\mu_{2,(\beta,\gamma)}^{*}>\mu_{1}^{'}$,
and the assumption $y_{\alpha}>y_{\beta}>y_{\gamma}$ that 
\begin{align*}
 & -\left(-y_{\gamma}+y_{\alpha}\right)^{2}+(\mu_{2,(\alpha,\gamma)}^{*}-\mu_{1}^{'})^{2}+\left(-y_{\gamma}+y_{\beta}\right)^{2}-(\mu_{2,(\beta,\gamma)}^{*}-\mu_{1}^{'})^{2}\\
= & \left[\left(-y_{\gamma}+y_{\beta}\right)^{2}-\left(-y_{\gamma}+y_{\alpha}\right)^{2}\right]+\left[(\mu_{2,(\alpha,\gamma)}^{*}-\mu_{1}^{'})^{2}-(\mu_{2,(\beta,\gamma)}^{*}-\mu_{1}^{'})^{2}\right]\geq0.
\end{align*}
This means that the reduction \eqref{eq:ref1}$+$\eqref{eq:ref2}
is larger if we swap $(y_{\alpha},y_{\gamma})$. 
\end{proof}

\section{\label{sec:Proof-of-Proposition-T0}Proof of Proposition \ref{prop:-Conditioned-on}}
\begin{proof}
 The statistics $\mathcal{T}_{0}(g)$ in \eqref{eq:T0_statistics}
can be written as 
\[
\sum_{\pi}\left\{ \bm{1}\left(\bm{x}_{\pi(1)}\geq\bm{x}_{\pi(2)}\right)\cdot\left|y_{\pi(1)}-y_{\pi(2)}\right|\cdot\bm{1}(y_{\pi(1)}<y_{\pi(2)})+\bm{1}\left(\bm{x}_{\pi(1)}<\bm{x}_{\pi(2)}\right)\cdot\left|y_{\pi(1)}-y_{\pi(2)}\right|\cdot\bm{1}(y_{\pi(1)}\geq y_{\pi(2)})\right\} .
\]
Without loss of generality, we next consider one summation involving $\bm{1}\left(f(\bm{x}_{\pi(1)})\geq f(\bm{x}_{\pi(2)})\right)$,
and the argument remains the same for the other summation. 
Now since $\bm{X}_{k},k\in\{1,\cdots,d\}$ is inactive, the distribution
of $y$ is independent of the distribution of $f(\bm{X}_{k})$. Based
on this observation, we can remove the conditioning inside the expectation with respect to $\bm{X}_k$ in the first line: 
\begin{align*}
\mathbb{E}_{\bm{X}_{k},\bm{y}}\mathcal{T}_{0}(g) & =\frac{2}{n(n-1)}\cdot\left[\mathbb{P}_{\bm{X}_{k}}\left(f(\bm{x}_{\pi(1)})\geq f(\bm{x}_{\pi(2)})\right)\right]\cdot\sum_{\pi}\left[\mathbb{E}_{\bm{y}\mid\bm{X}_{k}}\left|y_{\pi(1)}-y_{\pi(2)}\right|\cdot\bm{1}(y_{\pi(1)}<y_{\pi(2)})\right]\\
 & =\frac{2}{n(n-1)}\cdot\left[\mathbb{P}_{\bm{X}_{k}}\left(f(\bm{x}_{\pi(1)})\geq f(\bm{x}_{\pi(2)})\right)\right]\cdot\sum_{\pi}\left[\mathbb{E}_{\bm{y}}\left|y_{\pi(1)}-y_{\pi(2)}\right|\cdot\bm{1}(y_{\pi(1)}<y_{\pi(2)})\right]\\
 & =\left[\mathbb{P}_{\bm{X}_{k}}\left(f(\bm{x}_{\pi(1)})\geq f(\bm{x}_{\pi(2)})\right)\right]\cdot\frac{2}{n(n-1)}\cdot\frac{1}{2}\sum_{\pi}\left[\mathbb{E}_{\bm{y}}\left|y_{\pi(1)}-y_{\pi(2)}\right|\right]\\
 & \asymp O\left(1\right)\cdot\left[\mathbb{P}_{\bm{X}_{k}}\left(f(\bm{x}_{\pi(1)})\geq f(\bm{x}_{\pi(2)})\right)\right]>0,
\end{align*}
where the second-to-last line uses an symmetric argument as 
\[
\sum_{\pi}\left[\mathbb{E}_{\bm{y}}\left|y_{\pi(1)}-y_{\pi(2)}\right|\cdot\bm{1}(y_{\pi(1)}<y_{\pi(2)})\right] = \sum_{\pi}\left[\mathbb{E}_{\bm{y}}\left|y_{\pi(1)}-y_{\pi(2)}\right|\cdot\bm{1}(y_{\pi(1)}> y_{\pi(2)})\right], 
\]
and their sum is $\sum_{\pi}\left[\mathbb{E}_{\bm{y}}\left|y_{\pi(1)}-y_{\pi(2)}\right|\right].$
This
proves part (i).

On the other hand, if we  take the expectation with respect to all $\bm{y}\mid\bm{X}$:
\begin{align}
\mathbb{E}_{\bm{X},\bm{y}}\mathcal{T}_{0}(g) & =\frac{2}{n(n-1)}\cdot\mathbb{E}_{\bm{X},\bm{y}}\sum_{\pi}\bm{1}\left(f(\bm{x}_{\pi(1)})\geq f(\bm{x}_{\pi(2)})\right)\cdot\left|y_{\pi(1)}-y_{\pi(2)}\right|\cdot\bm{1}(y_{\pi(1)}<y_{\pi(2)})\nonumber \\
 & =\frac{2}{n(n-1)}\cdot\mathbb{E}_{\bm{X}}\mathbb{E}_{\bm{y}\mid\bm{X}}\sum_{\pi}\bm{1}\left(f(\bm{x}_{\pi(1)})\geq f(\bm{x}_{\pi(2)})\right)\cdot\left|y_{\pi(1)}-y_{\pi(2)}\right|\cdot\bm{1}(y_{\pi(1)}<y_{\pi(2)})\nonumber \\
 & =\frac{2}{n(n-1)}\cdot\left[\mathbb{E}_{\bm{X}}\bm{1}\left(f(\bm{x}_{\pi(1)})\geq f(\bm{x}_{\pi(2)})\right)\right]\cdot\sum_{\pi}\left[\mathbb{E}_{\bm{y}\mid\bm{X}}\left|y_{\pi(1)}-y_{\pi(2)}\right|\cdot\bm{1}(y_{\pi(1)}<y_{\pi(2)})\right]\nonumber \\
 & =\frac{2}{n(n-1)}\cdot\left[\mathbb{P}_{\bm{X}}\left(f(\bm{x}_{\pi(1)})\geq f(\bm{x}_{\pi(2)})\right)\right]\cdot\sum_{\pi}\left[\mathbb{E}_{\bm{y}\mid\bm{X}}\left|y_{\pi(1)}-y_{\pi(2)}\right|\cdot\bm{1}(y_{\pi(1)}<y_{\pi(2)})\right].\label{eq:inter-step_inactive}
\end{align}
if there exists such a $g$ that $g(\bm{x}_{1})\geq g(\bm{x}_{2})\Leftrightarrow$$y_{1}\geq y_{2}$
then 
\begin{align*}
\eqref{eq:inter-step_inactive} & =\left[\mathbb{P}_{\bm{X}}\left(g(\bm{x}_{\pi(1)})\geq g(\bm{x}_{\pi(2)})\right)\right]\cdot\sum_{\pi}\frac{2}{n(n-1)}\cdot\left[\mathbb{E}_{\bm{y}\mid\bm{X}}\left|y_{\pi(1)}-y_{\pi(2)}\right|\cdot\bm{1}\left(g(\bm{x}_{\pi(1)})<g(\bm{x}_{\pi(2)})\right)\right]\\
 & =\left[\mathbb{P}_{\bm{X}}\left(g(\bm{x}_{\pi(1)})\geq g(\bm{x}_{\pi(2)})\right)\right]\cdot\sum_{\pi}\frac{2}{n(n-1)}\cdot\left[\mathbb{E}_{\bm{y}\mid\bm{X}}\left(y_{\pi(2)}-y_{\pi(1)}\right)\cdot\bm{1}\left(g(\bm{x}_{\pi(1)})<g(\bm{x}_{\pi(2)})\right)\right]\\
 & =\left[\mathbb{P}_{\bm{X}}\left(g(\bm{x}_{\pi(1)})\geq g(\bm{x}_{\pi(2)})\right)\right]\cdot\frac{2}{n(n-1)}\\
 & \times\left\{ \sum_{\pi}\mathbb{E}_{\bm{y}\mid\bm{X}}y_{\pi(2)}\cdot\bm{1}\left(g(\bm{x}_{\pi(1)})<g(\bm{x}_{\pi(2)})\right)-\sum_{\pi}\mathbb{E}_{\bm{y}\mid\bm{X}}y_{\pi(1)}\cdot\bm{1}\left(g(\bm{x}_{\pi(1)})<g(\bm{x}_{\pi(2)})\right)\right\} 
\end{align*}
For the other summation we have equal value $\sum_{\pi}\mathbb{E}_{\bm{y}\mid\bm{X}}y_{\pi(2)}\cdot\bm{1}\left(g(\bm{x}_{\pi(1)})\geq g(\bm{x}_{\pi(2)})\right)-\sum_{\pi}\mathbb{E}_{\bm{y}\mid\bm{X}}y_{\pi(1)}\cdot\bm{1}\left(g(\bm{x}_{\pi(1)})\geq g(\bm{x}_{\pi(2)})\right)$
and cancels out the last row of expressions. This proves part (ii). 
\end{proof}

\end{document}